\title{Can chemotaxis speed up or slow down the spatial spreading in parabolic-elliptic Keller-Segel  systems with logistic source?}
\author{
Rachidi B. Salako\\
Department of Mathematics\\
The Ohio State University\\
Columbus OH, 43210-1174\\
\\
 and \\
 \\
 Wenxian Shen\thanks{Partially supported by the NSF grant DMS--1645673} \,\, and\,\,  Shuwen Xue\\
Department of Mathematics and Statistics\\
Auburn University\\
Auburn University, AL 36849 }
\date{}
\begin{document}

\maketitle

\newtheorem{tm}{Theorem}[section]
\newtheorem{prop}{Proposition}[section]
\newtheorem{defin}{Definition}[section] % definition numbers are dependent on theorem numbers
\newtheorem{coro}{Corollary}[section]
\newtheorem{lem}{Lemma}[section]
\newtheorem{assumption}{Assumption}[section]
\newtheorem{rk}{Remark}[section]
\newtheorem{nota}[tm]{Notation}
\numberwithin{equation}{section}

\newcommand{\stk}[2]{\stackrel{#1}{#2}}
\newcommand{\dwn}[1]{{\scriptstyle #1}\downarrow}
\newcommand{\upa}[1]{{\scriptstyle #1}\uparrow}
\newcommand{\nea}[1]{{\scriptstyle #1}\nearrow}
\newcommand{\sea}[1]{\searrow {\scriptstyle #1}}
\newcommand{\csti}[3]{(#1+1) (#2)^{1/ (#1+1)} (#1)^{- #1
 / (#1+1)} (#3)^{ #1 / (#1 +1)}}
\newcommand{\RR}[1]{\mathbb{#1}}

\newcommand{\rd}{{\mathbb R^d}}
\newcommand{\ep}{\varepsilon}
\newcommand{\rr}{{\mathbb R}}
\newcommand{\alert}[1]{\fbox{#1}}
\newcommand{\eqd}{\sim}
\def\p{\partial}
\def\R{{\mathbb R}}
\def\N{{\mathbb N}}
\def\Q{{\mathbb Q}}
\def\C{{\mathbb C}}
\def\l{{\langle}}
\def\r{\rangle}
\def\t{\tau}
\def\k{\kappa}
\def\a{\alpha}
\def\la{\lambda}
\def\De{\Delta}
\def\de{\delta}
\def\ga{\gamma}
\def\Ga{\Gamma}
\def\ep{\varepsilon}
\def\eps{\varepsilon}
\def\si{\sigma}
\def\Re {{\rm Re}\,}
\def\Im {{\rm Im}\,}
\def\E{{\mathbb E}}
\def\P{{\mathbb P}}
\def\Z{{\mathbb Z}}
\def\D{{\mathbb D}}
\newcommand{\ceil}[1]{\lceil{#1}\rceil}

\begin{abstract}
The current paper is concerned with the spatial spreading speed and minimal wave speed  of the
 following Keller-Segel chemoattraction system,
\begin{equation}\label{abstract-eq1}
\begin{cases}
u_t=u_{xx}-\chi(uv_x)_x +u(a-bu),\quad x\in\R\cr
0=v_{xx}- \lambda v+\mu u,\quad x\in\R,
\end{cases}
\end{equation}
where  $\chi$, $a$, $b$, $\lambda$, and $\mu$ are positive constants.
 Assume $b>\chi\mu$. Then if in addition, $\big(1+\frac{1}{2}\frac{(\sqrt{a}-\sqrt{\lambda})_+}{(\sqrt{a}+\sqrt{\la})}\big)\chi\mu { \leq} b$ holds, it is proved that $c_0^*=2\sqrt a$ is the spreading speed of the solutions of  \eqref{abstract-eq1} with nonnegative continuous initial function $u_0$ with nonempty compact support, that is,
 $$
 \limsup_{|x|\ge ct, t\to\infty}u(t,x;u_0)=0\quad \forall\, c>c_0^*
 $$
 and
 $$
 \liminf_{|x|\le ct,t\to\infty} u(t,x;u_0)>0\quad \forall \, 0<c<c_0^*,
 $$
 where $(u(t,x;u_0),v(t,x;u_0))$ is the unique global classical solution of \eqref{abstract-eq1} with $u(0,x;u_0)=u_0(x)$.
 It is also proved that,  if  $b>2\chi\mu$  and $\lambda \geq a$ holds, then  $c_0^*=2\sqrt a$ is the minimal speed  of the  traveling wave solutions of \eqref{abstract-eq1}  connecting $(0,0)$ and $(\frac{a}{b},\frac{\mu}{\lambda}\frac{a}{b})$, that is,
  for any $c\ge c_0^*$, \eqref{abstract-eq1}
 has a  traveling wave solution  connecting $(0,0)$ and $(\frac{a}{b},\frac{\mu}{\lambda}\frac{a}{b})$ with speed $c$,
 and \eqref{abstract-eq1} has no such traveling wave solutions with speed less than $c_0^*$. Note that $c_0^*=2\sqrt a$ is the spatial spreading speed as well as the minimal wave speed of the following Fisher-KPP equation,
 \begin{equation}
 \label{abstract-eq2}
 u_t=u_{xx}+u(a-bu),\quad x\in\R.
 \end{equation}
 Hence, if $\lambda \geq a$ and $b>\chi\mu$, or $\lambda<a$ and  $b\geq \big(1+\frac{1}{2}\frac{(\sqrt{a}-\sqrt{\lambda})}{(\sqrt{a}+\sqrt{\la})}\big)\chi\mu$   then the chemotaxis neither speeds up nor slows down the spatial spreading in \eqref{abstract-eq1}.

\end{abstract}

\medskip
\noindent{\bf Key words.} Parabolic-elliptic chemotaxis system, logistic source, classical solution, spreading speeds, traveling waves.

\medskip
\noindent {\bf 2010 Mathematics Subject Classification.} 35B35, 35B40, 35K57, 35Q92, 92C17.

\section{Introduction}

This work is concerned with the propagation speeds of solutions in the attraction Keller-Segel chemotaxis models of the form
\begin{equation}\label{Keller-Segel-eq0}
\begin{cases}
u_t=\Delta u-\nabla \cdot (\chi u \nabla v)+u(a-bu),\quad x\in\R^N\cr
0 =\Delta v-  \lambda v +\mu u,\quad x\in\R^N,
\end{cases}
\end{equation}
where $a, b, \lambda,\mu$ and $\chi>0$ are positive constants,  and $u(t,x)$ and $v(t,x)$ represent the densities of the
 mobile species and the chemo-attractant, respectively.  Biologically, the positive constant $\chi$ measures the sensitivity effect on the mobile species by  the chemical substance which is produced overtime by the mobile species;   the reaction $u(a-bu)$ in the first  equation of \eqref{Keller-Segel-eq0} describes the local dynamics of the mobile species; $\lambda$  represents the degradation rate of the  chemo-attractant;  and $\mu$ is the rate at which the mobile species produces the chemo-attractant.

  System \eqref{Keller-Segel-eq0} is a simplified version of the chemotaxis system proposed by Keller and Segel in their works \cite{KeSe1,KeSe2}.
  Chemotaxis models describe the oriented movements of biological cells and organisms in response to chemical gradient which they may produce themselves  over time. These mathematical models play very important roles in a wide range of biological phenomena and accordingly a considerable literature is concerned with its mathematical analysis.  The reader is referred to \cite{HiPa, Hor}  for some detailed introduction into the mathematics of Keller-Segel models.

{  A famous application of chemotaxis models is to describe the life cycle of Dictyostelium discoideum. As described in \cite{AM_AK_LE},  D. discoideum lives in the soil and feeds on bacteria and other microorganisms that are taken up by phagocytosis. During the vegetative growth stage, the single-celled amoebae divide by simple mitotic divisions. In times of starvation, a developmental program is initiated, which is accompanied by major changes
in gene expression. As a result, cells
begin to signal each other by secreting cAMP and to aggregate by
chemotaxis toward this chemoattractant. The resulting multicellular aggregate contains up to a few hundred thousand cells and undergoes further differentiation and morphogenetic changes. Finally a fruiting body is formed which consists of two main cell types, spore and stalk cells. The stalk consists of dead vacuolated cells, while the spore cells are resistant to
extreme temperatures or drought. More favorable environmental conditions enable the hatching of new amoebae from the spores. The aggregation of thousands of individual cells that build a multicellular organism in this peculiar life cycle, has intrigued scientists for decades.

}

  The study of the dynamics of solutions to \eqref{Keller-Segel-eq0} has attracted a number of researchers over the past few years.  Finite time blow-up phenomena is among important dynamical issues about \eqref{Keller-Segel-eq0}.  This phenomena has been studied in many  papers in the case $a=b=0$ (see \cite{HiPa,  Dirk and Winkler, KKAS, kuto_PHYSD, NAGAI_SENBA_YOSHIDA,  win_jde, win_JMAA_veryweak, win_arxiv}).   It is shown that finite time blow-up  may occur if either $N=2$ and the total initial population mass is large enough, or $N\geq 3$. It is also shown that some radial solutions to \eqref{Keller-Segel-eq0} in plane collapse into a persistent Dirac-type singularity in the sense that a globally defined measure-valued solution exists which has a singular part beyond some finite time and asymptotically approaches a Dirac measure (see \cite{LSV1,TeWi1}). We refer the reader to \cite{BBTW,HoSt} and the references therein for more insights in the studies of chemotaxis models.

When the constant $a$ and $b$ are positive, the finite time blow-up phenomena in \eqref{Keller-Segel-eq0} may be suppressed  to some  extent. In fact in this case, it is known that when the space dimension is equal to one or two, solutions to \eqref{Keller-Segel-eq0}  with initial functions in a space of certain integrable functions are defined for all time. And it is enough for the self limitation coefficient $b$ to be big enough comparing  to the chemotaxis sensitivity coefficient to prevent finite time blow-up, see \cite{ITBWS16,SaSh1,TeWi2}.

 Spatial spreading dynamics  is another important dynamical issue about  \eqref{Keller-Segel-eq0}.
Observe that, when $\chi=0$, the chemotaxis system \eqref{Keller-Segel-eq0} reduces to
\begin{equation}\label{fisher-kpp}
u_t=\Delta u+u(a-bu), \quad x\in\R^N.
\end{equation}
Due to the pioneering works of Fisher
\cite{Fisher} and Kolmogorov, Petrowsky, Piskunov \cite{KPP} on traveling wave solutions and take-over properties of \eqref{fisher-kpp},
\eqref{fisher-kpp} is also referred to as  the Fisher-KPP equation.
 The following results are well known about the spatial spreading dynamics of \eqref{fisher-kpp}.
Equation \eqref{fisher-kpp} has  traveling wave solutions $u(t,x)=\phi(x\cdot\xi-ct)$ ($\xi\in S^{N-1}$)
connecting $\frac{a}{b}$ and $0$ $(\phi(-\infty)=\frac{a}{b},\phi(\infty)=0)$ of all speeds $c\geq 2\sqrt a$ and
has no such traveling wave solutions of slower
speed.  For any
nonnegative solution $u(t,x)$ of (\ref{fisher-kpp}), if at
time $t=0$, $u(0,x)=u_0(x\cdot\xi)$ $(\xi\in S^{N-1})$ is $\frac{a}{b}$ for $x\cdot\xi$ near $-\infty$ and $0$ for $x\cdot\xi$ near $ \infty$, then
$$\limsup_{x\cdot\xi \ge ct, t\to \infty}u(t,x)=0 \quad \forall \, c>2\sqrt a
$$
and
$$\limsup_{x\cdot\xi \le ct, t\to \infty}|u(t,x)-\frac{a}{b}|=0\quad \forall\,  c<2\sqrt a.
$$
In
literature, $c^*_0=2\sqrt a$ is   called the {\it
spreading speed} for \eqref{fisher-kpp}.  Since the pioneering works by  Fisher \cite{Fisher} and Kolmogorov, Petrowsky,
Piscunov \cite{KPP},  a huge amount of research has been carried out toward the front propagation dynamics of
  reaction diffusion equations of the form,
\begin{equation}
\label{general-fisher-eq}
u_t=\Delta u+u f(t,x,u),\quad x\in\R^N,
\end{equation}
where $f(t,x,u)<0$ for $u\gg 1$,  $\partial_u f(t,x,u)<0$ for $u\ge 0$ (see \cite{ArWe2, BHN,  BeHaNa1, BeHaNa2, Henri1, Fre, FrGa, LiZh, LiZh1, Nad, NoRuXi, NoXi1, She1, She2, Wei1, Wei2, Zla}, etc.).

 Recently, the first two authors of the current paper studied the spatial spreading dynamics of \eqref{Keller-Segel-eq0}
 and obtained several fundamental results. Some lower and upper bounds for the propagation speeds of solutions with compactly supported initial functions were derived, and some lower bound for the speeds of traveling wave solutions was also derived. It is proved that all these bounds converge to the spreading speed $c_0^*=2\sqrt a$ of \eqref{fisher-kpp} as $\chi\to 0$  (see \cite{SaSh3}, \cite{SaSh2}, \cite{SaSh1}).
 The reader is also referred to  \cite{FhCh} for the lower and upper bounds of propagation speeds of \eqref{Keller-Segel-eq0},
  and is referred to 
\cite{AiHuWa, AiWa, FuMiTs, HoSt, LiLiWa, MaNoSh,
NaPeRy,Wan}, etc.,  for the  studies on traveling
wave solutions of various types of chemotaxis models.

  However, several important biological and mathematical problems remain open. For example,
whether the presence of the chemical substance in \eqref{Keller-Segel-eq0}  slows down or speeds up the propagation of mobile species,
 and whether  there is a minimal wave speed of \eqref{Keller-Segel-eq0}.
  It is the aim of the current paper to provide answers to these questions for some range of the parameters $a,b,\lambda,\mu$ and $\chi$. We remark that,  to study the spatial spreading speeds and traveling wave solutions of \eqref{Keller-Segel-eq0} along some direction
  $\xi\in\ S^{N-1}$ (i.e. study solutions of the form $u(t,x)=\tilde u(t,x\cdot\xi)$), it suffices to study these issues for \eqref{Keller-Segel-eq0} with $N=1$, that is,
  \begin{equation}\label{Keller-Segel-eq}
\begin{cases}
u_t=u_{xx}-\chi(uv_x)_x +u(a-bu),\quad x\in\R\cr
0=v_{xx}- \lambda v+\mu u,\quad x\in\R.
\end{cases}
\end{equation}
 In the rest of this introduction, we state the main results on the spatial spreading dynamics of \eqref{Keller-Segel-eq}.

\subsection{Statement of  the  main results.}

In this subsection, we state the main results of the paper. In order to do so, we first introduce some notations and definitions. Let
$$
C^b_{\rm unif}(\R)=\{ u \, |\, \R\to \R  \ u\ \text{is uniformly continuous and bounded}\}.
$$
For every $u\in C^b_{\rm unif}(\R)$ we let $\|u\|_{\infty}:=\sup_{x\in\R}|u(x)|$. For each given  $u_0\in C^b_{\rm unif}(\R)$  with $u_0(x)\geq 0$, we denote by $(u(t,x;u_0),v(t,x;u_0))$ the classical solution of \eqref{Keller-Segel-eq} satisfying $u(0,x;u_0)=u_0(x)$ for every $x\in\R$. Note that, by comparison principle for parabolic equations, for every  nonnegative initial function $u_0\in C^b_{\rm unif}(\R)$, it always holds that $u(t,x;u_0)\geq 0$ and $v(t,x;u_0)\geq 0$ whenever $(u(t,x;u_0),v(t,x;u_0))$ is defined. In this work we shall only focus on nonnegative classical solutions of \eqref{Keller-Segel-eq}  since both functions $u(t,x)$ and $v(t,x)$ represent density functions. We recall the following result proved in \cite{SaSh1}.

\begin{prop}
For every nonnegative initial function $u_0\in C^b_{\rm unif}(\R)$, there is a unique maximal time $T_{max}$, such that $(u(t,x;u_0),v(t,x;u_0))$ is defined for every $x\in\R$ and $0\le t<T_{\max}$ (\cite[Theorem 1.1]{SaSh1}). Moreover if $\chi\mu<b$ then $T_{max}=\infty$ (\cite[Theorem 1.5]{SaSh1}).
\end{prop}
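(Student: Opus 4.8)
The plan is to regard the elliptic equation as prescribing $v$ as a nonlocal function of $u$, and then to read the parabolic equation as a single semilinear evolution equation on $X:=C^b_{\rm unif}(\R)$. First I would solve the elliptic equation explicitly: for each fixed $t$, $v(t,\cdot)=\mu(\lambda I-\partial_{xx})^{-1}u(t,\cdot)=\mu\,G_\lambda * u(t,\cdot)$ with $G_\lambda(x)=\frac{1}{2\sqrt\lambda}e^{-\sqrt\lambda|x|}$. Since $G_\lambda$ and $G_\lambda'$ lie in $L^1(\R)$, this gives $v\ge 0$ whenever $u\ge 0$, together with the bounds $\|v(t,\cdot)\|_\infty\le\frac{\mu}{\lambda}\|u(t,\cdot)\|_\infty$ and $\|v_x(t,\cdot)\|_\infty\le\frac{\mu}{\sqrt\lambda}\|u(t,\cdot)\|_\infty$. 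Substituting $v_{xx}=\lambda v-\mu u$ into $(uv_x)_x=u_xv_x+uv_{xx}$ recasts the first equation in non-divergence form
\[
u_t=u_{xx}-\chi u_xv_x+u\big(a-\chi\lambda v-(b-\chi\mu)u\big),
\]
in which the quadratic coefficient is $-(b-\chi\mu)$, exactly the quantity the hypothesis $\chi\mu<b$ renders negative.

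For the local theory I would work in the fractional power spaces $X^\alpha=D((I-\partial_{xx})^\alpha)$ associated with the analytic semigroup generated by $\partial_{xx}$ on $X$, choosing $\tfrac12<\alpha<1$ so that $X^\alpha\hookrightarrow C^1_{\rm unif}(\R)$. Then, for $u\in X^\alpha$, both $u_x$ and $v_x=\mu G_\lambda'*u$ are bounded, so the nonlinearity $N(u):=-\chi u_xv_x+u(a-\chi\lambda v-(b-\chi\mu)u)$ maps $X^\alpha$ into $X$ and is locally Lipschitz there. The abstract theory of semilinear parabolic equations (a Banach fixed point for the variation-of-constants formula) then yields a unique maximal mild solution $u\in C([0,T_{\max});X^\alpha)$, with the standard blow-up alternative: either $T_{\max}=\infty$, or $\limsup_{t\uparrow T_{\max}}\|u(t)\|_{X^\alpha}=\infty$. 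Nonnegativity of $u$, and hence of $v$, follows from the parabolic maximum principle.

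The crux of the global statement is a uniform-in-time $L^\infty$ bound on $u$ when $\chi\mu<b$. Because $v\ge 0$, one has $a-\chi\lambda v-(b-\chi\mu)u\le a-(b-\chi\mu)u$ for $u\ge 0$, so $u$ is a subsolution of the linear-drift logistic equation
\[
w_t=w_{xx}-\chi v_x(t,x)\,w_x+w\big(a-(b-\chi\mu)w\big),
\]
whose drift coefficient $v_x$, built from the actual solution, is bounded on each time slice. The constant $\bar M:=\max\{\|u_0\|_\infty,\tfrac{a}{b-\chi\mu}\}$ is a supersolution of this same equation, since its reaction term is nonpositive. The comparison principle for bounded solutions of linear parabolic equations on $\R$ then yields $0\le u(t,x)\le\bar M$ for all $x\in\R$ and $t\in[0,T_{\max})$. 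To close the argument I would bootstrap this $L^\infty$ bound to a uniform $X^\alpha$ bound: inserting the $L^\infty$ control of $u$, $v$, and $v_x$ into the variation-of-constants formula and using the smoothing estimate $\|(I-\partial_{xx})^\alpha e^{t\partial_{xx}}\|_{\mathcal L(X)}\le C\,t^{-\alpha}$ (valid on bounded time intervals) shows $\sup_{t<T_{\max}}\|u(t)\|_{X^\alpha}<\infty$, which by the blow-up alternative forces $T_{\max}=\infty$.

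The step I expect to be most delicate is the derivative-product nonlinearity $u_xv_x$. It is what rules out a naive fixed point directly in $C^b_{\rm unif}(\R)$ and compels the fractional-power-space framework, and it is also the term whose estimation—exploiting the smoothing of both $e^{t\partial_{xx}}$ and the convolution kernel $G_\lambda'$—must be handled with care when upgrading the $L^\infty$ bound to the $X^\alpha$ bound that closes the continuation argument. A secondary point is justifying the comparison principle on the unbounded line; this is legitimate here because all the functions involved are bounded and uniformly continuous, so one may invoke the maximum principle for bounded solutions rather than appeal to a bounded domain or to spatial decay.
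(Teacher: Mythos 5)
First, note that the paper does not actually prove this proposition: it is imported verbatim from \cite{SaSh1} (Theorems 1.1 and 1.5), so the real comparison is with that reference. Your global-existence mechanism is exactly the one used there, and it is reused silently throughout the present paper: rewrite $(uv_x)_x$ via $v_{xx}=\lambda v-\mu u$ to get $u_t=u_{xx}-\chi u_xv_x+u\big(a-\chi\lambda v-(b-\chi\mu)u\big)$, discard $-\chi\lambda uv\le 0$ using $v\ge 0$, and compare with the constant supersolution $\max\{\|u_0\|_\infty,\tfrac{a}{b-\chi\mu}\}$; this is precisely the bound $M$ invoked without comment at the start of the proof of Lemma \ref{new-lm2}. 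Where you genuinely diverge is the local theory. The reference does not use fractional power spaces: it runs the contraction argument on the Duhamel formula directly in $C([0,T];C^b_{\rm unif})$, keeping the chemotaxis term in divergence form so that the derivative falls on the heat semigroup, via $\|\partial_x e^{t\partial_{xx}}w\|_\infty\le Ct^{-1/2}\|w\|_\infty$. So your remark that the product $u_xv_x$ ``rules out'' a fixed point in $C^b_{\rm unif}(\R)$ and ``compels'' the $X^\alpha$ framework is not accurate --- exploiting the divergence structure is exactly how one avoids it. That alternative also buys something concrete: the local existence time then depends only on $\|u_0\|_\infty$, so the uniform $L^\infty$ bound lets one extend the solution by simply iterating the local result, with no need for your final bootstrap.

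Within your own route there are two technical points you should repair. (a) The proposition assumes only $u_0\in C^b_{\rm unif}(\R)$, which need not lie in $X^\alpha$ for $\alpha>\tfrac12$ (it need not be $C^1$); a solution in $C([0,T_{\max});X^\alpha)$ with $u(0)=u_0$ is then impossible. The standard fix is to do the fixed point in a weighted space, e.g.\ with norm $\sup_{0<t\le T}t^\alpha\|u(t)\|_{X^\alpha}+\sup_{0\le t\le T}\|u(t)\|_X$, using $\|e^{t\partial_{xx}}u_0\|_{X^\alpha}\le Ct^{-\alpha}\|u_0\|_X$; this works since $\alpha<1$. (b) In the closing bootstrap, the $L^\infty$ bounds alone do not control $\|N(u(s))\|_X$, because $N$ contains $u_x$; you only get $\|N(u(s))\|_X\le C_1\|u(s)\|_{X^\alpha}+C_2$ with constants determined by the $L^\infty$ bounds on $u,v,v_x$. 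The variation-of-constants estimate therefore closes through the singular Gronwall inequality (linear, with kernel $(t-s)^{-\alpha}$), not by ``inserting'' the $L^\infty$ control directly. Both repairs are standard, so your proposal is correct in substance once they are made.
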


To state our main result on the spreading speeds of solutions of \eqref{Keller-Segel-eq} with nonempty and compact supported initial functions, we first introduce the concept of spreading speeds.

 Suppose that $b>\chi\mu$. Let
$$
C_c^+(\R)=\{u\in C_{\rm unif}^b(\R)\,|\, u(x)\ge 0,\,\, {\rm supp}(u)\,\,\, \text{is non-empty and compact}\}.
$$
Let
$$
C_{-}^*=\{c^*_->0\,|\,
  \liminf_{t\to\infty} \inf_{|x|\le ct}  u(x,t;u_0)>0\quad \forall\,\, u_0\in C_c^+(\R),\,\, \forall\,  0<c<c_{-}^*\}
 $$
 and
 $$
 C_+^*=\{c ^*_+>0\,|\,
 \lim_{t\to\infty}\sup _{|x|\ge ct}   u(x,t;u_0)=0\quad \forall\,\, u_0\in C_c^+(\R),\,\, \forall\, c>c_{+}^*\}.
 $$
 Let
 $$
 c_{-}^*(\chi,a,b,\lambda,\mu)=\sup\{c\in C_-^*\}\quad {\rm and}\quad c_+^*(\chi,a,b,\lambda,\mu)=\inf\{c\in C_+^*\},
 $$
 where $c_-^*(\chi,a,b,\lambda,\mu)=0$ if $C_-^*=\emptyset$ and $c_+^*(\chi,a,b,\lambda,\mu)=\infty$ if $C_+^*=\emptyset$.
  It is clear that
$$
0\le c_-^*(\chi,a,b,\lambda,\mu)\le c_+^*(\chi,a,b,\lambda,\mu)\le\infty.
$$
 Thanks to the feature of $c_-^*:=c_-^*(\chi,a,b,\lambda,\mu)$ and $c_+^*:=c_+^*(\chi,a,b,\lambda,\mu)$, we call the interval $[c_-^*,c_+^*]$
the {\it spreading speed interval}   of solutions of  \eqref{Keller-Segel-eq}   with compactly supported initials.

\smallskip

 Let {\bf (H)} be the following standing assumption.

\smallskip

\noindent {\bf (H)}  $\big(1+\frac{1}{2}\frac{(\sqrt{a}-\sqrt{\lambda})_+}{(\sqrt{a}+\sqrt{\la})}\big)\chi\mu \leq b$.

\smallskip

Let $$
a^{*}=\max\Big\{\kappa\,\big |\, 0<\kappa\leq \sqrt{a},  \,\, \frac{(\kappa-\sqrt{\lambda})_{+}}{(\kappa +\sqrt{\lambda})}\leq \frac{2(b-\chi\mu)}{\chi\mu} \Big\}.
$$
and
$$
c^*=\frac{a+(a^*)^2}{a^*},\quad c_0^*=2\sqrt a.
$$
Observe that
$$
c^*\ge c_0^*,
$$
and that, if {\bf(H)} holds, then
$$
a^*= \sqrt a
$$
and hence
$$
c^*=2\sqrt a=c_0^*.
$$

We prove the following theorem on the upper and lower bounds of the spreading speed interval of \eqref{Keller-Segel-eq}.

\begin{tm}\label{spreading-speed-upper-bound}
Suppose that $0<\chi\mu<b$.   Then
\begin{itemize}
\item[(1)]
$$
c_+^*(\chi,a,b,\lambda,\mu)\le c^*.$$
In particular,  if {\bf (H)} holds, then
$$
 c_+^*(\chi,a,b,\lambda,\mu)\le c_0^*(=2\sqrt a).
 $$

\item[(2)]
\begin{equation}\label{T1-eq1}
c_-^*(\chi,a,b,\lambda,\mu)\ge c_0^*(=2\sqrt a).
\end{equation}
Moreover, if $ 2\chi\mu<b$, then
\begin{equation}\label{T1-eq2}
\lim_{t\to\infty}\sup_{|x|\leq  ct}|u(t,x;u_0)-\frac{a}{b}|=0\quad \forall\, c<c_0^*(=2\sqrt a).
\end{equation}
\end{itemize}
 \end{tm}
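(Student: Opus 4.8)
The plan is to remove the elliptic unknown and reduce \eqref{Keller-Segel-eq} to a single nonlocal parabolic equation for $u$. Using $v_{xx}=\lambda v-\mu u$ in $-\chi(uv_x)_x=-\chi u_xv_x-\chi uv_{xx}$, one rewrites the first equation as
\[
u_t=u_{xx}-\chi v_x u_x+u\big(a-\chi\lambda v-(b-\chi\mu)u\big),
\]
where $v$ is recovered from $u$ through $v(t,x)=\frac{\mu}{2\sqrt{\lambda}}\int_{\R}e^{-\sqrt{\lambda}|x-y|}u(t,y)\,dy$. From this formula I read off the structural estimates $0\le v\le\frac{\mu}{\lambda}\|u(t)\|_\infty$ and $|v_x|\le\sqrt{\lambda}\,v$, together with the sharper one-sided identity
\[
\chi\kappa v_x-\chi\lambda v=\frac{\chi\mu}{2}\Big[-(\kappa+\sqrt{\lambda})\int_{-\infty}^{x}e^{-\sqrt{\lambda}(x-y)}u\,dy+(\kappa-\sqrt{\lambda})\int_{x}^{\infty}e^{-\sqrt{\lambda}(y-x)}u\,dy\Big],
\]
which is the engine behind the constant $a^*$. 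Since $(b-\chi\mu)>0$ and $v\ge0$, the reduced equation is, for fixed $(v,v_x)$, a scalar parabolic equation to which the comparison principle applies; the coupling is then handled by feeding these bounds back through $v=v[u]$ in a continuation (bootstrap) argument.

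For part (1) I would construct, for each admissible decay rate $\kappa$, the super-solution $\bar u(t,x)=\min\{M,\,e^{-\kappa(x-ct)}\}$ (together with its mirror image for the left front, the equation being invariant under $x\mapsto-x$), with $M$ an a priori bound for $\|u\|_\infty$ (so $M\ge\frac{a}{b-\chi\mu}$ suffices) and $c=\kappa+\frac a\kappa$. In the region $\bar u=M$ the inequality reduces to $a-(b-\chi\mu)M\le0$, and at the corner concavity only helps; in the leading-edge region $\bar u=e^{-\kappa(x-ct)}$, substituting $\bar u$ and discarding the manifestly negative terms reduces the super-solution inequality to $c\kappa\ge\kappa^2+a+\big(\chi\kappa v_x-\chi\lambda v\big)$. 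Using $u\le\bar u$ in the displayed identity and dropping the negative left integral bounds the bracket by $\frac{\chi\mu(\kappa-\sqrt{\lambda})_+}{2(\kappa+\sqrt{\lambda})}\,\bar u$, so that $c=\kappa+\frac a\kappa$ works precisely when $\frac{(\kappa-\sqrt{\lambda})_+}{\kappa+\sqrt{\lambda}}\le\frac{2(b-\chi\mu)}{\chi\mu}$. This is exactly the admissibility condition in the definition of $a^*$; since $\kappa\mapsto\kappa+\frac a\kappa$ is decreasing on $(0,\sqrt a]$, optimizing over admissible $\kappa$ gives $c_+^*\le a^*+\frac{a}{a^*}=c^*$, and under {\bf (H)} one has $a^*=\sqrt a$, whence $c_+^*\le 2\sqrt a$.

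For part (2) I would argue by sub-solutions in the moving frame. The lower bound $c_-^*\ge 2\sqrt a$ rests on the fact that far ahead of the bulk $u$, and hence (by the convolution with its exponentially decaying kernel) $v$ and $v_x$, are small, so the effective growth rate there is $a$; fixing $c<2\sqrt a$ and $\varepsilon>0$ with $c<2\sqrt{a-\varepsilon}$, I would build a hump, compactly supported in $z=x-ct$ and of small amplitude, on which the indefinite drift $-\chi v_xu_x$ and the quadratic losses are dominated, so that the KPP machinery forces $\liminf_{t\to\infty}\inf_{|x|\le ct}u>0$. To upgrade this to the convergence \eqref{T1-eq2} under $2\chi\mu<b$, I would squeeze $u$ between sub- and super-solutions that both relax to the homogeneous state $\frac ab$: the threshold $2\chi\mu<b$ guarantees that $\frac ab$ (with $v=\frac{\mu}{\lambda}\frac ab$) is asymptotically stable for the reduced dynamics---the chemotactic contribution being then dominated by the logistic damping $b-\chi\mu$---so the squeeze closes on every ball $|x|\le ct$ with $c<2\sqrt a$.

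The main obstacle, in both parts, is that \eqref{Keller-Segel-eq} has no comparison principle as a coupled system and the nonlocal drift $-\chi u_xv_x$ carries no definite sign. My plan meets this by freezing $(v,v_x)$ as coefficients and controlling them through $u\le\bar u$ in a bootstrap, the decisive point being the combined estimate of $\chi\kappa v_x-\chi\lambda v$ above: it is the competition between the decay rate $\kappa$ of $u$ and the intrinsic rate $\sqrt{\lambda}$ of $v$ that produces the cutoff $(\kappa-\sqrt{\lambda})_+$ and hence the entire gap between $c^*$ and $c_0^*$. I expect the genuinely delicate step to be the lower-bound sub-solution, where the drift must be absorbed uniformly near the front and, for \eqref{T1-eq2}, the stability of $\frac ab$ must be quantified, rather than the upper bound, whose super-solution verification is essentially the one-line estimate displayed above.
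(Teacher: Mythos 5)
Your part (1) is correct and is essentially the paper's own proof. The one-sided estimate you display, with the cutoff $(\kappa-\sqrt{\lambda})_+/(\kappa+\sqrt{\lambda})$, is exactly the paper's Lemma~\ref{lem-001} (inequality \eqref{estimate-on-space-derivative-2}), and your super-solution $\min\{M,e^{-\kappa(x-ct)}\}$ with $c=\kappa+\frac{a}{\kappa}$, optimized over admissible $\kappa$, is exactly Lemma~\ref{lem-001-1}; the paper implements your ``bootstrap'' rigorously via a Schauder fixed point in the moving frame (the map $u\mapsto\Phi(\cdot,\cdot;u)$ on $\mathcal{E}^T$, whose fixed point is the actual solution by uniqueness), which is a clean substitute for a first-touching-time continuation. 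One slip to repair: you cannot ``discard'' the term $-(b-\chi\mu)\bar u$ in the leading-edge verification, since when $\kappa>\sqrt{\lambda}$ the bracket $\chi\kappa v_x-\chi\lambda v$ is genuinely positive (of size $\frac{\chi\mu(\kappa-\sqrt{\lambda})_+}{2(\kappa+\sqrt{\lambda})}\bar u$) and it is precisely the quadratic damping term that absorbs it; this is the whole reason $b-\chi\mu$ enters the admissibility condition.

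The genuine gap is in the lower bound \eqref{T1-eq1}. Your sub-solution hump must live in $\{|x|\le ct\}$ with $c$ arbitrarily close to $2\sqrt a$, and that region is \emph{not} ``far ahead of the bulk'': between the set where $u$ is order one and the upper front $|x|\approx c_\kappa t$ furnished by part (1), the solution may be $O(1)$ at some points and tiny at others, so on the hump you have no better bound than $|\chi v_x|\le \chi\mu\lambda^{-1/2}\|u\|_\infty=O(1)$. An $O(1)$ adverse drift cannot be dominated by a small-amplitude compactly supported KPP hump: the drift term enters at the same order (the amplitude) as the growth $a\underline u$, and a hump of speed $c$ tolerates only a drift of size $\lesssim 2\sqrt a-c$. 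What is needed, and what the paper supplies following \cite{FhCh}, is a \emph{pointwise} estimate converting smallness of $u$ at a point into smallness of the drift at that same point, namely \eqref{E2-1}: $|\chi v_x(t,x)|+\chi\lambda v(t,x)\le C_{R,p}(u(t,x))^{1/p}+\varepsilon_R M$ for $t\ge 1$. Your convolution remark yields only the weaker implication ``$u$ small on a whole ball $B_R(x)$ $\Rightarrow$ drift small at $x$'' (Lemma~\ref{new-lm2}); the upgrade to a pointwise statement requires the Harnack-type estimate $\sup_{B_R(x)}u(t,\cdot)\le C\,(u(t,x))^{1/p}$ of Lemma~\ref{new-lm3} (borrowed from \cite{FhCh}), which is absent from your plan. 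Given \eqref{E2-1}, the paper compares $u$ not with a hump placed in a favorable region but with the solution $\underline u$ of an auxiliary problem whose drift is the truncation $A=\chi v_x/\max\{1,|\chi v_x|\eta^{-1}\}$, globally of size $\eta$, so that \cite[Theorem 1.2]{BHN} gives spreading of $\underline u$ at speed $>c$; the inequality $\underline u<u$ on $\{|x|\le(c_\kappa+N)t+m_0\}$ is then proved by a first-touching-point argument in which any contact point satisfies $u=\underline u<\eta_1$, whence by \eqref{E2-1} the drift is genuinely small there, $A=\chi v_x$ near that point, and the strong comparison principle yields a contradiction. These three ingredients (Harnack-based pointwise drift bound, truncated-drift auxiliary problem, touching-point comparison) are exactly the content of the step you yourself flagged as ``genuinely delicate'' but left unresolved; without them the sub-solution verification fails wherever $\chi v_x$ is not small.

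On \eqref{T1-eq2}, the paper does not squeeze: it argues by contradiction, extracts from the shifts $(u(t+t_n,\cdot+x_n;u_0),v(t+t_n,\cdot+x_n;u_0))$ an entire solution bounded below by a positive constant (using \eqref{T1-eq1}), and invokes the stability of $(\frac ab,\frac{a\mu}{b\lambda})$ under $b>2\chi\mu$ (\cite[Theorem 1.8]{SaSh1}) to conclude the limit is identically $\frac ab$. Your squeeze would require an equally quantified stability statement, so in substance it presupposes the same input; as written it is a sketch rather than a proof, though not a wrong approach.
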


\begin{rk}
Assume that $\chi\mu<b$.

(1) Theorem \ref{spreading-speed-upper-bound} provides an upper bound and a low bound for $c_+^*(\chi,a,b,\lambda,\mu)$ and
$c_-^*(\chi,a,b,\lambda,\mu)$, respectively. As it is recalled in the above, in the absence of chemotaxis (i.e. $\chi=0$), we have
$$
c_+^*(\chi,a,b,\lambda,\mu)=c_-^*(\chi,a,b,\lambda,\mu)=c_0^*(=2\sqrt a).
$$
 Theorem \ref{spreading-speed-upper-bound} (2) shows that the chemotaxis does not slow down the spreading speed
of the solutions with  nonempty compactly supported initials, and Theorem \ref{spreading-speed-upper-bound} (1) shows that,
when
$\big(1+\frac{1}{2}\frac{(\sqrt{a}-\sqrt{\lambda})_+}{(\sqrt{a}+\sqrt{\la})}\big)\chi\mu \leq b$,
the chemotaxis does not speed up the spreading speed
of the solutions with  nonempty compactly supported initials. We note that $\lambda\ge a$ and $\chi\mu<b$ implies that {\bf(H)} holds. 
Biologically,
$\lambda\ge a$ means that the degradation rate of the chemo-attractant is greater than or equal to the intrinsic growth rate  of the mobile species, and
 $\big(1+\frac{1}{2}\frac{(\sqrt{a}-\sqrt{\lambda})_+}{(\sqrt{a}+\sqrt{\la})}\big)\chi\mu \leq b$ indicates
  the chemotaxis sensitivity is small relative to the logistic damping.

(2)  In \cite{SaSh1,SaSh2}, the first two authors of the current paper obtained  some constants  $c_{\rm low}^{*}(\chi,\mu,a,b,\lambda,\mu)<2\sqrt a< c_{\rm up}^*(\chi,a,b,\lambda,\mu)$ depending explicitly on the parameter $\chi,a,b,\lambda$ and $\mu$ such that
$$
 c_+^*(\chi,a,b,\lambda,\mu)\le  c_{\rm up}^*(\chi,a,b,\lambda,\mu)
$$
and
$$
c_{-}^{*}(\chi,\mu,a,b,\lambda,\mu)\ge c_{\rm low}^*(\chi,a,b,\lambda,\mu).
$$
 There holds
 $$
 c_{\rm low}^*(\chi,a,b,\lambda,\mu)\le  c_0^*\le c^*\le c_{\rm up}^*(\chi,a,b,\lambda,\mu).
 $$
 Hence Theorem \ref{spreading-speed-upper-bound} is an improvement of the results contained in \cite{SaSh1,SaSh2} on the lower and
 upper bounds for the spreading speeds of solutions with  nonempty compactly supported initials.

 (3)  The results in Theorem \ref{spreading-speed-upper-bound}(1) are new.
  Theorem \ref{spreading-speed-upper-bound}(2)  is proved using the similar arguments as those in \cite[Theorem 1.1]{FhCh}.
 Actually, in  the case $a=b=1$ and $\lambda=\mu$,   Theorem \ref{spreading-speed-upper-bound}(2) is proved
  in \cite[Theorem 1.1]{FhCh}.  The results in Theorem \ref{spreading-speed-upper-bound}(2) for the general case are new.
\end{rk}

The techniques developed to prove the above results can be used to study the spreading speeds of solutions with front like initials. Indeed,  let
$$
\tilde C_c^+(\R)=\{u\in C_{\rm unif}^b(\R)\,|\, u(x)\ge 0,\,\, \liminf_{x\to-\infty}u(x)>0\,\,\, \text{ and}\,\, u(x)=0 \ \text{for}\, x\gg 0\}.
$$
We can establish the following result.

\begin{tm}\label{spreading-speed-upper-bound-of-front-like-initials}
Suppose that $0<\chi\mu<b$.   Then
\begin{itemize}
\item[(1)]  For any $u_0\in\tilde C_c^+(\R)$, there holds
$$
\lim_{t\to\infty}\sup _{x\ge ct}  u(x,t;u_0)=0\quad  \forall\, c>c^*.
$$
In particular,  if {\bf (H)} holds, then for any $u_0\in\tilde C_c^+(\R)$, there holds
$$
\lim_{t\to\infty}\sup _{x\ge ct}  u(x,t;u_0)=0\quad  \forall\, c>c_0^*(=2\sqrt a).
$$

\item[(2)] For any $u_0\in \tilde C_c^+(\R)$, there holds
$$
\liminf_{t\to\infty}\inf_{x\le ct} u(x,t;u_0)>0\quad  \forall\,  0<c<c_0^*(=2\sqrt a).
$$
Moreover, if $ 2\chi\mu<b$, then for any $u_0\in\tilde C_c^+(\R)$,
\begin{equation}\label{T3-eq2}
\lim_{t\to\infty}\sup_{x\leq  ct}|u(t,x;u_0)-\frac{a}{b}|=0\quad \forall\, 0<c<c_0^*(=2\sqrt a).
\end{equation}
\end{itemize}
\end{tm}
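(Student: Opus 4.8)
The plan is to deduce both parts from the already-established Theorem~\ref{spreading-speed-upper-bound} for compactly supported data, using the translation invariance of \eqref{Keller-Segel-eq} and the monotonicity of the solution map $u_0\mapsto u(\cdot,\cdot;u_0)$ with respect to the initial datum (the comparison principle). Throughout I would work with the reformulation obtained by substituting $v_{xx}=\lambda v-\mu u$ into the first equation,
\begin{equation*}
u_t=u_{xx}-\chi v_x u_x+u\big(a-\chi\lambda v-(b-\chi\mu)u\big),
\end{equation*}
which exhibits $-\chi\lambda v u\le 0$ and $-(b-\chi\mu)u^2\le 0$ as the two sign-definite ingredients (here $b>\chi\mu$ is used), together with the elliptic representation $v(x)=\tfrac{\mu}{2\sqrt\lambda}\int_{\R}e^{-\sqrt\lambda|x-y|}u(y)\,dy$ and the consequent bounds $0\le v\le\tfrac{\mu}{\lambda}\|u\|_\infty$ and $\|v_x\|_\infty\le C\|u\|_\infty$. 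I would also record that every $u_0\in\tilde C_c^+(\R)$ is bounded and that, by comparison with the spatially homogeneous logistic supersolution, $u(t,\cdot;u_0)\le M:=\max\{\|u_0\|_\infty,\tfrac{a}{b-\chi\mu}\}$ for all $t\ge 0$.

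For part (1) the key observation is that the upper bound is a purely one-sided statement about $x\to+\infty$, where a front-like datum is indistinguishable from a compactly supported one. Concretely, the exponentially decaying (rate $a^*$), rightward-travelling supersolution $\bar u(t,x)=\min\{M,Me^{-a^*(x-x_1-ct)}\}$ that proves Theorem~\ref{spreading-speed-upper-bound}(1) already dominates any $u_0\in\tilde C_c^+(\R)$: it equals $M\ge u_0$ to the left of its corner and decays to $0$ to the right, where $u_0\equiv 0$. Hence $\bar u(0,\cdot)\ge u_0$, and for $c>c^*$ the comparison principle yields $u(t,x;u_0)\le\bar u(t,x)$, from which $\lim_{t\to\infty}\sup_{x\ge ct}u(t,x;u_0)=0$ follows; under {\bf(H)} one has $a^*=\sqrt a$ and $c^*=2\sqrt a$, giving the special case. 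The verification that $\bar u$ is a supersolution in the presence of the drift $-\chi v_x u_x$ is exactly the content already carried out for Theorem~\ref{spreading-speed-upper-bound}(1), and it is there that the constraint defining $a^*$ is consumed.

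For the first (positivity) claim of part (2) I would invade from the left with compactly supported sub-data. Fix $\delta>0$ and $L$ with $u_0\ge\delta$ on $(-\infty,-L]$, and for each $x_0\le -L-r$ let $\psi_{x_0}\in C_c^+(\R)$ be a fixed bump of height $\le\delta$ and radius $r$ centred at $x_0$, so $\psi_{x_0}\le u_0$. By comparison $u(t,x;u_0)\ge u(t,x;\psi_{x_0})$, while translation invariance together with \eqref{T1-eq1} of Theorem~\ref{spreading-speed-upper-bound}(2) bounds the latter below, uniformly in $x_0$, by a positive constant on $|x-x_0|\le c't$ for any fixed $c'<2\sqrt a$. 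Taking the union over $x_0\le -L-r$ covers $(-\infty,-L-r+c't]\supseteq(-\infty,ct]$ for any $c<c'$ and all large $t$, which gives $\liminf_{t\to\infty}\inf_{x\le ct}u(t,x;u_0)>0$.

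It then remains, for \eqref{T3-eq2}, to sharpen this to convergence, and here $2\chi\mu<b$ is essential. Let $m_-\le m_+$ denote the liminf and limsup of $u$ over the moving half-line $\{x\le ct\}$. The plan is to derive the coupled inequalities $m_+\le\tfrac{a-\chi\mu m_-}{b-\chi\mu}$ and $m_-\ge\tfrac{a-\chi\mu m_+}{b-\chi\mu}$ by combining the localization of the exponential kernel (so that, well behind the front, $v(x)$ is trapped between $\tfrac{\mu}{\lambda}$ times the local inf and sup of $u$, the contribution of the far region $\{y>c't\}$ being exponentially negligible) with moving-domain comparisons against near-constant sub/supersolutions of the reformulated equation. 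The affine map $m\mapsto\tfrac{a-\chi\mu m}{b-\chi\mu}$ has fixed point $\tfrac ab$ and contraction factor $\tfrac{\chi\mu}{b-\chi\mu}$, which is $<1$ \emph{precisely} when $2\chi\mu<b$; iterating the two inequalities then forces $m_-=m_+=\tfrac ab$, and combining with part (1) gives \eqref{T3-eq2}. The hard part is genuinely technical: establishing the trapping of the nonlocal $v$ by the local oscillation of $u$, running the comparisons on $\{x\le ct\}$ while controlling the flux at the boundary $x=ct$ (where only $u\le\tfrac{a}{b-\chi\mu}$ is known), and absorbing the drift $-\chi v_x u_x$, which is harmless only once parabolic gradient estimates confirm that $u$ is asymptotically flat far behind the front.
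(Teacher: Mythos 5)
Your proposal hinges, at two critical points, on the monotonicity of the solution map $u_0\mapsto u(\cdot,\cdot;u_0)$ with respect to the initial datum, and this is precisely what fails for \eqref{Keller-Segel-eq}: since $v$ is determined nonlocally by $u$, the advection term $-\chi(uv_x)_x$ destroys order preservation, and two solutions of the full system cannot be compared merely because their initial data are ordered. Note that the paper never compares two solutions of \eqref{Keller-Segel-eq}; every comparison it runs is between the actual solution and a sub- or super-solution of an auxiliary \emph{local} parabolic equation with frozen coefficients --- either inside the Schauder fixed-point scheme of Lemma \ref{lem-001-1} (for each fixed $u\in\mathcal{E}^T$ the operator $\mathcal{A}_u$ is a local operator, so the parabolic comparison principle applies to it), or against the truncated drift $A(t,x)$ in Steps 1--2 of the proof of Theorem \ref{spreading-speed-upper-bound}(2), where comparison is confined to moving regions on which $A\equiv\chi v_x$. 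Concretely, the step ``by comparison $u(t,x;u_0)\ge u(t,x;\psi_{x_0})$'' in your positivity argument for part (2) is unjustified and cannot be repaired by choosing the bumps differently; this claim has to be proved as in the paper, i.e.\ via the differential inequality \eqref{AA-eq1} (Lemmas \ref{new-lm2} and \ref{new-lm3}), the auxiliary subsolution \eqref{AA-eq4} with truncated drift, the spreading result of \cite[Theorem 1.2]{BHN}, and the moving-domain comparison of Step 2, now run on the one-sided domain $x\le (c_\kappa+N)t+m_0$. In that adaptation the constant $m_1$ becomes an infimum over a set unbounded to the left, so its positivity is no longer automatic and requires the persistence result, Theorem \ref{persitence-tm}, exactly as in the claim ``$m_1>0$'' in the proof of Theorem \ref{Main-tm-2}(1).

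Part (1) of your argument survives, but not for the reason you give: ``$\bar u(0,\cdot)\ge u_0$ plus comparison'' is again solution-versus-supersolution reasoning that is only legitimate once the supersolution property is established relative to the drift generated by the solution itself, and that is exactly what the fixed-point construction of Lemma \ref{lem-001-1}(ii) accomplishes; since that lemma is stated precisely for data dominated by $\min\{M,Me^{-\kappa x}\}$, which every $u_0\in\tilde C_c^+(\R)$ satisfies, part (1) follows by citing it (your remark that the verification ``is exactly the content already carried out'' is correct in substance). For \eqref{T3-eq2}, your $m_\pm$-iteration scheme, with contraction factor $\chi\mu/(b-\chi\mu)<1$ exactly when $2\chi\mu<b$, is a genuinely different and in principle plausible route, but as written it is a program rather than a proof: the trapping of the nonlocal $v$ by the local oscillation of $u$, the flux control at $x=ct$, and the asymptotic flatness of $u$ behind the front are all left open, and the scheme takes the positivity claim --- unproven by your argument --- as input. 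The paper instead concludes by compactness: translate along sequences $t_n\to\infty$, $x_n\le ct_n$, extract an entire solution bounded below by a positive constant, and invoke the stability of $(\frac{a}{b},\frac{a\mu}{b\lambda})$ under $2\chi\mu<b$, as in Step 4 of the proof of Theorem \ref{spreading-speed-upper-bound}; this is shorter and bypasses all three technical issues you flag.
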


\medskip

We also discuss the spreading properties of solutions of \eqref{Keller-Segel-eq} with initial functions satisfying some exponential decay 
property at infinity. In this direction, we have the following result.

\medskip

\begin{tm}\label{Main-tm-2}
Suppose that $0<\chi\mu<b$.
\begin{itemize}
\item[(1)] If $u_0\in C^b_{\rm unif}(\R)$ satisfies  that
\begin{equation}\label{u-0-condition-1}
\inf_{x\le x_0}u_{0}(x)>0 \quad \forall \,  x_0\in\R, \quad\text{and}\quad \lim_{x\to\infty}\frac{u_0(x)}{e^{-\kappa x}}=1
\end{equation}
for some $ 0<\kappa<\sqrt{a}$ with $\frac{(\kappa-\sqrt{\lambda})_{+}}{(\kappa +\sqrt{\lambda})}\leq \frac{2(b-\chi\mu)}{\chi\mu}$, then
\begin{equation}\label{Spreadind-speed-eq1}
\lim_{t\to\infty}\sup_{x\geq (c_{\kappa}+\varepsilon)t}|u(t,x;u_0)|=0, \quad\forall\ 0<\varepsilon\ll 1
\end{equation}
and
\begin{equation}\label{Spreadind-speed-eq1-1}
\liminf_{t\to\infty}\inf_{x\leq (c_{\kappa}-\varepsilon)t}|u(t,x;u_0)|>0, \quad\forall\ 0<\varepsilon\ll 1,
\end{equation}
where $c_{\kappa}=\frac{a+\kappa^2}{\kappa}$.

\item[(2)] Let $u_0$ be as in (1). If  $b>2\chi\mu$,  then
\begin{equation}\label{Spreadind-speed-eq2}
\lim_{t\to\infty}\sup_{x\leq (c_{\kappa}-\varepsilon)t}|u(t,x;u_0)-\frac{a}{b}|=0, \quad\forall\ 0<\varepsilon\ll 1,
\end{equation}
and,  if in addition, $\kappa<\min\{\sqrt a, \sqrt \lambda\}$, then
\begin{equation}\label{Spreadind-speed-eq3}
\lim_{t\to\infty}\sup_{x\geq (c_{\kappa}+\varepsilon)t}\left|\frac{u(t,x;u_0)}{e^{-\kappa( x-c_\kappa t)}}-1\right|=0, \quad\forall\ 0<\varepsilon\ll 1.
\end{equation}
\end{itemize}
\end{tm}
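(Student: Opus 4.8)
The first move is to remove the chemical: substituting $v_{xx}=\lambda v-\mu u$ into the first equation of \eqref{Keller-Segel-eq} recasts it as the scalar equation
$$
u_t=u_{xx}-\chi u_x v_x+u\big(a-\chi\lambda v-(b-\chi\mu)u\big),
$$
in which $v$ enters only through $v$ and $v_x$. From the Green's function representation $v=\frac{\mu}{2\sqrt\lambda}\,e^{-\sqrt\lambda|\cdot|}*u$ and $u\ge0$ one reads off the two estimates $0\le v\le\frac{\mu}{\lambda}\|u\|_\infty$ and the drift bound $|v_x|\le\sqrt\lambda\,v$, and, when $u$ is nonincreasing in $x$, that $v$ is nonincreasing as well. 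These reduce matters to a scalar problem with a controllable transport term, to which the comparison principle for \eqref{Keller-Segel-eq} (available since $b>\chi\mu$) applies. I will use repeatedly the dispersion relation $\kappa c_\kappa=\kappa^2+a$ and the fact that $0<\kappa<\sqrt a$ forces $c_\kappa=\frac{a+\kappa^2}{\kappa}>2\sqrt a=c_0^*$.

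For the upper bound \eqref{Spreadind-speed-eq1} I would take the capped supersolution $\bar u(t,x)=\min\{M,\,Ce^{-\kappa(x-c_\kappa t)}\}$ with $C>1$ and $M\ge\max\{\|u_0\|_\infty,\frac{a}{b-\chi\mu}\}$. On the exponential branch the linear operator $\partial_t-\partial_{xx}-a$ annihilates $\bar u$ by the dispersion relation, and the three leftover terms $\chi\bar u_x\bar v_x$, $\chi\lambda\bar u\bar v$, $(b-\chi\mu)\bar u^2$ are each nonnegative --- the drift term because $\bar u$, and hence $\bar v$, are nonincreasing in $x$; on the plateau branch the reaction $a-\chi\lambda\bar v-(b-\chi\mu)M$ is nonpositive by the choice of $M$; and the downward corner only helps a supersolution. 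Since $u_0(x)\le Ce^{-\kappa x}$ for $x$ large and $u_0\le M$, comparison gives $u\le\bar u$, which tends to $0$ uniformly on $\{x\ge(c_\kappa+\varepsilon)t\}$. Notice this direction needs only $b>\chi\mu$; the standing hypothesis on $\kappa$ is not consumed here.

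The substantive direction is the lower bound \eqref{Spreadind-speed-eq1-1}, where a pure exponential cannot be a subsolution: in the inequality obtained from the display above the chemotactic terms $-\chi u_xv_x-\chi\lambda uv$ carry the front-retarding sign, and this is exactly where the restriction $\frac{(\kappa-\sqrt\lambda)_+}{(\kappa+\sqrt\lambda)}\le\frac{2(b-\chi\mu)}{\chi\mu}$ must be spent. For each $c<c_\kappa$ close to $c_\kappa$ I would build a subsolution of the form $\underline u(t,x)=\eta\big(e^{-\kappa(x-ct)}-Be^{-\kappa'(x-ct)}\big)_+$ with $\kappa'>\kappa$ and small $\eta,B>0$, estimating $v$ and $v_x$ in the leading edge by their decay rate $\min\{\kappa,\sqrt\lambda\}$ through the Green's function; the hypothesis on $\kappa$ is precisely the bookkeeping that makes the retarding drift dominated by the logistic gain $au-(b-\chi\mu)u^2$, so that the subsolution inequality survives for speeds $c\nearrow c_\kappa$. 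Feeding these subsolutions into the comparison principle, together with the persistence $\liminf_{t}u(t,\cdot)\ge\delta$ on every left half-line inherited from $\inf_{x\le x_0}u_0>0$, yields \eqref{Spreadind-speed-eq1-1} and fixes the spreading speed at $c_\kappa$.

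Finally, part (2). When $b>2\chi\mu$ the homogeneous state $\frac ab$ (with $v\equiv\frac\mu\lambda\frac ab$) is attracting, and the stronger damping controls the chemotactic drift $-\chi u_xv_x$ during relaxation; sandwiching $u$ between the constant barriers $\frac ab\pm\delta$ on $\{x\le(c_\kappa-\varepsilon)t\}$, where \eqref{Spreadind-speed-eq1-1} already confines $u$ to a compact subset of $(0,\infty)$, upgrades the lower bound to the uniform convergence \eqref{Spreadind-speed-eq2}, along the lines of the proofs of \eqref{T1-eq2} and \eqref{T3-eq2}. For the sharp leading-edge profile \eqref{Spreadind-speed-eq3} the extra hypothesis $\kappa<\min\{\sqrt a,\sqrt\lambda\}$ is decisive: $\kappa<\sqrt\lambda$ forces $v\sim\frac{\mu}{\lambda-\kappa^2}e^{-\kappa x}$, i.e.\ $v$ decays at the \emph{same} rate as $u$, so the chemotactic terms are genuinely quadratic and the equation linearizes to $u_t=u_{xx}+au$ in the far field; transporting the normalization $u_0/e^{-\kappa x}\to1$ by this linear flow and squeezing $u$ between $(1\pm\varepsilon)e^{-\kappa(x-c_\kappa t)}$ gives \eqref{Spreadind-speed-eq3}. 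The crux throughout is the subcritical subsolution of the third paragraph: pushing its speed up to $c_\kappa$ while absorbing the nonlocal drift is what forces the sharp threshold on $\kappa$, and the two-sided Green's-function control it requires is the same ingredient that makes the leading-edge asymptotics \eqref{Spreadind-speed-eq3} work.
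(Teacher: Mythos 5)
Your plan for the upper bound \eqref{Spreadind-speed-eq1} contains a genuine error. The drift that appears in the equation satisfied by $u$ is $v_x$, the chemical gradient generated by the \emph{actual} solution $u$, not by your supersolution; your sign argument ("the drift term is harmless because $\bar u$, and hence $\bar v$, are nonincreasing") controls $\bar v_x$, which never enters the comparison. Since $u_0$ is not assumed monotone, nothing forces $v_x\le 0$, and there is a circularity you cannot break by direct comparison: one needs $u\lesssim e^{-\kappa x}$ to know that $\chi\kappa v_x-\chi\lambda v$ is of order $e^{-\kappa x}$ at the leading edge, but that decay of $u$ is exactly what is being proved. The paper breaks this circle with a Schauder fixed-point argument in the moving frame (Lemma \ref{lem-001-1}): for each frozen $u$ in the class $\mathcal{E}^T$ (so $0\le u\le Me^{-\kappa x}$), Lemma \ref{lem-001} gives $\chi\kappa\Psi_x(\cdot;u)-\chi\lambda\Psi(\cdot;u)\le(b-\chi\mu)Me^{-\kappa x}$, i.e.\ \eqref{estimate-on-space-derivative-2}, which makes $Me^{-\kappa x}$ a supersolution of the frozen-coefficient problem; the fixed point is then the true solution, yielding \eqref{new-eq1-2}. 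Moreover your assertion that this direction "needs only $b>\chi\mu$" and does not consume the hypothesis on $\kappa$ is exactly backwards: the restriction $\frac{(\kappa-\sqrt\lambda)_+}{\kappa+\sqrt\lambda}\le\frac{2(b-\chi\mu)}{\chi\mu}$ is spent precisely in \eqref{estimate-on-space-derivative-2}, because when $\kappa>\sqrt\lambda$ the chemical decays only like $e^{-\sqrt\lambda x}$, the term $\chi\kappa v_x-\chi\lambda v$ is genuinely positive of size $\frac{\chi\mu M}{2(\kappa+\sqrt\lambda)}(\kappa-\sqrt\lambda)e^{-\kappa x}$, and it must be beaten by $(b-\chi\mu)Me^{-\kappa x}$. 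If the upper bound were free of this hypothesis, Theorem \ref{spreading-speed-upper-bound}(1) would give $c_+^*\le 2\sqrt a$ unconditionally, which the paper does not (and cannot) claim.

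The lower bound \eqref{Spreadind-speed-eq1-1} is also not where the hypothesis on $\kappa$ goes, and your sketch misses the ingredient that makes it work. The obstruction is not in the leading edge (where Green's-function decay indeed helps) but at any point where a small subsolution could first touch $u$ from below while sitting in a region where $v_x$ is of order one; a constant bound such as $|\chi v_x|\le\chi\sqrt\lambda\|v\|_\infty$ is not small and your inequality fails there. The paper's mechanism, following Hamel--Henderson, is the local estimate $|\chi v_x(t,x)|+\chi\lambda v(t,x)\le C_{R,p}\,u(t,x)^{1/p}+\varepsilon_R M$ (Lemmas \ref{new-lm2} and \ref{new-lm3}, giving \eqref{E2-1}): the drift is automatically small \emph{wherever $u$ itself is small}, in particular at any first-touching point with a subsolution of small amplitude. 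One then truncates the drift (so that a Fisher-KPP traveling wave $\sigma\underline U^{\delta}$ of speed $c^\delta_\kappa-\varepsilon$, monotone so the truncation has the right sign, is a subsolution as in \eqref{Z-eq2}), runs a first-touching-time argument on the moving domain $\{x\le(c_\kappa+N)t+m_0\}$ where the upper bound from Lemma \ref{lem-001-1} guarantees $u<\eta_1$ near the right boundary, and uses the persistence result (Theorem \ref{persitence-tm}) both for \eqref{Z-eq0} and to show $m_1>0$; this yields \eqref{Z-eq4}. Your proposal has the right shape of subsolution but neither the $u^{1/p}$ estimate, nor the drift truncation, nor the moving-domain touching argument, and without them the step fails. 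Finally, for \eqref{Spreadind-speed-eq3} the paper does not linearize in the far field: it runs a second fixed-point argument squeezing $u$ between $(1-\varepsilon)\underline U_{\kappa,D}$ and $(1+\varepsilon)\overline U_{\kappa,D}$ built from the two exponentials $e^{-\kappa x}$ and $e^{-\tilde\kappa x}$, which is where $\kappa<\min\{\sqrt a,\sqrt\lambda\}$ is used.
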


\begin{rk}
The spreading results  established in Theorem  \ref{Main-tm-2} are new.
\end{rk}

To state our main results on traveling wave solutions, we first introduce the concept of traveling wave solutions.
{\it An entire solution} of \eqref{Keller-Segel-eq} is a classical solution  $(u(t,x),v(t,x))$ of \eqref{Keller-Segel-eq} which is defined for all $x\in\R$ and $t\in\R$. Note that the constant solutions $(u(t,x),v(t,x))=(0,0)$ and  $(u(t,x),v(t,x))=(\frac{a}{b},\frac{\mu a}{\lambda b})$ are clearly two particular entire solutions of \eqref{Keller-Segel-eq}.  An entire solution of \eqref{Keller-Segel-eq} of the form $(u(t,x),v(t,x))=(U^c(x-ct),V^c(x-ct))$ for some constant $c\in\R$ is called a {\it traveling wave solution} with speed $c$.
 A traveling wave solution $(u(t,x),v(t,x))=(U^c(x-ct),V^c(x-ct))$ of \eqref{Keller-Segel-eq} with speed c is said to connect $(0,0)$ and $(\frac{a}{b},\frac{\mu a}{\lambda b})$  if
\begin{equation}\label{Main-TW-eq}
\liminf_{x\to-\infty}U^c(x)=\frac{a}{b} \quad \text{and}\quad \limsup_{x\to\infty}U^c(x)=0.
\end{equation}
 We say a traveling wave solution $(u(t,x),v(t,x))=(U^c(x-ct),V^c(x-ct))$ of \eqref{Keller-Segel-eq} is nontrivial and connects $(0,0)$ at one end if
\begin{equation}\label{Persisence-TW-eq}
\liminf_{x\to-\infty}U^c(x)>0 \quad \text{and}\quad \limsup_{x\to\infty.}U^c(x)=0.
\end{equation}

Our main results on the existence of traveling wave solutions of \eqref{Keller-Segel-eq} read as follows.

\begin{tm}\label{Existence of traveling} Suppose that $\chi\mu<b$.
\begin{itemize}
\item[(1)] For every $0<\kappa<\min\{\sqrt{a},\sqrt{\lambda}\}$, \eqref{Keller-Segel-eq} has a  traveling wave solution $(u(t,x),v(t,x))=(U(x-c_{\kappa}t),V(x-c_{\kappa}t))$ satisfying
\begin{equation}
\label{nnew-eq0}
\lim_{x\to\infty}\frac{U(x)}{e^{-\kappa x}}=1 \quad \text{and}\quad
\liminf_{x\to-\infty}U(x)>0,
\end{equation}
where $c_{\kappa}=\frac{\kappa^2+a}{\kappa}$. Hence \eqref{Keller-Segel-eq} has a  traveling wave solution satisfying  \eqref{nnew-eq0} with speed $c$ for every $c>c^{**}:=\frac{a+\min\{a,\lambda\}}{\min\{\sqrt{a},\sqrt{\lambda}\}}$.

 If, in addition,  $b>2\chi\mu$, then   $U(x)$ also satisfies
\begin{equation}
\label{nnew-eq00}
\lim_{x\to-\infty}|U(x)-\frac{a}{b}|=0.
\end{equation}

\item[(2)]   If $b>2\chi\mu$,
then \eqref{Keller-Segel-eq} has a traveling wave solution with
speed $c= c^{**}$ connecting $(0,0)$ and $(\frac{a}{b},\frac{\mu a}{\lambda b})$.

\item[(3)] \eqref{Keller-Segel-eq} has no traveling wave solutions satisfying  \eqref{Persisence-TW-eq} with
speed $c<c_0^*=2\sqrt{a}$.
\end{itemize}
\end{tm}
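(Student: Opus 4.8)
The plan is to prove Theorem~\ref{Existence of traveling}(3) by a contradiction argument based on the linearization at the trivial state $(0,0)$, exploiting the fact that for speeds $c<c_0^*=2\sqrt a$ the characteristic equation governing the spatial decay of $u$ has no real root. Suppose, for contradiction, that $(U^c,V^c)$ is a nontrivial traveling wave connecting $(0,0)$ at $+\infty$ (i.e.\ satisfying \eqref{Persisence-TW-eq}) with some speed $c<2\sqrt a$. Writing $u(t,x)=U^c(x-ct)$, $v(t,x)=V^c(x-ct)$ and setting $\xi=x-ct$, the profile equations are
\begin{equation}\label{tw-profile-plan}
\begin{cases}
U''+cU'-\chi(UV')'+U(a-bU)=0,\quad \xi\in\R,\cr
V''-\lambda V+\mu U=0,\quad \xi\in\R,
\end{cases}
\end{equation}
with $U\ge 0$, $U\not\equiv 0$, $\liminf_{\xi\to-\infty}U(\xi)>0$ and $\limsup_{\xi\to+\infty}U(\xi)=0$. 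First I would record a priori bounds: since $b>\chi\mu$, the solution framework of the cited Proposition guarantees $U$ is bounded, and the elliptic equation for $V$ together with $0\le U$ gives $0\le V'(\xi)$ controlled by $U$ in a way that makes the chemotactic term $\chi(UV')'$ lower order compared with the linear part as $\xi\to+\infty$.

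The heart of the argument is the behavior as $\xi\to+\infty$, where $U\to 0$. I would show that $U$ is strictly positive for all $\xi$ (by the strong maximum principle applied to the first equation, once it is written in the form $U''+(c-\chi V')U'+(a-bU-\chi V'')U=0$ and one notes $V''=\lambda V-\mu U$ is bounded), so $U>0$ everywhere. Then I would study the decay rate. The linearization at $U=0$ reads, to leading order, $U''+cU'+aU\approx 0$ (the nonlinear term $bU^2$ and the chemotactic term, which is quadratically small in $U$ near the front, are negligible), whose characteristic equation $\gamma^2+c\gamma+a=0$ has discriminant $c^2-4a<0$ when $c<2\sqrt a$. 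This forces any nonnegative solution decaying at $+\infty$ to oscillate and change sign, contradicting $U>0$. To make this rigorous I would not argue directly on $U$ but instead integrate against a suitable positive test function or use a sliding/comparison argument: construct a subsolution or use the fact that a positive supersolution of $U''+cU'+(a-\varepsilon)U=0$ cannot exist on a half-line when $(c)^2<4(a-\varepsilon)$, choosing $\varepsilon$ small so that $c^2<4(a-\varepsilon)$ still holds.

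Concretely, the cleanest route is the following. Fix $\varepsilon>0$ with $c^2<4(a-\varepsilon)$. Because $U(\xi)\to 0$ and $V'(\xi),V''(\xi)\to 0$ as $\xi\to+\infty$ (the latter from the elliptic equation and decay of $U$), there is $\xi_0$ such that for $\xi\ge\xi_0$ the profile $U$ is a positive solution of a linear inequality of the form $U''+cU'+(a-\varepsilon)U\le 0$ on $[\xi_0,\infty)$. I would then invoke the standard oscillation lemma for second-order ODEs: if $L[w]=w''+cw'+qw$ with constant (or asymptotically constant) $q$ satisfying $c^2<4q$, then no positive solution of $L[w]\le 0$ exists on a half-line, because comparing with the oscillatory solutions $e^{-c\xi/2}\cos(\omega\xi)$, $\omega=\tfrac12\sqrt{4q-c^2}$, via a Sturm-type comparison forces any such $w$ to have a zero. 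This contradicts $U>0$ on $[\xi_0,\infty)$ and completes the proof.

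The main obstacle I anticipate is controlling the chemotactic term $\chi(UV')'=\chi(U'V'+UV'')$ near $\xi=+\infty$ and justifying that it is genuinely of higher order, so that the reduction to the constant-coefficient inequality $U''+cU'+(a-\varepsilon)U\le 0$ is valid. This requires quantitative decay estimates: from $V''=\lambda V-\mu U$ with $U$ small one gets $V$ and its derivatives comparable to $U$ near $+\infty$, so $V'\to 0$ and $V''\to 0$, whence $\chi V'U'$ and $\chi V''U$ are $o(U)$ plus terms absorbable by shrinking $\varepsilon$; making this precise, and simultaneously ensuring $U'$ itself decays (via a standard parabolic/elliptic bootstrap or a phase-plane boundedness argument for \eqref{tw-profile-plan}), is the delicate bookkeeping. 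Once these decay estimates are in hand, the Sturm oscillation step is routine and the sign contradiction closes the argument.
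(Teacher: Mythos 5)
Your proposal addresses only part (3) of the theorem, and this is the central gap: parts (1) and (2) --- the existence, for every $0<\kappa<\min\{\sqrt a,\sqrt\lambda\}$, of a wave profile with the prescribed decay $\lim_{x\to\infty}U(x)e^{\kappa x}=1$, the limit \eqref{nnew-eq00} at $-\infty$ when $b>2\chi\mu$, and the existence of a wave at the critical speed $c=c^{**}$ --- are never engaged, and they cannot be reached by a nonexistence-type linearization argument. The paper proves them by a fixed-point construction: for each $u$ in the set $\mathcal{E}_\kappa$ of functions squeezed between the explicit sub- and super-solutions $U^-_{\kappa,D}(x)=\max\{0,e^{-\kappa x}-De^{-\tilde\kappa x}\}$ and $U^+_\kappa(x)=\min\{\tfrac{a}{b-\chi\mu},e^{-\kappa x}\}$, one solves a parabolic problem whose drift and potential are built from $\Psi(\cdot;u)$, passes to the limit $t\to\infty$ to get a map $u\mapsto U(\cdot;u)$, and applies Schauder's theorem in the compact-open topology; the behavior at $-\infty$ then comes from the persistence and stability results (Theorem \ref{persitence-tm} and \cite[Theorem 1.8]{SaSh1}), and the critical speed is reached by a limiting argument $c_n\downarrow c^{**}$ with the normalization $U^{c_n}(x_n)=\tfrac{a}{2b}$. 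None of this machinery, nor any substitute for it, appears in your proposal, so as a proof of the stated theorem it is incomplete.

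For part (3) itself your route is genuinely different from the paper's and is viable. The paper argues dynamically: viewing the wave as an entire solution, the subsolution construction behind Theorem \ref{Main-tm-2}(1) forces $\liminf_{t\to\infty}\inf_{x\le(2\sqrt a-2\varepsilon)t}u(t,x)>0$, which is incompatible with $U(\infty)=0$ unless $c\ge 2\sqrt a-2\varepsilon$ for every small $\varepsilon$. You instead work on the profile ODE at the leading edge, which is self-contained but requires two repairs. First, the chemotactic term is not ``quadratically small in $U$'': $V$ is a nonlocal average of $U$, so near $+\infty$ one can have $V\gg U$ pointwise (e.g. $U\sim e^{-\kappa\xi}$ with $\kappa>\sqrt\lambda$ forces $V\sim e^{-\sqrt\lambda\xi}$). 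What your argument actually needs --- and what is true --- is only that $V,V',V''\to 0$ as $\xi\to+\infty$, which follows from $\limsup_{\xi\to\infty}U(\xi)=0$ together with the kernel formula \eqref{psi-definition-eq2}, the bound $|V'|\le\sqrt\lambda\,V$ of Lemma \ref{lem-001}, and $V''=\lambda V-\mu U$. Second, since the drift $c-\chi V'$ is variable, the reduction to a constant-coefficient oscillation statement needs either the Liouville substitution $U=\exp\bigl(-\tfrac12\int(c-\chi V')\bigr)z$ (whose potential acquires the extra, vanishing, term $\tfrac{\chi}{2}V''$) or a Riccati argument on $r=U'/U$; with either device, positivity of $U$ on a half-line contradicts $c^2<4(a-\varepsilon)$, and no decay of $U'$ is required, so the bootstrap you worry about can be dropped entirely. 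In short: your part (3) argument is correct in outline and more elementary than the paper's, but it covers only one third of the theorem.
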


\begin{rk}
(1) It is known that in the absence of chemotaxis (i.e. $\chi=0$), $c_0^*=2\sqrt a$ is the minimal wave speed of \eqref{fisher-kpp}
in the sense that for any $c\ge c_0^*$, \eqref{fisher-kpp} has a traveling wave solution connecting $\frac{a}{b}$ and $0$ with speed $c$, and
has no such traveling wave solutions with speed less than $c_0^*$. Theorem \ref{Existence of traveling} implies that,
when  $b>2\chi\mu$, and $\lambda\ge a$, $c_0^*=2\sqrt a$ is also the minimal wave speed of traveling wave solutions of \eqref{Keller-Segel-eq} satisfying \eqref{Main-TW-eq}.

(2) Theorem \ref{Existence of traveling}  improves the results obtained in \cite{SaSh2}. 
Indeed,  assume that $2\chi\mu<b$. In \cite{SaSh2}, a positive constant $c^*(\chi, a, b, \lambda, \mu)>c^*_0$, which  depends on the parameters $\chi, a, b, \lambda,$ and  $\mu$, is obtained so that  for any $c>c^*(\chi, a, b, \lambda, \mu)$, \eqref{Keller-Segel-eq} has a traveling wave solution $(u(t,x),v(t,x))=(U(x-ct),V(x-ct))$ with speed $c$ connecting the constant solutions $(\frac{a}{b}, \frac{\mu}{\lambda}\frac{a}{b})$ and $(0,0)$.  It  left as an open question whether \eqref{Keller-Segel-eq} has a minimal wave speed $c_{\min}$ (i.e., whether there is $c_{\min}$ such that  \eqref{Keller-Segel-eq} has  a traveling wave solution $(u(t,x),v(t,x))=(U(x-ct),V(x-ct))$  connecting  $(\frac{a}{b}, \frac{\mu}{\lambda}\frac{a}{b})$ and $(0,0)$ with speed $c$ for any $c\ge c_{\min}$, and  has no such  traveling wave solution with speed $c<c_{\min}$). Note that when $\lambda\ge a$, $c^{**}=c_0^*$.
  Theorem 1.4 then implies  that  if $\lambda\ge a$, then  \eqref{Keller-Segel-eq} has a  minimal wave speed $c_{\min}$ and $c_{\min}=c_0^*$.
   Hence Theorem \ref{Existence of traveling} is an improvement of the results obtained in \cite{SaSh2}. 

\end{rk}

\subsection{Discussions}

In this subsection, we give some discussions on our main results.

Chemotaxis models describe the oriented movements of biological cells and organisms in response to chemical gradient. 
Consider \eqref{Keller-Segel-eq0} and its counterpart on a bounded domain $\Omega\subset\R^N$,
\begin{equation}\label{Keller-Segel-bounded-domain}
\begin{cases}
u_t=\Delta u-\nabla \cdot (\chi u \nabla v)+u(a-bu),\quad x\in\Omega\cr
0 =\Delta v-  \lambda v +\mu u,\quad x\in\Omega\cr
\frac{\p u}{\p n}=\frac{\p v}{\p n}=0,\quad x\in\Omega.
\end{cases}
\end{equation}
Suppose that $u(t,x)$ is the population density of certain biological cells and $v(t,x)$ is the  density of some chemical substance.
Then, in \eqref{Keller-Segel-eq0} (resp. \eqref{Keller-Segel-bounded-domain}), the term $\Delta u$ describes  the movement of the biological cells following random walk, which suggests that the cells move randomly from the places with higher cell density to the places with lower cell
density; the term $-\nabla \cdot (\chi u \nabla v)$ reflects the movement of the biological cells subject to the chemical substance, which suggests that the cells move from the places with lower chemical substance density to the places with higher chemical substance density when $\chi>0$;  the logistic term $u(a-bu)$ governs the local dynamics of the cell population; and the second equation in \eqref{Keller-Segel-eq0} (resp. \eqref{Keller-Segel-bounded-domain}) indicates that the chemical substance diffuses via random walk very quickly and is produced
over time by the biological cells.  
Both mathematically and biologically, it is important to investigate what dynamical scenarios  may be produced  by the interaction of these factors in  \eqref{Keller-Segel-eq0} (resp. \eqref{Keller-Segel-bounded-domain}), or how the chemotaxis affects the dynamics in \eqref{Keller-Segel-eq0} (resp. \eqref{Keller-Segel-bounded-domain}).

Observe that, in the absence of chemotaxis (i.e. $\chi=0$),  the dynamics of \eqref{Keller-Segel-eq0} is governed by  \eqref{fisher-kpp}, and the dynamics of
\eqref{Keller-Segel-bounded-domain} is governed by
\begin{equation}
\label{fisher-kpp-bounded-domain}
\begin{cases}
u_t=\Delta u+u(a-bu),\quad x\in\Omega\cr
\frac{\p u}{\p n}=0,\quad x\in\Omega.
\end{cases}
\end{equation}
It is known that the asymptotic dynamics of \eqref{fisher-kpp-bounded-domain} is completely determined by the logistic term $u(a-bu)$. More precisely, it is known that
$u\equiv \frac{a}{b}$ is the unique steady-state solution of \eqref{fisher-kpp-bounded-domain} ($\frac{a}{b}$ is
referred to as the carrying capacity of the system), and
for any given positive initial distribution $u_0(\cdot)\in C(\bar\Omega)$ ($u_0(x)\ge 0$ and $u_0(x)\not \equiv 0$),
the solution $u(t,x;u_0)$ of \eqref{fisher-kpp-bounded-domain} with $u(0,x;u_0)=u_0(x)$ converges to $\frac{a}{b}$
(i.e., the limiting distribution is $\frac{a}{b}$). The dynamics of \eqref{fisher-kpp} is recalled in the above. Among others, it is
known that if initially the population is inhabited in a bounded region, it spreads into the whole space at the speed $c_0^*=2\sqrt a$. Moreover, $c_0^*=2\sqrt a$ is the minimal wave speed of traveling wave solutions of \eqref{fisher-kpp} connecting $\frac{a}{b}$ and $0$.

Many authors have been studying possible dynamical scenarios induced from the chemotaxis in various chemotaxis models through the study of such models in bounded domains.
Very rich dynamics has been observed. For example,     when
 $a=b=0$, finite time blow-up  may occur in \eqref{Keller-Segel-bounded-domain} if either $N=2$ and the total initial population mass is large enough, or $N\geq 3$ (see \cite{BBTW,Hor,DirkandWinkler,win_arxiv}, and the references therein).
When  $a$ and $b$ are positive constants and $\lambda=\mu=1$,  if either $N\le 2$ or $b>\frac{N-2}{N}\chi$, then for any 
positive initial data $u_0\in C(\bar\Omega)$, \eqref{Keller-Segel-bounded-domain}   possesses a  unique bounded global classical solution $(u(x,t;u_0),v(x,t;u_0))$ with
 $u(x,0;u_0)=u_0(x)$, and hence the finite time blow-up phenomena in \eqref{Keller-Segel-bounded-domain}  is suppressed  to some  extent.
Moreover, if  $b>2\chi$, then $(\frac{a}{b},\frac{a}{b})$ is the unique positive steady-state solution
of  \eqref{Keller-Segel-bounded-domain} (with $\lambda=\mu=1$), and for any positive initial distribution $u_0\in C(\bar\Omega)$ ($u_0(x)\not \equiv 0$),
 $$
 \lim_{t\to\infty} \big[ \|u(\cdot;t;u_0)-\frac{a}{b}\|_{L^\infty(\Omega)}+\|v(\cdot,t;u_0)-\frac{a}{b}\|_{L^\infty(\Omega)}\big]=0
 $$
 (hence the chemotaxis does not affect the limiting distribution).
 But if $b<2\chi$, there may be more than one positive steady-state solutions of \eqref{Keller-Segel-bounded-domain} (see \cite{TeWi2}).

In the current paper, we investigate  the dynamics of chemotaxis models through the study of such models in unbounded domains,
in particular, through the study of chemotaxis models in  the whole space from the angle of spreading speed.
The first two authors of this paper have done a series of works in this direction and obtained several fundamental results.
For example,  as it is mentioned before, in the papers by \cite{SaSh1,SaSh2},   we  studied the dynamics of \eqref{Keller-Segel-eq0} from the angle of spreading speed,
and observed that if the population is initially inhabited in a bounded region, it spreads into the whole space as time evolves. Moreover, some 
explicit  lower bound $c_{\rm low}^{*}(\chi,a,b,\lambda,\mu)$ and upper bound $c_{\rm up}^*(\chi,a,b,\lambda,\mu)$ of the spreading speeds are obtained, and it is proved that both the lower and upper bounds converge to the spreading speed  $c_0^*=2\sqrt a$ of \eqref{fisher-kpp} as $\chi\to 0$.
But it  left as an open question whether or when  the solution of \eqref{Keller-Segel-eq0} with compactly supported initial function spreads at the speed $c_0^*=2\sqrt a$. 

The above open question is studied in the current paper and some satisfactory   answers are obtained. 
For example, assume $b>\chi\mu$ (i.e. the logistic damping constant $b$ is larger than the product of the chemotaxis sensitivity $\chi$ and
the production rate $\mu$ of the chemical substance produced by the biological cells).  It is proved in this paper that the spreading speed $c_0^*=2\sqrt a$ of \eqref{fisher-kpp} is always a lower bound of the spreading speeds of solutions of \eqref{Keller-Segel-eq0} with compactly supported or half space supported  initial distributions  (see Theorem \ref{spreading-speed-upper-bound} (2) and Theorem \ref{spreading-speed-upper-bound-of-front-like-initials}(2)), which implies that the chemotaxis does not slow down the spreading of population  with compactly supported or half space supported  initial distributions.
 If, in addition, $\lambda\ge a$ (i.e.  the degradation rate $\lambda$ of the  chemo-attractant is larger than or equal to the intrinsic growth
 rate $a$ of the biological cells),  it is proved that the spreading speed $c_0^*=2\sqrt a$ of \eqref{fisher-kpp} is also an upper bound
 of the spreading speeds of solutions of \eqref{Keller-Segel-eq0} with compactly supported or half space supported  initial distributions
 (see Theorem \ref{spreading-speed-upper-bound} (1) and Theorem \ref{spreading-speed-upper-bound-of-front-like-initials}(1)).
 Hence if $b>\chi\mu$ and $\lambda\ge a$, then the chemotaxis neither slows down nor speeds up the spreading of 
the cell population.
In general, we conjecture   that the  presence of the chemo-attractant does not increase the maximal spreading speed. 
 While our results do not settle completely the question of  the exact spreading speeds  of solutions to \eqref{Keller-Segel-eq0}, they provide a  satisfactory answer for some range of the parameters.  It would be of great mathematical interest to know whether the presence of the chemical really affects the spreading speed in general. We plan to devote some of our future works to address this question.
 
  In the paper by \cite{SaSh2}, we studied traveling wave solutions of \eqref{Keller-Segel-eq} connecting $(\frac{a}{b}, \frac{\mu}{\lambda}\frac{a}{b})$ and $(0,0)$, and found 
   a constant $c^*(\chi, a, b, \lambda, \mu)(>c^*_0=2\sqrt a)$ satisfying that,  for any $c>c^*(\chi, a, b, \lambda, \mu)$, \eqref{Keller-Segel-eq} has a traveling wave solution  with speed $c$ connecting the constant solutions $(\frac{a}{b}, \frac{\mu}{\lambda}\frac{a}{b})$ and $(0,0)$. Moreover, it is proved that $c^*(\chi, a, b, \lambda, \mu) \to c^*_0=2\sqrt a$ as $\chi\to 0$.
    It  left as an open question whether \eqref{Keller-Segel-eq} has a minimal wave speed $c_{\min}$  and if so, whether $c_{\min}=c_0^*$.  This open question is also studied in the current paper and some satisfactory  answers are obtained.
    For example, it is proved in this paper that, if $b>2\chi\mu$ and $\lambda\ge a$, then $c_0^*=2\sqrt a$ is the minimal wave speed
    of traveling wave solutions of \eqref{Keller-Segel-eq} connecting $(\frac{a}{b}, \frac{\mu}{\lambda}\frac{a}{b})$ and $(0,0)$.
    But when one of the conditions $b>2\chi\mu$ and $\lambda\ge a$ fails, the question of the existence of the minimum wave speed of \eqref{Keller-Segel-eq0} connecting the two constant equilibria remains an open problem.

 It should be pointed out  that the techniques developed in this work easily extend to the repulsion-chemotaxis models, that is $\chi<0$. Hence analogous results  can be obtained in such setting.

\medskip

The rest of the paper is organized as follows. In section 2, we present  some preliminary  results to be used in the proofs of our main results. In section 3, we study the spreading speed of solutions and prove Theorems \ref{spreading-speed-upper-bound}--\ref{Main-tm-2}. Finally in section 3, we study the existence and nonexistence of traveling wave solutions and prove Theorem \ref{Existence of traveling}.

\section{Preliminary lemmas}

In this section, we prove some lemmas to be used in the proofs of the main results in the later sections.

{For every $u\in C^b_{\rm unif}(\R)$, let
\begin{equation}\label{psi-definition}
\Psi(x;u)=\mu\int_{0}^{\infty}\int_{\R}\frac{e^{-\lambda s}e^{-\frac{|y-x|^2}{4s}}}{\sqrt{4\pi s}}u(y)dyds.
\end{equation}
It is well known that $\Psi(x;u)\in C^2_{\rm unif}(\R)$ and solves the elliptic equation
$$
\frac{d^2}{dx^2}\Psi(x;u)-\lambda\Psi(x;u)+\mu u=0.
$$

\begin{lem}
\label{new-lm1} It holds that
\begin{equation}\label{psi-definition-eq2}
\Psi(x;u)=\frac{\mu}{2\sqrt{\lambda}}\int_{\R}e^{-\sqrt{\lambda}|x-y|}u(y)dy
\end{equation}
and
\begin{equation}\label{space-derivative-of-psi-1}
\frac{d}{dx}\Psi(x;u)=-\frac{\mu}{2}e^{-\sqrt{\lambda }x}\int_{-\infty}^xe^{\sqrt{\lambda}y}u(y)dy + \frac{\mu}{2}e^{\sqrt{\lambda}x}\int_{x}^{\infty}e^{-\sqrt{\lambda}y}u(y)dy.
\end{equation}
\end{lem}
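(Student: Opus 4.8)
The plan is to establish \eqref{psi-definition-eq2} by evaluating the inner $s$-integral in the definition \eqref{psi-definition}, and then to deduce \eqref{space-derivative-of-psi-1} by elementary differentiation. First I would justify interchanging the order of integration in \eqref{psi-definition}. Since the Gaussian kernel integrates to one in $y$ and $\int_0^\infty e^{-\lambda s}\,ds<\infty$, Tonelli's theorem applies to the nonnegative integrand obtained by replacing $u$ with $|u|$, which shows the double integral converges absolutely and legitimizes writing
$$
\Psi(x;u)=\mu\int_{\R}u(y)\Big(\int_0^\infty \frac{e^{-\lambda s}\,e^{-\frac{|y-x|^2}{4s}}}{\sqrt{4\pi s}}\,ds\Big)dy.
$$
The crux is the one-dimensional integral in $s$. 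Setting $r=|x-y|$, I would invoke the classical identity $\int_0^\infty s^{-1/2}e^{-as-b/s}\,ds=\sqrt{\pi/a}\,e^{-2\sqrt{ab}}$ with $a=\lambda$ and $b=r^2/4$; then $2\sqrt{ab}=\sqrt{\lambda}\,r$, so the inner integral equals $\frac{1}{2\sqrt{\lambda}}e^{-\sqrt{\lambda}|x-y|}$, and substituting back yields \eqref{psi-definition-eq2}.

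An equivalent and perhaps cleaner route to \eqref{psi-definition-eq2} avoids the explicit $s$-integral altogether: the right-hand side of \eqref{psi-definition-eq2} is the convolution of $u$ with the Green's function $\frac{1}{2\sqrt{\lambda}}e^{-\sqrt{\lambda}|\cdot|}$ of $-\frac{d^2}{dx^2}+\lambda$, hence it is a bounded solution of $w''-\lambda w+\mu u=0$. Since the homogeneous equation $w''-\lambda w=0$ has only the trivial bounded solution $w\equiv 0$ (its general solution is a combination of $e^{\pm\sqrt{\lambda}x}$, and boundedness forces both coefficients to vanish), this convolution must coincide with $\Psi(\cdot;u)$, which the excerpt already identifies as a bounded solution of the same equation.

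For \eqref{space-derivative-of-psi-1} I would split the kernel in \eqref{psi-definition-eq2} according to the sign of $x-y$, using $|x-y|=x-y$ for $y<x$ and $|x-y|=y-x$ for $y>x$, to obtain
$$
\Psi(x;u)=\frac{\mu}{2\sqrt{\lambda}}\,e^{-\sqrt{\lambda}x}\int_{-\infty}^x e^{\sqrt{\lambda}y}u(y)\,dy+\frac{\mu}{2\sqrt{\lambda}}\,e^{\sqrt{\lambda}x}\int_{x}^{\infty}e^{-\sqrt{\lambda}y}u(y)\,dy.
$$
Both integrals converge absolutely because $e^{\pm\sqrt{\lambda}y}$ decays at the relevant infinity while $u$ is bounded. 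Differentiating each product by the product rule and the fundamental theorem of calculus produces two boundary contributions $\pm u(x)$ (here the continuity of $u\in C^b_{\rm unif}(\R)$ is used), which cancel exactly, leaving the right-hand side of \eqref{space-derivative-of-psi-1}. The computation is routine; the only genuinely technical points are the evaluation of the Laplace-type $s$-integral and the justification of the interchange of integrals and of differentiation under the integral sign, and both are controlled entirely by the boundedness of $u$ together with the decay of the exponential weights.
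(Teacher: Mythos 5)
Your proposal is correct, and its main line --- interchange the order of integration, then evaluate the inner $s$-integral in closed form --- is exactly the paper's argument; the difference lies in how the key integral is handled and in your added alternative. You invoke the classical formula $\int_0^\infty s^{-1/2}e^{-as-b/s}\,ds=\sqrt{\pi/a}\,e^{-2\sqrt{ab}}$ as known, whereas the paper proves the normalized identity $\int_0^\infty \frac{e^{-\beta^2/(4s)-s}}{\sqrt{4\pi s}}\,ds=\frac{e^{-\beta}}{2}$ from scratch: it writes $\frac{1}{1+s^2}=\int_0^\infty e^{-(1+s^2)\tau}\,d\tau$, computes $\frac{1}{2\pi}\int_{-\infty}^{\infty}\frac{e^{i\beta s}}{1+s^2}\,ds=\frac{e^{-\beta}}{2}$ by the Residue Theorem, exchanges the integrals and completes a square, and only then rescales $\tau=\lambda s$ with $\beta=\sqrt{\lambda}|x-y|$, just as you do. So on that route the two proofs differ only in whether the Laplace-type identity is cited or rederived; either is legitimate, though the paper's choice keeps the argument self-contained. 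Your second route to \eqref{psi-definition-eq2} --- recognizing the right-hand side as the convolution of $u$ with the Green's function $\frac{1}{2\sqrt{\lambda}}e^{-\sqrt{\lambda}|\cdot|}$ of $-\frac{d^2}{dx^2}+\lambda$ and appealing to uniqueness of bounded solutions of $w''-\lambda w+\mu u=0$ (the difference of two bounded solutions solves $w''-\lambda w=0$, and boundedness kills both exponentials) --- is genuinely different from the paper and arguably cleaner: it avoids all special-function computation, at the cost of leaning on the regularity statement, made just before the lemma, that $\Psi(\cdot;u)\in C^2_{\rm unif}(\R)$ is a bounded solution of that ODE. For \eqref{space-derivative-of-psi-1}, the paper splits the kernel at $y=x$ and leaves the rest as a direct calculation; your version is that same computation written out, correctly noting that the two boundary terms $\pm\frac{\mu}{2\sqrt{\lambda}}u(x)$ produced by the fundamental theorem of calculus cancel.
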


\begin{proof}
First, observe that the following identity holds.
\begin{equation}\label{nnew-eq1}
\int_{0}^{\infty}\frac{e^{-\frac{\beta^2}{ 4s}-s }}{\sqrt{4 \pi s}}ds=\frac{e^{-\beta}}{2}, \quad \forall \beta >0.
\end{equation}
 Indeed, note that by Residue Theorem $\frac{1}{2\pi}\int_{-\infty}^{\infty}\frac{e^{i\beta s}}{1+s^2}ds=\frac{e^{-\beta}}{2}$,  and
\begin{align*}
 \int_{0}^{\infty}e^{-(1+s^2)\tau} d\tau&=\frac{1}{1+s^2}.
 \end{align*}
 Hence,
\begin{align*}
 \frac{e^{-\beta}}{2}=\frac{1}{2\pi}\int_{-\infty}^{\infty}\int_{0}^{\infty}e^{-(1+s^2)\tau} e^{i\beta s}d\tau ds&=\frac{1}{2\pi}\int_{0}^{\infty}e^{-\tau}\int_{-\infty}^{\infty}e^{-\tau(s^2-\frac{i\beta}{\tau}s)}dsd\tau\cr
&=\frac{1}{2\pi}\int_{0}^{\infty}e^{-\tau}e^{-\frac{\beta ^2}{4\tau}}(\int_{-\infty}^{\infty}e^{-\tau(s-\frac{i\beta}{2\tau})^2}ds)d\tau\cr
&=\int_{0}^{\infty}\frac{e^{-\frac{\beta^2}{ 4\tau}-\tau }}{\sqrt{4 \pi \tau}}d\tau.
 \end{align*}

Next using Fubini's Theorem, one can exchange the order of integration  in \eqref{psi-definition} to obtain
\begin{align}\label{nnew-eq2}
\Psi(x;u)=\mu \int_{0}^{\infty}\int_{\R}\frac{e^{-\lambda s}e^{-\frac{|x-y|^2}{4s}}}{\left[4\pi s\right]^{\frac{1}{2}}}u(y)dyds=\mu \int_{\R}\left[\int_{0}^{\infty}\frac{e^{-\frac{|x-y|^2}{4s}-\lambda s}}{\sqrt{4\pi s}}ds\right]u(y)dy.
\end{align}
By the change of variable $\tau=\lambda s$ and taking $\beta =\sqrt{\lambda}|x-y|$, it follows from \eqref{nnew-eq1} that
$$
\int_{0}^{\infty}\frac{e^{-\frac{|x-y|^2}{4s}-\lambda s}}{\sqrt{4\pi s}}ds=\frac{1}{\sqrt{\lambda}}\int_{0}^{\infty}\frac{e^{-\frac{\beta^2}{ 4\tau}-\tau }}{\sqrt{4 \pi \tau}}d{\tau} =\frac{1 }{2\sqrt{\lambda}}e^{-\sqrt{\lambda}|x-y|}.
$$
This together with   \eqref{nnew-eq2} implies that
$$
\Psi(x;u)=\mu \int_{0}^{\infty}\int_{\R}\frac{e^{-\lambda s}e^{-\frac{|x-y|^2}{4s}}}{\left[4\pi s\right]^{\frac{1}{2}}}u(y)dyds=\frac{\mu}{2\sqrt{\lambda}}\int_{\R}e^{-\sqrt{\lambda}|x-y|}u(y)dy.
$$
Thus \eqref{psi-definition-eq2} holds.

Now, by  \eqref{psi-definition-eq2},
$$
\Psi(x;u)=\frac{\mu}{2\sqrt{\lambda}}\int_{-\infty}^xe^{-\sqrt{\lambda}(x-y)}u(y)dy+\frac{\mu}{2\sqrt{\lambda}}\int_{x}^\infty e^{-\sqrt{\lambda}(y-x)}u(y)dy.
$$
\eqref{space-derivative-of-psi-1} then follows from a direction calculation.
\end{proof}

\begin{lem}\label{lem-001}
For every $u\in C^b_{\rm unif}(\R)$, $u(x)\ge 0$, it holds that
\begin{equation}\label{estimate-on-space-derivative-1}
\left| \frac{d}{dx}\Psi(x;u)\right|\leq \sqrt{\lambda}\Psi(x;u),\ \quad  \forall\ x\in\R.
\end{equation}
In particular, for every solution $(u(t,x;u_0),v(t,x;u_0))$ of \eqref{Keller-Segel-eq} with $u_0\geq 0$ it holds that
\begin{equation*}
|v_{x}(t,x;u_0)|\leq \sqrt{\lambda}v(t,x;u_0),\quad \forall\ x\in\R, \ \forall\ t>0.
\end{equation*}
Furthermore,  if
\begin{equation}
\frac{(\kappa-\sqrt{\lambda})_{+}}{(\kappa +\sqrt{\lambda})}\leq \frac{2(b-\chi\mu)}{\chi\mu},
\end{equation} it holds that
\begin{equation}\label{estimate-on-space-derivative-2}
\chi\kappa\Psi_x(t,x;u)-\chi\lambda\Psi(t,x;u)-(b-\chi\mu)Me^{-\kappa x}\leq 0, \quad \forall x\in\R,
\end{equation}
whenever $0\leq u(x)\leq Me^{-\kappa x}$ for some positive real numbers $\kappa>0$ and $M>0$.
\end{lem}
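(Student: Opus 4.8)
The plan is to read everything off the explicit representations in Lemma \ref{new-lm1}. I would write $\Psi_x(x;u) = I_+(x) - I_-(x)$, where
$$I_-(x) = \frac{\mu}{2}e^{-\sqrt{\lambda}x}\int_{-\infty}^x e^{\sqrt{\lambda}y}u(y)\,dy, \qquad I_+(x) = \frac{\mu}{2}e^{\sqrt{\lambda}x}\int_x^{\infty} e^{-\sqrt{\lambda}y}u(y)\,dy,$$
which is precisely \eqref{space-derivative-of-psi-1}. Splitting the integral in \eqref{psi-definition-eq2} at $y=x$ and using $|x-y|=x-y$ on $(-\infty,x)$ and $|x-y|=y-x$ on $(x,\infty)$ gives $\sqrt{\lambda}\,\Psi(x;u)=I_-(x)+I_+(x)$. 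Since $u\ge 0$, both $I_-(x)$ and $I_+(x)$ are nonnegative, so $|\Psi_x(x;u)|=|I_+(x)-I_-(x)|\le I_+(x)+I_-(x)=\sqrt{\lambda}\,\Psi(x;u)$, which is \eqref{estimate-on-space-derivative-1}.

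For the statement about $v$, I would note that for fixed $t$ the function $v(t,\cdot;u_0)$ is a bounded solution of $v_{xx}-\lambda v+\mu u(t,\cdot;u_0)=0$, while $\Psi(\cdot;u(t,\cdot;u_0))$ is the unique bounded solution of this elliptic equation; hence $v(t,x;u_0)=\Psi(x;u(t,\cdot;u_0))$, and the pointwise bound $|v_x|\le\sqrt{\lambda}\,v$ is immediate from \eqref{estimate-on-space-derivative-1}.

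The substantive part is \eqref{estimate-on-space-derivative-2}. Using the same decomposition,
$$\chi\kappa\,\Psi_x(x;u)-\chi\lambda\,\Psi(x;u)=\chi(\kappa-\sqrt{\lambda})\,I_+(x)-\chi(\kappa+\sqrt{\lambda})\,I_-(x).$$
Because $\kappa+\sqrt{\lambda}>0$ and $I_-(x)\ge 0$, the last term is $\le 0$ and can be discarded, leaving $\chi(\kappa-\sqrt{\lambda})I_+(x)$ to control. If $\kappa\le\sqrt{\lambda}$ this is $\le 0\le (b-\chi\mu)Me^{-\kappa x}$ and we are done, the hypothesis being vacuous since $(\kappa-\sqrt{\lambda})_+=0$. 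If $\kappa>\sqrt{\lambda}$, I would insert $u(y)\le Me^{-\kappa y}$ into $I_+$ and compute
$$I_+(x)\le\frac{\mu M}{2}e^{\sqrt{\lambda}x}\int_x^{\infty}e^{-(\sqrt{\lambda}+\kappa)y}\,dy=\frac{\mu M}{2(\kappa+\sqrt{\lambda})}e^{-\kappa x}.$$
Then $\chi(\kappa-\sqrt{\lambda})I_+(x)\le\tfrac{\chi\mu M}{2}\,\tfrac{\kappa-\sqrt{\lambda}}{\kappa+\sqrt{\lambda}}\,e^{-\kappa x}$, and the assumed inequality $\tfrac{(\kappa-\sqrt{\lambda})_+}{\kappa+\sqrt{\lambda}}\le\tfrac{2(b-\chi\mu)}{\chi\mu}$ turns the right-hand side into exactly $(b-\chi\mu)Me^{-\kappa x}$, giving \eqref{estimate-on-space-derivative-2}.

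The only place requiring care — and the reason the precise constant in the hypothesis appears — is the second case: one must both discard the favorable $I_-$ term and estimate $I_+$ by the sharp exponential integral, after which the factor $\tfrac{\kappa-\sqrt{\lambda}}{\kappa+\sqrt{\lambda}}$ emerges on its own and matches the assumption exactly. Everything else is a direct consequence of Lemma \ref{new-lm1}.
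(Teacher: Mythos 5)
Your proof is correct and follows essentially the same route as the paper: both read $\Psi_x=I_+-I_-$ and $\sqrt{\lambda}\,\Psi=I_++I_-$ off Lemma \ref{new-lm1}, get \eqref{estimate-on-space-derivative-1} by the triangle inequality, and get \eqref{estimate-on-space-derivative-2} by discarding the $-\chi(\kappa+\sqrt{\lambda})I_-$ term, inserting $u(y)\le Me^{-\kappa y}$ into $I_+$, and invoking the hypothesis (the paper merges your two cases into one line via the $(\kappa-\sqrt{\lambda})_+$ notation, and note that when $\kappa\le\sqrt{\lambda}$ the hypothesis is not quite vacuous—it still forces $b\ge\chi\mu$, which is exactly what your inequality $0\le(b-\chi\mu)Me^{-\kappa x}$ uses).
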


 \begin{proof}
 First, by \eqref{psi-definition-eq2} and \eqref{space-derivative-of-psi-1}, we have
 \begin{align*}
|\frac{d}{dx}\Psi(x;u)|&=|-\frac{\mu}{2}e^{-\sqrt{\lambda }x}\int_{-\infty}^xe^{\sqrt{\lambda}y}u(y)dy + \frac{\mu}{2}e^{\sqrt{\lambda}x}\int_{x}^{\infty}e^{-\sqrt{\lambda}y}u(y)dy|\\
&\le \frac{\mu}{2}\int_{\R} e^{-\sqrt{\lambda}|y-x|}u(y)dy\\
&=\sqrt \lambda \Psi(x;u).
\end{align*}
 This implies \eqref{estimate-on-space-derivative-1}.

Next, we prove \eqref{estimate-on-space-derivative-2}. It follows from \eqref{psi-definition} and \eqref{space-derivative-of-psi-1} that
 \begin{align*}
&\chi\kappa\Psi_x(t,x;u)-\chi\lambda\Psi(t,x;u)\cr
=&- \frac{\chi \mu}{2}(\kappa+\sqrt{\lambda})e^{-\sqrt{\lambda }x}\int_{-\infty}^xe^{\sqrt{\lambda}y}u(y)dy    -\frac{\chi \mu}{2}(\sqrt{\lambda}-\kappa)e^{\sqrt{\lambda}x}\int_x^{\infty}e^{-\sqrt{\lambda}y}u(y)dy\cr
\leq &  \frac{\chi \mu}{2}(\kappa-\sqrt{\lambda})_{+}e^{\sqrt{\lambda}x}\int_x^{\infty}e^{-\sqrt{\lambda}y}u(y)dy\cr
\leq& \frac{\chi \mu M}{2}(\kappa-\sqrt{\lambda})_{+}e^{\sqrt{\lambda}x}\int_x^{\infty}e^{-\sqrt{\lambda}y}e^{-\kappa y}dy\cr
=& \frac{\chi \mu M}{2(\kappa +\sqrt{\lambda})}(\kappa-\sqrt{\lambda})_{+}e^{-\kappa x}.
 \end{align*}
 Hence, \eqref{estimate-on-space-derivative-2} follows.
\end{proof}

\begin{lem}
\label{lem-001-1}
 Assume that $b>\chi\mu$. Let $c_{\kappa}=\frac{\kappa^2+a}{\kappa}$ with $ 0<\kappa\leq \sqrt{a}$ satisfying
$$\frac{(\kappa-\sqrt{\lambda})_{+}}{(\kappa +\sqrt{\lambda})}\leq \frac{2(b-\chi\mu)}{\chi\mu}.$$ The following hold.
\begin{enumerate}
\item[(i)]
 For any $u_0\ge 0$ with  nonempty compact support
 and  any $M\gg \frac{a}{b-\chi\mu}$  satisfying
\begin{equation}\label{u_0-cond1}
\max\{u_0(x),u_0(-x)\}\leq U^+(x):=\min\{M,Me^{-\kappa x}\},\quad \forall\ x\in\R,
\end{equation}
there holds
\begin{equation}
\label{new-eq1}
u(t,x;u_0)\leq M e^{-\kappa(|x|-c_{\kappa}t)}, \quad\forall\ x\in\R,\ t\geq 0.
\end{equation}
\item[(ii)]
 For any $u_0\in C^b_{\rm unif}(\R)$, $u_0(x)>0 $,   and  any  $M\gg \frac{a}{b-\chi\mu}$  satisfying
\begin{equation}\label{u_0-cond2}
u_0(x)\leq U^+(x):=\min\{M,Me^{-\kappa x}\},\quad \forall\ x\in\R,
\end{equation}
there holds
\begin{equation}
\label{new-eq1-2}
u(t,x;u_0)\leq M e^{-\kappa(x-c_{\kappa}t)}, \quad\forall\ x\in\R,\ t\geq 0.
\end{equation}

\end{enumerate}
\end{lem}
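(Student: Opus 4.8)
\emph{Proof proposal.} The plan is to eliminate $v$ and view \eqref{Keller-Segel-eq} as a single non-local parabolic equation for $u$, then dominate $u$ by the function in the statement regarded as a supersolution. Writing $v(t,x)=\Psi(x;u(t,\cdot))$ and using the elliptic identity $\Psi_{xx}=\lambda\Psi-\mu u$ to replace $v_{xx}$, the first equation of \eqref{Keller-Segel-eq} becomes
\begin{equation}\label{pp-reduced}
u_t=u_{xx}-\chi u_x\Psi_x(x;u)-\chi\lambda\, u\,\Psi(x;u)+a u-(b-\chi\mu)u^2 .
\end{equation}
For part (ii) I take as comparison function the truncated exponential $\bar u(t,x)=\min\{M,\,Me^{-\kappa(x-c_\kappa t)}\}$, and for part (i) its symmetrization $\min\{M,\,Me^{-\kappa(|x|-c_\kappa t)}\}$; in both cases I aim to prove $u\le\bar u$ by a maximum-principle argument applied to \eqref{pp-reduced}. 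Global existence of $u$ on $[0,\infty)$ is guaranteed by the Proposition since $\chi\mu<b$.

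The core is to check that $\bar u$ behaves like a supersolution of \eqref{pp-reduced}. On the exponential branch one has $\bar u_x=-\kappa\bar u$ and $\bar u_t-\bar u_{xx}=(\kappa c_\kappa-\kappa^2)\bar u=a\bar u$, which is exactly why $c_\kappa=(\kappa^2+a)/\kappa$ is chosen. Substituting the touching relations into \eqref{pp-reduced} and dividing by $\bar u>0$, the supersolution inequality reduces precisely to
\begin{equation}\label{pp-key}
\chi\kappa\,\Psi_x(x;u)-\chi\lambda\,\Psi(x;u)-(b-\chi\mu)\bar u\le 0 ,
\end{equation}
which is the content of \eqref{estimate-on-space-derivative-2} in Lemma \ref{lem-001}: as long as $u(t,\cdot)\le\bar u(t,\cdot)\le \tilde M e^{-\kappa x}$ with $\tilde M=Me^{\kappa c_\kappa t}$, the hypothesis $0\le u\le \tilde M e^{-\kappa x}$ holds and $\tilde M e^{-\kappa x}=\bar u$ on that branch. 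On the flat branch $\bar u\equiv M$ the inequality reduces, using $\Psi\ge 0$ from \eqref{psi-definition-eq2} and $M\ge a/(b-\chi\mu)$, to $-\chi\lambda M\Psi+aM-(b-\chi\mu)M^2\le 0$, which holds; the truncation corners at $|x|=c_\kappa t$ are concave (downward) kinks and hence harmless for a supersolution. Note, however, that \eqref{pp-key} is only a \emph{non-strict} inequality, so $\bar u$ is a borderline supersolution.

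Because of this borderline behavior, and because the drift in \eqref{pp-reduced} is non-local in $u$, I cannot invoke an off-the-shelf comparison theorem, and instead run a first-touching-time argument on the strict supersolution $\bar u_\varepsilon:=e^{\varepsilon t}\bar u$, $\varepsilon>0$. Suppose $t_0$ is the first time with a point $x_0$ where $u(t_0,x_0)=\bar u_\varepsilon(t_0,x_0)$ while $u\le\bar u_\varepsilon$ for $t\le t_0$. Then at $(t_0,x_0)$ one has $u_x=\partial_x\bar u_\varepsilon$, $u_{xx}\le\partial_{xx}\bar u_\varepsilon$, $u_t\ge\partial_t\bar u_\varepsilon$, and the ordering $u(t_0,\cdot)\le\bar u_\varepsilon(t_0,\cdot)$ lets \eqref{estimate-on-space-derivative-2} apply at $x_0$ (on the exponential branch, after replacing $M$ by $Me^{(\varepsilon+\kappa c_\kappa)t_0}$). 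Using $\partial_t\bar u_\varepsilon-\partial_{xx}\bar u_\varepsilon=(a+\varepsilon)\bar u_\varepsilon$ and plugging the touching relations into \eqref{pp-reduced}, one obtains after dividing by $\bar u_\varepsilon$ the chain $\varepsilon\le\chi\kappa\Psi_x-\chi\lambda\Psi-(b-\chi\mu)\bar u_\varepsilon\le 0$, a contradiction; a touching in the flat branch is excluded the same way via $M\ge a/(b-\chi\mu)$. Hence $u\le\bar u_\varepsilon$ for every $\varepsilon>0$, and letting $\varepsilon\downarrow 0$ gives \eqref{new-eq1-2}. For part (i) the identical scheme applies to the symmetric comparison function: on $x<-c_\kappa t$ one uses the reflection of \eqref{estimate-on-space-derivative-2}, valid because \eqref{Keller-Segel-eq} is invariant under $x\mapsto -x$, under which $\Psi(x;u(-\cdot))=\Psi(-x;u)$, so that the estimate $-\chi\kappa\Psi_x-\chi\lambda\Psi\le(b-\chi\mu)Me^{\kappa x}$ holds whenever $0\le u(x)\le Me^{\kappa x}$.

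The main obstacle is precisely the non-local, nonlinear drift $\chi u_x\Psi_x(x;u)$: it rules out a direct comparison principle and forces the touching-time argument, whose crucial point is that the monotonicity of $u\mapsto\Psi(\cdot;u)$ together with the ordering $u\le\bar u_\varepsilon$ \emph{up to} $t_0$ allows Lemma \ref{lem-001} to absorb both chemotactic terms into $(b-\chi\mu)\bar u_\varepsilon$. Two subsidiary points must be dispatched: the borderline nature of \eqref{pp-key}, handled by the $e^{\varepsilon t}$ regularization above, and the attainment of $\sup_x\,(u-\bar u_\varepsilon)$ at a finite point. The latter uses that $u(t,\cdot)$ is bounded (so no touching as $x\to-\infty$, where $\bar u_\varepsilon\to\infty$) together with a preliminary spatial-decay estimate for $u(t,\cdot)$ — Gaussian decay from the compactly supported data in (i), and an $e^{-\kappa x}$-type bound propagated from the majorization $u_0\le Me^{-\kappa x}$ in (ii) — guaranteeing that the supremum does not escape to $x=+\infty$, where $u$ and $\bar u_\varepsilon$ both vanish.
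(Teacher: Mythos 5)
Your core computation is exactly the one the paper's proof rests on: everything reduces to Lemma \ref{lem-001}, i.e.\ inequality \eqref{estimate-on-space-derivative-2}, which makes $Me^{-\kappa(x-c_\kappa t)}$ act as a (borderline) supersolution \emph{provided} the solution itself is already dominated by that exponential. Your route, however, is genuinely different: you compare directly against the nonlocal equation via a first-touching-time argument with the $e^{\varepsilon t}$ regularization, whereas the paper freezes $u$ in the nonlocal terms, so that Lemma \ref{lem-001} applies at \emph{every} $(t,x)$ for $u\in\mathcal{E}^T$, applies the textbook comparison principle to the resulting local equation $\Phi_t=\mathcal{A}_u(\Phi)$ to show $\mathcal{E}^T$ is invariant, and then uses Schauder's fixed point theorem plus uniqueness of solutions to identify the actual solution with a fixed point in $\mathcal{E}^T$. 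This is not a cosmetic difference: the conditional nature of \eqref{estimate-on-space-derivative-2} (its hypothesis is global in $x$ at the time in question, and is precisely the ordering you are trying to propagate) creates a circularity that the fixed-point device is designed to break, and your touching argument runs into it at the two standard weak points of comparison arguments on $\R$.

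Concretely, there are two gaps. First, the escape of the touching point to $x=+\infty$: your proposed remedy for part (ii), ``an $e^{-\kappa x}$-type bound propagated from the majorization $u_0\le Me^{-\kappa x}$,'' is circular, since such a bound (with controlled time growth) is exactly \eqref{new-eq1-2}. A non-circular crude bound is available — comparing with a linear equation whose drift is bounded by $\|\chi v_x\|_\infty\le \chi\mu M/\sqrt{\lambda}$ gives $u\le Me^{-\kappa x+\gamma t}$ with $\gamma=\kappa^2+\kappa\|\chi v_x\|_\infty+a$ — but $\gamma$ exceeds $\kappa c_\kappa+\varepsilon$ in general, so this does not prevent violating points from running off to $+\infty$, where both $u$ and $\bar u_\varepsilon$ decay at the same exponential rate and only the prefactors compete. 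Second, the regularization $e^{\varepsilon t}$ does nothing at $t=0$, and \eqref{u_0-cond2} allows $u_0$ to coincide with $Me^{-\kappa x}$ on a whole half-line; then $u$ and $\bar u_\varepsilon$ touch at time zero on an unbounded set, the infimum of violation times can be $0$, and the inequality $u_t\ge\partial_t\bar u_\varepsilon$ — which needs ordering at \emph{earlier} times — is unavailable, while the local-in-space comparison one would like to use near an initial touching point again requires Lemma \ref{lem-001}, whose hypothesis is global in $x$. (A smaller slip: by your own definition $\bar u_\varepsilon$ does not tend to $\infty$ as $x\to-\infty$; it is capped at $Me^{\varepsilon t}$. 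That end is nevertheless harmless, since $u\le M<Me^{\varepsilon t}$ for $t>0$ by the a priori bound.)

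If you want to keep the direct approach rather than adopt the paper's, the cleanest repair is: prove part (i) (and its one-sided version) first, for compactly supported $u_0$, where a genuine Gaussian decay estimate for $u$ (comparison with a linear equation with bounded drift) confines the touching point to a compact set and gives strict separation from the barrier off the support at $t=0$; then deduce part (ii) by approximating $u_0$ from below by compactly supported data and passing to the limit using continuous dependence of solutions on initial data (locally uniform convergence suffices to transfer the pointwise bound \eqref{new-eq1-2}). As written, though, the proposal has a genuine hole precisely where the paper's fixed-point argument does its work.
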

}

\begin{proof} We shall only prove $(i)$ since $(ii)$ can be proved by the similar arguments.

For any given  $T>0$, let
$$
\mathcal{E}^T:=\{u\in C([0,T], C^b_{\rm unif}(\R))\, | \, u(0,\cdot)=u_0(\cdot)\ \text{and}\ 0\leq u(t,x)\leq {U}^{+}(x)\ \forall\ x\in\R, \ 0\leq t\leq T\}.
$$
For every $u\in\mathcal{E}^T$, let $\Phi(t,x;u)$ denote the solution of
\begin{equation*}
\begin{cases}
\Phi_t=\mathcal{A}_u(\Phi), \ x\in\R, 0<t\leq T,\cr
\Phi(0,x)=u_0(x),\quad x\in\R.
\end{cases}
\end{equation*}
where $$\mathcal{A}_u(\Phi):=\Phi_{xx}+(c_\kappa-\chi\Psi_x(\cdot,\cdot;u))\Phi_x+(a-\chi\lambda\Psi(\cdot,\cdot;u)-(b-\chi\mu)\Phi)\Phi$$
and $\Psi(x,t;u)$ is the solution of
$$
\Psi_{xx}-\lambda \Psi+\mu u=0,\quad x\in\R.
$$

Observe that
\begin{equation}\label{w-eq0}
\mathcal{A}_u(Me^{-\kappa x})=\left(\chi\kappa\Psi_x(t,x;u)-\chi\lambda\Psi(t,x;u)-(b-\chi\mu)Me^{-\kappa x}\right)Me^{-\kappa x}.
\end{equation}
It follows from Lemma \ref{lem-001} and \eqref{w-eq0} that
$$
\mathcal{A}_u(Me^{-\kappa x})\leq 0.
$$
Thus, since $u_0(x)\leq Me^{-\kappa x}$ for every $x\in\R$, by comparison principle for parabolic equations, we conclude that
$$
\Phi(t,x;u)\leq Me^{-\kappa x}, \quad \forall\ x\in\R, \forall\ t\geq 0.
$$
On the other hand, since $M\geq \frac{a}{b-\chi\mu}$, we have that
$$
\mathcal{A}_u(M)=(a-\chi\lambda \Psi(t,x;u)-(b-\chi\mu)M)M\leq 0.
$$
Thus, since $u_0(x)\leq M$ for every $x\in\R$, by comparison principle for parabolic equations again, we conclude that
$$
\Phi(t,x;u)\leq M, \quad \forall\ x\in\R, \forall\ t\geq 0.
$$
Therefore, we have that
$$
0\leq \Phi(t,x;u)\leq U^+(x),\quad\forall\ x\in\R,\ 0\leq t\leq T,\ \ u\in\mathcal{E}^T.
$$

Following the arguments of the proof of \cite[Theorem 3.1 ]{SaSh2}, it can be shown that the function  $\Phi \ :\mathcal{E}^T\ni u \mapsto \Phi(\cdot,\cdot,u)\in \mathcal{E}^T $ is continuous and compact in the open compact topology. Hence by Schauder's fixed point theorem there is $u^*\in \mathcal{E}^T$, such that $\Phi(u^*)=u^*$. Note that $(u^*(t,x- c_{\kappa}t),v^*(t,x- c_{\kappa}t))$ is also a solution of \eqref{Keller-Segel-eq}, with $u^*(0,x)=u_0(x)$ for every $x\in\R$. Hence, by uniqueness of the solution to \eqref{Keller-Segel-eq}, we conclude that
$$
u^*(t,x-c_\kappa t)=u(t,x;u_0),\quad \forall\ x\in\R, \ 0\leq t\leq T.
$$
Hence $u(t,x;u_0)\in \mathcal{E}^T$. Since $T$ was arbitrary chosen, we obtain that
$$
u(t,x;u_0)\leq U^{+}(x-c_{\kappa}t)\leq M e^{-\kappa(x-c_{\kappa}t)}, \quad\forall\ x\in\R,\ t\geq 0.
$$

Similar arguments as in the above yield that
$$
u(t,-x;u_0)\leq U^{+}(x-c_{\kappa}t)\leq M e^{-\kappa(x-c_{\kappa}t)}, \quad\forall\ x\in\R,\ t\geq 0.
$$
Thus
$$
u(t,x;u_0)\leq M e^{-\kappa(|x|-c_{\kappa}t)}, \quad\forall\ x\in\R,\ t\geq 0.
$$
The lemma is proved.
\end{proof}

For every $0<\kappa<\tilde{\kappa}<\min\{\sqrt{a},\sqrt{\lambda}\}$ with $\tilde{\kappa}<2\kappa$ and $D\ge 1$, consider the functions $\varphi_{\kappa}(x)$, $\overline{U}_{\kappa,D}(x)$, and $\underline{U}_{\kappa,D} (x)$  given by  $$
\varphi_{\kappa}(x)=e^{-\kappa x},
$$
\begin{equation}\label{l-003}
\overline{U}_{\kappa,D}(x):=\min\{D, \varphi_{\kappa}(x)+D\varphi_{\tilde{\kappa}}(x)\},
\end{equation}
and
\begin{equation}\label{l-004}
\underline{U}_{\kappa,D}(x)=\begin{cases}
\varphi_{\kappa}(x)-D\varphi_{\tilde{\kappa}}(x), \quad x\geq x_{\kappa,D}\cr
\varphi_{\kappa}(x_{\kappa,D})-D\varphi_{\tilde{\kappa}}(x_{\kappa,D}), \quad x\leq x_{\kappa,D},
\end{cases}
\end{equation}
where $x_{\kappa,D}$ satisfies
\begin{equation}\label{l-005}
\max\{\varphi_{\kappa}(x)-D\varphi_{\tilde{\kappa}}(x)\, |\, x\in\R\}=\varphi_{\kappa}(x_{\kappa,D})-D\varphi_{\tilde{\kappa}}(x_{\kappa,D}).
\end{equation}

\begin{lem}
For every $0\leq u(x)\leq \overline{U}_{\kappa,D}(x)$, it holds that
\begin{equation}\label{l-001}
\Psi(x;u)\leq \frac{\mu}{\lambda-\kappa^2}\varphi_{\kappa}(x)+\frac{D\mu}{\lambda-\tilde{\kappa}^2}\varphi_{\tilde{\kappa}}(x)
\end{equation}
and
\begin{equation}\label{l-002}
|\Psi_{x}(x;u)|\leq \mu\left(\frac{1}{\sqrt{\lambda-\kappa^2}}+\frac{\kappa}{\lambda-\kappa^2}\right)\varphi_{\kappa}(x)+D\mu\left(\frac{1}
{\sqrt{\lambda-\tilde{\kappa}^2}}+\frac{\tilde{\kappa}}{\lambda-\tilde{\kappa}^2}\right)\varphi_{\tilde{\kappa}}(x).
\end{equation}
\end{lem}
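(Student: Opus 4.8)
The plan is to exploit the fact that, by the integral representations \eqref{psi-definition-eq2} and \eqref{space-derivative-of-psi-1}, the map $u\mapsto\Psi(\cdot;u)$ is both linear and order preserving, since its kernel $e^{-\sqrt{\lambda}|x-y|}$ is nonnegative. By \eqref{l-003} we have $0\le u(x)\le\overline{U}_{\kappa,D}(x)\le\varphi_{\kappa}(x)+D\varphi_{\tilde{\kappa}}(x)$ for all $x\in\R$, so monotonicity followed by linearity will reduce both inequalities to the single explicit evaluation of $\Psi(\cdot;\varphi_{\kappa})$ and $\Psi(\cdot;\varphi_{\tilde{\kappa}})$, together with the corresponding bounds on their derivatives. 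Throughout I use that $\kappa<\tilde{\kappa}<\sqrt{\lambda}$, so that $\lambda-\kappa^2>0$ and $\lambda-\tilde{\kappa}^2>0$ and all the integrals below converge.

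For \eqref{l-001} I first observe that order preservation gives $\Psi(x;u)\le\Psi(x;\varphi_{\kappa}+D\varphi_{\tilde{\kappa}})=\Psi(x;\varphi_{\kappa})+D\Psi(x;\varphi_{\tilde{\kappa}})$. It then remains to evaluate $\Psi(x;\varphi_{\kappa})$. Substituting $u(y)=e^{-\kappa y}$ into \eqref{psi-definition-eq2} and splitting the integral at $y=x$ yields
\begin{equation*}
\Psi(x;\varphi_{\kappa})=\frac{\mu}{2\sqrt{\lambda}}e^{-\kappa x}\left(\frac{1}{\sqrt{\lambda}-\kappa}+\frac{1}{\sqrt{\lambda}+\kappa}\right)=\frac{\mu}{\lambda-\kappa^2}\varphi_{\kappa}(x),
\end{equation*}
and likewise $\Psi(x;\varphi_{\tilde{\kappa}})=\frac{\mu}{\lambda-\tilde{\kappa}^2}\varphi_{\tilde{\kappa}}(x)$. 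Adding these with the factor $D$ gives exactly \eqref{l-001}.

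The bound \eqref{l-002} is the more delicate one, and I expect it to be the main obstacle, precisely because differentiation does not preserve order, so one cannot simply majorize $\Psi_x(x;u)$ by $\Psi_x(x;\overline{U}_{\kappa,D})$. To get around this I write \eqref{space-derivative-of-psi-1} as $\Psi_x(x;u)=-A(x;u)+B(x;u)$ with the two nonnegative quantities
\begin{equation*}
A(x;u)=\frac{\mu}{2}e^{-\sqrt{\lambda} x}\int_{-\infty}^x e^{\sqrt{\lambda} y}u(y)\,dy,\qquad B(x;u)=\frac{\mu}{2}e^{\sqrt{\lambda} x}\int_x^{\infty} e^{-\sqrt{\lambda} y}u(y)\,dy.
\end{equation*}
Since $u\ge 0$ forces $A,B\ge 0$, we have $|\Psi_x(x;u)|\le A(x;u)+B(x;u)$, and now both $A$ and $B$ are individually linear and order preserving in $u$. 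Hence I may again replace $u$ by $\varphi_{\kappa}+D\varphi_{\tilde{\kappa}}$ and compute term by term: a direct integration gives $A(x;\varphi_{\kappa})+B(x;\varphi_{\kappa})=\frac{\mu\sqrt{\lambda}}{\lambda-\kappa^2}\varphi_{\kappa}(x)$, and similarly with $\kappa$ replaced by $\tilde{\kappa}$. The only remaining point is to control $\frac{\sqrt{\lambda}}{\lambda-\kappa^2}$ by the constant appearing in \eqref{l-002}, which follows from the elementary inequality $\sqrt{\lambda}\le\sqrt{\lambda-\kappa^2}+\kappa$ (square both sides), giving $\frac{\mu\sqrt{\lambda}}{\lambda-\kappa^2}\le\mu\big(\frac{1}{\sqrt{\lambda-\kappa^2}}+\frac{\kappa}{\lambda-\kappa^2}\big)$ and the analogous bound for $\tilde{\kappa}$. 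Combining the two contributions with the factor $D$ produces exactly \eqref{l-002}. The genuine difficulty lies entirely in recognizing the $A+B$ splitting as the right device for restoring monotonicity in the gradient estimate; once that is in place, the remainder is routine integration and the single algebraic inequality.
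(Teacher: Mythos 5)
Your proof is correct, and since the paper's own proof of this lemma is nothing more than a pointer to a ``proper modification'' of \cite[Lemma 2.2]{SaSh2}, your argument serves as a genuinely self-contained replacement. Both of your computations check out: splitting the kernel integral at $y=x$ indeed gives $\Psi(x;\varphi_{\kappa})=\frac{\mu}{2\sqrt{\lambda}}e^{-\kappa x}\big(\frac{1}{\sqrt{\lambda}-\kappa}+\frac{1}{\sqrt{\lambda}+\kappa}\big)=\frac{\mu}{\lambda-\kappa^{2}}\varphi_{\kappa}(x)$, with all integrals convergent because $\kappa<\tilde{\kappa}<\sqrt{\lambda}$, and the inequality $\sqrt{\lambda}\le\kappa+\sqrt{\lambda-\kappa^{2}}$ holds precisely because $\kappa\le\sqrt{\lambda}$, so \eqref{l-002} follows --- in fact you prove it with the slightly sharper constant $\mu\sqrt{\lambda}/(\lambda-\kappa^{2})$ before relaxing it to the stated form. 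One remark that would shorten your write-up considerably: the device you single out as the genuine difficulty, namely the decomposition $\Psi_x=-A+B$ with $A,B\ge 0$, is already available in the paper. Summing your two terms gives $A(x;u)+B(x;u)=\frac{\mu}{2}\int_{\R}e^{-\sqrt{\lambda}|x-y|}u(y)\,dy=\sqrt{\lambda}\,\Psi(x;u)$, so your bound $|\Psi_x(x;u)|\le A(x;u)+B(x;u)$ is exactly inequality \eqref{estimate-on-space-derivative-1} of Lemma \ref{lem-001}, which was proved there by the very same triangle-inequality splitting. Hence \eqref{l-002} follows in one line from Lemma \ref{lem-001}, your bound \eqref{l-001}, and the elementary inequality $\sqrt{\lambda}\le\kappa+\sqrt{\lambda-\kappa^{2}}$ (applied also with $\tilde{\kappa}$); no new monotonicity device is needed for the gradient estimate.
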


\begin{proof}It follows from proper modification of the proof of \cite[Lemma 2.2]{SaSh2}.
\end{proof}

%\textbf{(W.S. Is it true that Lemma \ref{lem-001-2} is needed in the proof of the above lemma? I do not see Lemma \ref{lem-001-2} is  used somewhere else. If it is not used in the proof of the above lemma, we may delete Lemma \ref{lem-001-2})}

\begin{lem}
\label{new-lm2}
Assume $b>\chi\mu$.
For every $R\gg 1$, there are $C_R\gg 1$ and $\varepsilon_R>0$ such that for any $u_0\in C_{\rm unif}^b(\R)$ with $u_0\ge 0$, any
$x_0\in \R$, and any $t\ge 0$,  we have
 \begin{equation}\label{nnew-eq3}
|\chi v_x(t,\cdot;u_0)|_{L^{\infty}(B_{\frac{R}{2}}(x_0))} + |\chi\lambda v(t,\cdot;u_0)|_{L^{\infty}(B_{\frac{R}{2}}(x_0))}\leq C_{R}\|u(t,\cdot;u_0)\|_{L^{\infty}(B_{R}(x_0))}+\varepsilon_R M
\end{equation}
with $\lim_{R\to\infty}\varepsilon_R=0$, where $M:=\max\{\|u_0\|_{\infty},\frac{a}{b-\chi\mu}\}$.
\end{lem}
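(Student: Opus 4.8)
The plan is to exploit the explicit convolution representations of $v$ and $v_x$ furnished by Lemma \ref{new-lm1}, combined with a global $L^\infty$ bound on $u$, and to split each integral into a \emph{near-field} piece (over $B_R(x_0)$) and a \emph{far-field} piece (over its complement). The near-field piece will be controlled by $\|u(t,\cdot;u_0)\|_{L^\infty(B_R(x_0))}$, while the far-field piece will be small because the kernel $e^{-\sqrt\lambda|x-y|}$ decays and $u$ is uniformly bounded by $M$.

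First I would record the a priori bound $0\le u(t,x;u_0)\le M$ for all $x\in\R$, $t\ge 0$, where $M=\max\{\|u_0\|_\infty,\frac{a}{b-\chi\mu}\}$. This follows by a comparison argument: rewriting the first equation of \eqref{Keller-Segel-eq} as $u_t=u_{xx}-\chi v_x u_x+\big[a-\chi\lambda v-(b-\chi\mu)u\big]u$ (using $v_{xx}=\lambda v-\mu u$) and noting that, since $v\ge 0$, the constant $M$ satisfies $\big[a-\chi\lambda v-(b-\chi\mu)M\big]M\le\big[a-(b-\chi\mu)M\big]M\le 0$ whenever $M\ge\frac{a}{b-\chi\mu}$; thus $M$ is a supersolution dominating $u_0$, and the hypothesis $b>\chi\mu$ is exactly what makes $\frac{a}{b-\chi\mu}$ finite and positive. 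This uniform bound is in fact already available from the global well-posedness theory of \cite{SaSh1}.

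Next, by Lemma \ref{new-lm1} we have $v(t,x;u_0)=\Psi(x;u(t,\cdot;u_0))=\frac{\mu}{2\sqrt\lambda}\int_\R e^{-\sqrt\lambda|x-y|}u(t,y;u_0)\,dy$, and the signed representation \eqref{space-derivative-of-psi-1} gives $|v_x(t,x;u_0)|\le\frac{\mu}{2}\int_\R e^{-\sqrt\lambda|x-y|}u(t,y;u_0)\,dy$. For fixed $x_0$ and any $x\in B_{R/2}(x_0)$ I would split each integral over $B_R(x_0)$ and its complement. On $B_R(x_0)$ I bound $u$ by $\|u(t,\cdot;u_0)\|_{L^\infty(B_R(x_0))}$ and the kernel by $\int_\R e^{-\sqrt\lambda|x-y|}dy=\frac{2}{\sqrt\lambda}$; on the complement I use $u\le M$ together with the fact that $|x-y|\ge R/2$ there, so that $\int_{|x-y|\ge R/2}e^{-\sqrt\lambda|x-y|}dy=\frac{2}{\sqrt\lambda}e^{-\sqrt\lambda R/2}$. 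Taking the supremum over $x\in B_{R/2}(x_0)$ this yields
\[
|\chi v_x(t,\cdot;u_0)|_{L^\infty(B_{R/2}(x_0))}+|\chi\lambda v(t,\cdot;u_0)|_{L^\infty(B_{R/2}(x_0))}\le \chi\mu\Big(1+\tfrac{1}{\sqrt\lambda}\Big)\|u(t,\cdot;u_0)\|_{L^\infty(B_R(x_0))}+\chi\mu\Big(1+\tfrac{1}{\sqrt\lambda}\Big)e^{-\sqrt\lambda R/2}M,
\]
so one may take $C_R=\chi\mu\big(1+\frac{1}{\sqrt\lambda}\big)$ (any constant $\gg 1$ above this works) and $\varepsilon_R=\chi\mu\big(1+\frac{1}{\sqrt\lambda}\big)e^{-\sqrt\lambda R/2}$, which tends to $0$ as $R\to\infty$.

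The estimates themselves are elementary once the representation formula is in hand; the only genuine input is the uniform bound $u\le M$, which is what allows the far-field tail to be absorbed into $\varepsilon_R M$. I therefore expect this a priori $L^\infty$ bound, and its justification through the comparison principle for the coupled parabolic-elliptic system, to be the main point requiring care, although it is already supplied by \cite{SaSh1}.
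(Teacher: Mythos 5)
Your proposal is correct and follows essentially the same route as the paper: both rest on the global bound $u\le M$, the convolution representation of Lemma \ref{new-lm1}, the gradient estimate $|v_x|\le\sqrt\lambda\,v$ from Lemma \ref{lem-001}, and a near-field/far-field splitting of the kernel integral in which the near part (contained in $B_R(x_0)$ because $x\in B_{R/2}(x_0)$) is controlled by $\|u(t,\cdot;u_0)\|_{L^\infty(B_R(x_0))}$ and the far part by $M$ times the exponentially small tail $\frac{2}{\sqrt\lambda}e^{-\sqrt\lambda R/2}$. The only cosmetic difference is that the paper splits according to $|z|=|x-y|\lessgtr R/2$ while you split according to $y\in B_R(x_0)$ or not, which yields the same constants up to trivial adjustments.
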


\begin{proof} We first note that
$$
\|u(t,\cdot;u_0)\|_{\infty}\leq \max\{\|u_0\|_{\infty},\frac{a}{b-\chi\mu}\}=M,\quad \forall\ t\geq 0.
$$
Observe from  Lemma \ref{new-lm1} that
$$
v(t,x;u_0)=\frac{\mu}{2\sqrt{\lambda}}\int_{\R}e^{-\sqrt{\lambda}|z|}u(t,x-z;u_0)dz.
$$
Hence
$$
|v(t,x;u_0)|\le \frac{\mu}{2\sqrt{\lambda}}\int_{\mathbb{B}_{\frac{R}{2}}}e^{-\sqrt{\lambda}|z|}u(t,x-z;u_0)dz + \frac{\mu}{2\sqrt{\lambda}}\Big[\int_{\R\setminus\mathbb{B}_{\frac{R}{2}}}e^{-\sqrt{\lambda}|z|}dz\Big]\|u(t,\cdot;u_0)\|_{\infty}.
$$
Thus, since $x\in{B}_{\frac{R}{2}}(x_0)$ and $z\in\mathbb{B}_{\frac{R}{2}}$ imply that $x-z\in\mathbb{B}_{\R}(x_0)$, we obtain that
$$
\|v(t,\cdot;u_0)\|_{ {L^{\infty}({B}_{\frac{R}{2}}(x_0))}}\le \Big[\frac{\mu}{2\sqrt{\lambda}}\int_{\mathbb{B}_{\frac{R}{2}}}e^{-\sqrt{\lambda}|z|}dz\Big]\|u(t,\cdot;u_0)\|_{ {L^{\infty}({B}_{R}(x_0)) }}+ \frac{\mu}{2\sqrt{\lambda}}\Big[\int_{\R\setminus\mathbb{B}_{\frac{R}{2}}}e^{-\sqrt{\lambda}|z|}dz\Big]M.
$$
This combined with \eqref{estimate-on-space-derivative-1} yields \eqref{nnew-eq3}.
\end{proof}

\begin{lem}
\label{new-lm3}
{ Assume $b>\chi\mu$.} For every $p>1$, $t_0>0$, $s_0\geq 0$, $R>0$, and  $u_0\in C_{\rm unif}^b(\R)$, taking $M:=\max\{\|u_0\|_{\infty},\frac{a}{b-\chi\mu}\}$, there is $C_{t_0,s_0,R,M,p}$ such that
\begin{equation}\label{nnew-eq4}
u(t,x;u_0)\le C_{t_0,s_0,R,M,p}[u(t+s,y;u_0)]^{\frac{1}{p}}(M+1),\quad \forall\ s\in[0,s_0], \ t\geq t_0,  \ |x-y|\leq R.
\end{equation}
 \end{lem}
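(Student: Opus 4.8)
The plan is to reduce the assertion to a heat-kernel (Gaussian) comparison for a scalar linear parabolic equation, and then to extract the exponent $1/p$ and the factor $M+1$ from a single application of H\"older's inequality. The delicate point, which forces the exponent to be $1/p<1$ rather than $1$, is the equal-time case $s=0$.

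First I would write the equation for $u$ as a genuine scalar linear parabolic equation. Substituting $v_{xx}=\lambda v-\mu u$ into the first equation of \eqref{Keller-Segel-eq}, the function $u=u(t,x;u_0)$ solves
\[
u_t=u_{xx}-\chi v_x\,u_x+\big(a-\chi\lambda v-(b-\chi\mu)u\big)u .
\]
As noted in the proof of Lemma \ref{new-lm3}, $\|u(t,\cdot;u_0)\|_\infty\le M$ for all $t\ge 0$; by Lemma \ref{new-lm1} one has $\|v(t,\cdot)\|_\infty\le\frac{\mu}{\lambda}M$, and by Lemma \ref{lem-001}, $\|v_x(t,\cdot)\|_\infty\le\sqrt\lambda\|v\|_\infty\le\frac{\mu}{\sqrt\lambda}M$. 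Thus $u\ge 0$ is a bounded classical solution of a uniformly parabolic equation $u_t=u_{xx}+b_1u_x+c_1u$ whose drift $b_1=-\chi v_x$ and potential $c_1=a-\chi\lambda v-(b-\chi\mu)u$ are smooth and bounded by constants depending only on $M$ and on $a,b,\chi,\lambda,\mu$.

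Next I would freeze a base time $t-\delta$ with $\delta\in(0,t_0]$ (admissible since $t\ge t_0$), and use the representation $u(\tau,z)=\int_{\R}\Gamma(\tau,z;t-\delta,\xi)f(\xi)\,d\xi$, where $f:=u(t-\delta,\cdot)$ satisfies $0\le f\le M$ and $\Gamma$ is the fundamental solution of $\partial_t-\partial_{xx}-b_1\partial_x-c_1$. Because the coefficients are smooth and bounded, $\Gamma$ admits two-sided Gaussian bounds $\frac{A_\ell}{\sqrt{t'-t''}}e^{-\beta|z-\xi|^2/(t'-t'')}\le\Gamma(t',z;t'',\xi)\le\frac{A_u}{\sqrt{t'-t''}}e^{-\alpha|z-\xi|^2/(t'-t'')}$ with $0<\alpha\le\beta$ and $A_u,A_\ell>0$ depending only on $M$ and the parameters. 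Evaluating the upper bound at $(t,x)$ and the lower bound at $(t+s,y)$, and using $|x-y|\le R$ together with Young's inequality to recentre the lower Gaussian at $x$, gives, with $r=|x-\xi|$,
\[
u(t,x)\le\frac{A_u}{\sqrt\delta}\int_{\R}e^{-\alpha r^2/\delta}f\,d\xi,\qquad u(t+s,y)\ge \frac{A_\ell c_{R,\delta}}{\sqrt{s+\delta}}\int_{\R}e^{-\beta' r^2/\delta}f\,d\xi ,
\]
where $\beta'=\beta(1+\epsilon)\delta/(s+\delta)\le\beta(1+\epsilon)$ is largest at $s=0$ (so $s>0$ only helps) and all prefactors are bounded uniformly for $s\in[0,s_0]$. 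The H\"older step is the heart: writing $e^{-\alpha r^2/\delta}f=\big(e^{-\beta'r^2/\delta}f\big)^{1/p}\,e^{-(\alpha-\beta'/p)r^2/\delta}f^{1/p'}$ with $p'=p/(p-1)$ and using $f\le M$,
\[
\int_{\R}e^{-\alpha r^2/\delta}f\,d\xi\le\Big(\int_{\R}e^{-\beta'r^2/\delta}f\,d\xi\Big)^{1/p}M^{1/p'}\Big(\int_{\R}e^{-(\alpha-\beta'/p)p'r^2/\delta}\,d\xi\Big)^{1/p'}.
\]
The last Gaussian integral converges precisely when $\alpha-\beta'/p>0$; combining the three displays and $M^{1/p'}\le M+1$ yields exactly $u(t,x)\le C[u(t+s,y)]^{1/p}(M+1)$ with $C=C_{t_0,s_0,R,M,p}$.

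The main obstacle is the borderline case $s=0$: for fast-decaying (e.g.\ Gaussian-tailed) solutions the ratio $u(t,x)/u(t,y)$ is genuinely unbounded, so the parabolic Harnack inequality — which requires a strictly positive time lag — cannot give $u(t,x)\le Cu(t,y)$, and the power $1/p<1$ is what compensates the mismatch between the narrow upper Gaussian and the wider lower Gaussian. Quantitatively, the H\"older step needs $p>\beta'/\alpha$, so to reach \emph{every} $p>1$ one must make $\beta/\alpha$ arbitrarily close to $1$. This is the delicate point, and I would settle it by exploiting that $\Gamma$ is a short-time perturbation of the heat kernel: on a short interval the bounded drift and potential affect only the prefactors and a lower-order centre shift. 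Concretely I would construct explicit Gaussian super- and subsolutions with rates $\tfrac{1}{4(1\pm\eta)}$, $\eta=\eta(\delta)\to 0$ as $\delta\to 0$; then, given $p>1$, choosing $\eta$ (hence $\delta\le t_0$) small enough that $\tfrac{1+\eta}{1-\eta}<p$ closes the estimate uniformly in $s\in[0,s_0]$, $|x-y|\le R$ and $t\ge t_0$. (For $p$ large the inequality also follows trivially from any fixed admissible exponent together with $u\le M$, so only $p$ near $1$ is genuinely at stake.)
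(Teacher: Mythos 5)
Your overall architecture is the right one, and it is essentially the argument behind the result the paper itself invokes: the paper gives no self-contained proof of Lemma \ref{new-lm3}, but simply defers to \cite[Lemma 2.2]{FhCh}, whose proof is exactly of the type you describe (view $u$ as a solution of a linear uniformly parabolic equation with bounded coefficients, represent $u(t,x)$ and $u(t+s,y)$ from the common earlier time $t-\delta$, use an upper Gaussian bound at $(t,x)$ and a lower Gaussian bound at $(t+s,y)$, recentre with a $(1+\epsilon)$ loss, and conclude by H\"older). Your treatment of the equal-time case ($s+\delta\ge\delta$, so $s=0$ is the worst case), your H\"older bookkeeping leading to the constraint $p>\beta'/\alpha$, and your remark that only $p$ near $1$ is genuinely at stake are all correct.

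The genuine gap is in the step you yourself flag as delicate: producing the \emph{lower} Gaussian bound with rate arbitrarily close to the heat-kernel rate $1/4$ by ``explicit Gaussian sub- and supersolutions''. Comparison with explicit barriers cannot deliver this. The natural subsolution candidate $\Phi_\eta(\tau,r)=e^{-\Lambda\tau}\tau^{-1/(2(1-\eta))}e^{-r^2/(4(1-\eta)\tau)}$ (the modified power of $\tau$ is forced if you want a global subsolution) is \emph{more} singular on the diagonal than $\Gamma$ as $\tau\to0^+$, since $\tau^{-1/(2(1-\eta))}\gg\tau^{-1/2}$; hence no constant multiple of $\Phi_\eta$ lies below $\Gamma$ on any interval $(0,\tau_1]$, and the comparison cannot be initialized at $\tau=0$. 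If instead you initialize at some $\tau_1>0$, you need $\Gamma(\tau_1,\cdot)\ge A\Phi_\eta(\tau_1,\cdot)$ for \emph{all} $r$, and any a priori crude lower bound of Aronson/chaining type, $\Gamma\ge c\,\tau^{-1/2}e^{-Kr^2/\tau}$ with $K$ large, fails to dominate $\Phi_\eta$ at large $r$ precisely because $K>1/(4(1-\eta))$. So the scheme is circular: barrier comparison can never improve the exponential rate beyond the rate already known at the initialization time (the same circularity afflicts the upper barrier, which is \emph{less} singular than $\Gamma$ on the diagonal). The two-sided bounds with nearly matching rates are true, but proving them requires the parametrix expansion near the diagonal combined with a chaining argument -- a known but genuinely nontrivial piece of machinery, not a two-line comparison.

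For this particular system there is a short way to close the gap, which you may prefer: the drift is an exact spatial gradient, $-\chi v_x=\partial_x(-\chi v)$. Setting $w=e^{-\chi v/2}u$ eliminates the first-order term entirely and yields $w_t=w_{xx}+\tilde c\,w$, where $\tilde c=a-\chi\lambda v-(b-\chi\mu)u-\tfrac{\chi^2v_x^2}{4}+\tfrac{\chi v_{xx}}{2}-\tfrac{\chi v_t}{2}$ is bounded for $t\ge t_0/2$ (boundedness of $v,v_x$ is Lemma \ref{new-lm1} and Lemma \ref{lem-001}, $v_{xx}=\lambda v-\mu u$, and $v_t=\Psi(\cdot;u_t)$ is bounded by interior parabolic estimates). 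Then, with $K=\|\tilde c\|_\infty$ and $G_\tau$ the \emph{exact} heat kernel, the functions $e^{\pm K\tau}\,(G_\tau*w(t-\delta,\cdot))$ are super-/subsolutions with the same (bounded) initial datum $w(t-\delta,\cdot)$, so the comparison principle gives
\begin{equation*}
e^{-K\tau}\,\big(G_\tau*w(t-\delta,\cdot)\big)(z)\ \le\ w(t-\delta+\tau,z)\ \le\ e^{K\tau}\,\big(G_\tau*w(t-\delta,\cdot)\big)(z),
\end{equation*}
i.e.\ two-sided Gaussian control with the \emph{same} rate $1/4$ on both sides and no initialization problem, because one never compares against the singular kernel $\Gamma$ itself. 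Since $e^{\pm\chi v/2}$ is bounded above and below, the same sandwich holds for $u$ up to constants; the only rate mismatch left is the $(1+\epsilon)$ from recentring $y$ to $x$, and your H\"older step then closes for every $p>1$ exactly as you wrote it.
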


\begin{proof}It can be proved by the arguments of \cite[Lemma 2.2]{FhCh}.
\end{proof}

By \eqref{nnew-eq3} and \eqref{nnew-eq4} with $p>1$, $s_0=0$ and $t_0=1$, we have
\begin{align}
\label{E2-1}
|\chi v_x(t,x;u_0)|+\chi \lambda v(t,x;u_0)&\le C_{R,p} \big(u(t,x;u_0)\big)^{\frac{1}{p}}+\varepsilon_R M\nonumber\\
&= C_{R,p} \big(u(t,x;u_0)\big)^{\frac{1}{p}}+\varepsilon_R M \quad \forall\,\, t\ge 1,\,\, x\in\R,
\end{align}
where  $C_{R,p}=C_R\cdot  C_{1,0,R,M,p}\cdot (M+1) (>0)$.

\section{Spreading speeds}

In this section we derive an explicit upper bound on the spreading speeds of solutions of \eqref{Keller-Segel-eq} with  nonempty compactly  supported initial functions or  exponentially decay initial functions, and
 prove Theorems \ref{spreading-speed-upper-bound}  and  \ref{Main-tm-2}.

\begin{proof}[Proof of Theorem \ref{spreading-speed-upper-bound}]
(1) First, note that for any $c>c^*$, there is $0<\kappa<\sqrt a$ such that
$$\frac{(\kappa-\sqrt{\lambda})_{+}}{(\kappa +\sqrt{\lambda})}\leq \frac{2(b-\chi\mu)}{\chi\mu}
$$
and
$$
c>c_\kappa=\frac{\kappa^2 +a}{\kappa}.
$$
Then
by Lemma \ref{lem-001-1},  we have
$$
\lim_{t\to\infty} \sup_{|x|\ge ct}u(t,x;u_0)=0.
$$
This implies that
$$
c_+^*(\chi,a,b,\lambda,\mu)\le c^*.
$$
 Note that {\bf (H)} implies $c^*=2\sqrt a$.   Thus Theorem \ref{spreading-speed-upper-bound} (1) follows.

\smallskip

(2) Let $0<c<2\sqrt{a}$ be given and set $M=1+a+\max\{\|u_0\|_{\infty},\frac{a}{b-\chi\mu}\}$. By \eqref{E2-1}, it follows that, for any  $R\gg 1$, $(u(t,x;u_0),v(t,x;u_0))$ satisfies
\begin{equation}\label{AA-eq1}
u_t\geq u_{xx}-\chi v_xu_x+u(a-\varepsilon_R M- C_{R,p} u^{\frac{1}{p}}-(b-\chi\mu)u), \quad t\geq 1,\ x\in\R.
\end{equation}
Let $p=2$. Choose  $R\gg 1$ and $0<\eta\ll \min\{1,a\}$ such that $
\varepsilon_{R}M<\frac{\eta}{4}$,
\begin{equation}\label{AA-eq2}
(c+\eta)^2<4(a-\varepsilon_R M),
\end{equation}
and
\begin{equation}\label{AA-eq2-1}
|\chi v_x|\leq C_{R}\sqrt {u(t,x;u_0)}+\frac{\eta}{4}, \quad t\geq 1,\ x\in\R.
\end{equation}
Define
$$
A(t,x)=\frac{\chi v_x}{\max\{1, |\chi v_x|\eta^{-1}\}}, \quad t\geq 1,\ x\in\R.
$$
From this point, the remaining part of the proof is completed in four steps.

\smallskip

\noindent {\bf Step 1.} In this step we construct some sub-solution for \eqref{AA-eq1}.

\smallskip

First, chose $0<\eta_1\ll 1$  satisfying
 \begin{equation}\label{AA-eq2-2}
 C_R\sqrt{\eta_1}+\eta_1<\frac{\eta}{4}.
 \end{equation}
 Let  $\kappa=\min\{\sqrt{a},\sqrt{\lambda}\}$ and $c_{\kappa}=\frac{\kappa^2+a}{\kappa}$. By Lemma \ref{lem-001-1}, we have
\begin{equation}\label{AA-eq3}
u(t,x;u_0)\leq Me^{-\kappa(|x|-c_{\kappa}t)}, \quad\forall\ t\ge 1, x\in\R.
\end{equation}
Choose $m_0\gg 1$   such that
\begin{equation}\label{AA-eq3-1}
Me^{-\kappa m_0}<\eta_1.
\end{equation}
For $N>c_{\kappa}+m_0+1$ (fixed), set
$$
m_1:=\frac{1}{3}\min\{u(t,x)\, |\,  1\leq t\leq N+1, |x|\leq  (c_{\kappa}+N)N+m_0+1\}.
 $$

Next,  let $\underline{u}_1\in C^b_{\rm unif}(\R)\setminus\{0\}$ be  such that
$$
0\leq \underline{u}_1(x)\leq m_1\quad \text{and}\quad  \underline{u}_1(x)=0 \ \forall\ |x|\geq2.
$$
Let $\underline{u}(t,x)$ be the solution of
\begin{equation}\label{AA-eq4}
\begin{cases}
\underline{u}_t=\underline{u}_{xx}-A(t,x)\underline{u}_x+\underline{u}(a-\varepsilon_RM -C_{R,P} \underline{u}^{\frac{1}{p}}-\frac{4(M+a)}{\eta_1}\underline{u}), t>1, x\in\R\cr
\underline{u}(1,x)=\frac{\eta_1 }{4M+ 4\eta_1 e^{(a-\varepsilon_RM)N}}\underline{u}_1(x),\quad x\in\R.
\end{cases}
\end{equation}
Clearly, $u(t,x)\equiv \eta_1$ is a super-solution of \eqref{AA-eq4} and $\|\underline{u}(1,\cdot)\|_{\infty}<\eta_1$. Thus, by comparison principle for parabolic equations that
$$
\underline{u}(t,x)<\eta_1, \quad\forall\ t\geq 1, \ x\in\R.
$$
Furthermore, since $\|A(t,\cdot)\|_{\infty}\leq \eta$ for every $t\geq 1$, then by \eqref{AA-eq2} it holds that
$$
2\sqrt{a-\varepsilon_RM}-\sup_{t\ge 1}\|A(t,\cdot)\|_{\infty}\geq 2\sqrt{a-\varepsilon_RM}-\eta>c.
$$
It then follows from \cite[Theorem 1.2]{BHN} that
\begin{equation}\label{AA-eq5}
\liminf_{t\to\infty}\inf_{|x|\le ct}\underline{u}(t,x)>0.
\end{equation}

\noindent {\bf Step 2.} In this step we compare $\underline{u}(t,x)$ and $u(t,x;u_0)$ and show that
\begin{equation}\label{AA-eq6}
\underline{u}(t,x)< u(t,x;u_0), \quad \forall\ |x|\le (c_{\kappa}+N)t +m_0 , \ t\ge 1.
\end{equation}

Suppose, by contradiction that \eqref{AA-eq6} does not hold. Then there is $t_{\rm inf}\in[1,\infty)$ satisfying
$$
t_{\rm inf}:=\inf\{t\in(1,\infty)\, |\,  \exists  x_t\in\R, {\rm satisfying }\quad  \underline{u}(t,x_t)\geq u(t,x_t), |x_t|\leq (c_{\kappa}+N)t+m_0 \}.
$$
Note that
$$
 \|\underline{u}(t,\cdot)\|_{\infty}\le \frac{\eta_1 m_1}{4M+4\eta_1 e^{(a-\varepsilon_RM)N}}e^{(a-\varepsilon_RM)t}< m_1<u(t,x), \quad \forall\ 1\leq t\leq N , |x|\le (c_{\kappa}+N)N+m_0.
$$
Hence
$$t_{\rm inf}\geq N. $$
Moreover, there is $x_{\rm inf}\in\R$ such that $|x_{\rm inf}|\leq (c_{\kappa}+N)t_{\rm inf}+m_0$,
\begin{equation}\label{AA-eq7}
\eta_1>\underline{u}(t_{\rm inf},x_{\rm inf})=u(t_{\rm inf},x_{\rm inf}),
\end{equation}
and
$$
\underline{u}(t,x)<u(t,x),\quad\ |x|\leq (c_{\kappa}+N)t+m_0, \quad 1\le t<t_{\rm inf}.
$$
We have the following two cases.

\smallskip

\noindent {\bf Case 1.} $|x_{\rm inf}|< (c_{\kappa}+N)t_{\rm inf}+m_0$.

In this case, by inequalities \eqref{AA-eq2-1}, \eqref{AA-eq2-2}, and \eqref{AA-eq7}, there is $0<\delta\ll 1$ such that  $[t_{\rm inf}-\delta,t_{\rm inf}]\times [x_{\rm inf}-\delta,x_{\rm inf}+\delta]\subset \{(t,y)\,:\,|y|<(c_{\kappa}+N)t+m_0 \}$ and
$$
A(t,x)=\chi v_x(t,x;u_0)
,\quad \forall\ t_{\rm inf}-\delta\le t\leq t_{\rm inf},\quad x_{\rm inf}-\delta\leq x\leq x_{\rm inf}+\delta.$$
Note that
$$
\underline{u}(t_{\rm inf}-\delta,x)<u(t_{\rm inf}-\delta,x) \quad \forall x\in[x_{\rm inf}-\delta,x_{\rm inf}+\delta]
$$
and
$$
\underline{u}(t,x_{\rm inf}\pm\delta)\leq u(t,x_{\rm inf}\pm\delta) \quad \forall t_{\rm inf}-\delta\leq t\leq t_{\rm inf}.
$$
Thus, by the comparison principle for parabolic equations, we have
$$
\underline{u}(t,x)<u(t,x) \quad\forall t_{\rm inf}-\delta\leq t\leq t_{\rm inf}, \quad x_{\rm inf}-\delta\leq x\leq x_{\rm inf}+\delta.
$$
In particular,
$$
\underline{u}(t_{\rm inf},x_{\rm inf})<u(t_{\rm inf},x_{\rm inf}).
$$
Which contradicts to \eqref{AA-eq7}.

\smallskip

\noindent {\bf Case 2.} $|x_{\rm inf}|=(c_{\kappa}+N)t_{\rm inf}+m_0$.

In this case, without loss of generality, we may suppose that $x_{\rm inf}=(c_{\kappa}+N)t_{\rm inf}+m_0$.  Let $0<\delta <N-1$ be fixed. Observe that for every  $t\in[t_{\rm inf}-\delta,t_{\rm inf})$ and $x\in[x_{\rm inf}-\delta,\infty)$
$$
|x|-c_{\kappa}t\geq x_{\rm inf}-\delta-c_{\kappa}t_{\rm inf}= N t_{\rm inf}-\delta+m_0 >m_0.
$$
Thus, by \eqref{AA-eq3}, \eqref{AA-eq2-2}, and \eqref{AA-eq2-1} we obtain that
\begin{equation*}
A(t,x)=\chi v_x(t,x;u_0)
,\quad \forall\ t_{\rm inf}-\delta\le t\leq t_{\rm inf},\quad x\geq x_{\rm \inf}-\delta.
\end{equation*}
Whence, since
$$
\underline{u}(t,x_{\rm inf}-\delta)\leq u(t,x_{\rm inf}-\delta), \quad \forall\ t_{\rm inf}-\frac{\delta}{c_{\kappa}+N} \leq t\leq t_{\rm inf},
$$
in order to conclude that $ \underline{u}(t_{\rm inf},x_{\rm inf})<u(t_{\rm inf},x_{\rm inf})$ and obtain a contradiction as in the previous case, it is enough to show that
\begin{equation}\label{AA-eq8}
\underline{u}(t_{\rm inf}-\frac{\delta}{c_{\kappa}+N},x)< u(t_{\rm \inf}-\frac{\delta}{c_{\kappa}+N},x),\quad \forall\ x\geq x_{\rm inf}-\delta.
\end{equation}
 So, to complete the proof of this step it remains to prove \eqref{AA-eq8}.  Observe that
$$
A(t,x)=\chi v_x(t,x;u_0), \quad \forall\ x\geq (c_{\kappa}+N)t+m_0, t\geq 1,
$$
$$
\underline{u}(1,x)< u(1,x),\quad \forall\ x\in\R,
$$
and
$$
\underline{u}(t,(c_{\kappa}+N)t+m_0)< u(t,(c_{\kappa}+N)t+m_0), \quad \forall 1\leq t\leq t_{\rm inf}-\frac{\delta}{c_{\kappa}+N}.
$$
Thus by comparison principle for parabolic equations, we conclude that \eqref{AA-eq8} holds.

\smallskip

\noindent {\bf Step 3}. We conclude the proof of \eqref{T1-eq1} here.

\smallskip

By  \eqref{AA-eq5} and \eqref{AA-eq6}, we deduce that
$$
\liminf_{t\to\infty}\inf_{|x|\le ct}u(t,x)>0.
$$
Which completes the proof of \eqref{T1-eq1}.

\smallskip

\noindent {\bf Step 4.} In this step, we prove \eqref{T1-eq2}.

\smallskip

Suppose that $2\chi\mu<b$ and suppose by contradiction that \eqref{T1-eq2} does not hold. Then there exist $0<c<2\sqrt{a}$,  $t_n\to\infty$ and $|x_n|\leq ct_n$ for every $n\geq 1$  such that
 \begin{equation}\label{xx-q}
\inf_{n\geq 1}|u(t_n,x_n)-\frac{a}{b}|>0.
 \end{equation}
 Consider the sequence $(u^n(t,x),v^n(t,x))=(u(t+t_n,x+x_n;u_0),v(t+t_n,x+x_n))$, using estimates for parabolic equations, without loss of generality we may suppose that $(u^n(t,x),v^n(t,x))\to (u^*(t,x),v^*(t,x))$ locally uniformly in $C^{1,2}(\R\times\R)$. Furthermore, $(u^*(t,x),v^*(t,x))$  is an entire solution of \eqref{Keller-Segel-eq}. But, it holds that
 $$
u^*(t,x)\geq \liminf_{ {\tau\to\infty}}{ \inf_{|y|\leq(c+\frac{2\sqrt{a}-c}{2})\tau}}u(\tau,y;u_0)>0.
 $$
 Thus, since $2\chi\mu<b$, by the stability of the positive constant equilibrium $(\frac{a}{b},\frac{a\mu}{b\lambda})$, we must have $u^*(t,x)=\frac{a}{b}$ for every $t,x\in\R$. In particular, $u^*(0,0)=\frac{a}{b}$, which contradicts to \eqref{xx-q}.
\end{proof}

\begin{proof} [Proof of Theorem \ref{spreading-speed-upper-bound-of-front-like-initials}]
It can be proved by the similar arguments as those in Theorem \ref{spreading-speed-upper-bound}.
\end{proof}

 To prove  Theorem \ref{Main-tm-2} we first recall the following result established in \cite{SaSh6_1}.

\begin{tm}\cite[Theorem 1.2 (i)]{SaSh6_1}
 \label{persitence-tm}
 Assume $\chi\mu<b$. For every  $ \delta >0$ and $M>0$  there is $0<\underline{m}( \delta,\chi,\mu,a,b,\lambda,M)<\overline{m}( \delta,\chi,\mu,a,b,\lambda,M)<\infty$ such that for every $u_0\in C^{b}_{\rm unif}(\R)$ satisfying
$$
\delta\leq u_0(x)\leq M \quad  \forall x\in\R,
$$
then
$$
\underline{m}(\delta,\chi,\mu,a,b,\lambda,M)\leq u(t,x;u_0)\leq \overline{m}(\delta,\chi,\mu,a,b,\lambda,M),\quad \forall\ x\in\R, t\geq 0.
$$
\end{tm}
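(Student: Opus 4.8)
The plan is to treat the two bounds separately: the upper one is routine, while the lower one carries all the difficulty. For the upper bound I would first rewrite the $u$-equation in non-divergence form. Using $(uv_x)_x=u_xv_x+uv_{xx}$ together with the elliptic relation $v_{xx}=\lambda v-\mu u$, equation \eqref{Keller-Segel-eq} becomes
\[
u_t=u_{xx}-\chi v_x u_x+u\big(a-\chi\lambda v-(b-\chi\mu)u\big).
\]
Since $v\ge 0$ and $b>\chi\mu$, the zeroth-order factor is at most $a-(b-\chi\mu)u$, so the spatially constant function $\bar u(t)$ solving the logistic ODE $\bar u'=\bar u(a-(b-\chi\mu)\bar u)$ with $\bar u(0)=M$ is a supersolution (the drift drops for $x$-independent functions and the comparison principle absorbs it). As $\bar u(t)\le\max\{M,a/(b-\chi\mu)\}$ for all $t$, this yields the upper bound with $\overline m:=\max\{M,a/(b-\chi\mu)\}$, which depends only on $M$ and the parameters. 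By Lemma \ref{new-lm1} one also gets $0\le v\le\tfrac{\mu}{\lambda}\overline m$, and Lemma \ref{lem-001} gives $|\chi v_x|\le\chi\sqrt\lambda\,v\le\tfrac{\chi\mu}{\sqrt\lambda}\overline m$.

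A consequence is that, along the solution, the drift $-\chi v_x$ and the zeroth-order coefficient $c(t,x):=a-\chi\lambda v-(b-\chi\mu)u$ are bounded uniformly with bounds depending only on $\overline m$ and the parameters; in particular $c$ is bounded below by $a-\chi\mu\overline m-(b-\chi\mu)\overline m$. Viewing $u$ as a nonnegative solution of a linear parabolic equation with such coefficients, I would invoke the interior parabolic Harnack inequality on unit space-time cylinders to obtain a constant $C_H$ (depending only on the parameters and $\overline m$) with $\inf_{|x-x_0|\le1}u(t+1,\cdot)\ge C_H^{-1}\sup_{|x-x_0|\le1}u(t,\cdot)$ for every $x_0$ and $t\ge1$. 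For the initial window, comparison with $\delta e^{-Kt}$, where $K$ bounds the negative part of $c$, together with $u_0\ge\delta$ gives $u(t,x)\ge\delta e^{-2K}>0$ on $[0,2]$.

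The heart of the theorem is a positive lower bound for $u$ uniform in $t$ and $x$. By the Harnack inequality it suffices to produce $\gamma>0$, depending only on the parameters and $\overline m$, with $\sup_{|x-x_0|\le1}u(t,\cdot)\ge\gamma$ for all $x_0$ and $t\ge1$; then $\inf_x u(s,\cdot)\ge\gamma/C_H=:\underline m$ for $s\ge2$, and the window $[0,2]$ is covered by the finite-time bound above. I would establish the sup lower bound by contradiction and compactness, in the spirit of Step 4 of the proof of Theorem \ref{spreading-speed-upper-bound}: if $\sup_{|x-x_n|\le1}u(t_n,\cdot)\to0$ along some $t_n\to\infty$ and $x_n$, then the translates $(u(t+t_n,x+x_n),v(t+t_n,x+x_n))$ converge in $C^{1,2}_{\rm loc}$ to a nonnegative bounded entire solution $(u^*,v^*)$ that vanishes on a whole unit ball, hence, being a nonnegative supersolution of a linear parabolic equation, vanishes identically by the strong maximum principle. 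To contradict this I would rule out such total local depletion: wherever $u$, and hence $v$, is small across a sufficiently long interval, the equation reduces to $u_t\ge u_{xx}-\chi v_xu_x+u(a-o(1)-(b-\chi\mu)u)$, a Fisher--KPP type inequality, and a compactly supported sub-solution invading this region — constructed as in Step 1 of the proof of Theorem \ref{spreading-speed-upper-bound} and propagated via \cite[Theorem 1.2]{BHN} — forces $u$ to grow and spread, contradicting the assumed smallness.

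The main obstacle is precisely the chemotactic depletion term $-\chi\lambda uv$, which blocks the homogeneous comparison that settled the upper bound: at a spatial minimum of $u$ the value $v$ can still be as large as $\tfrac{\mu}{\lambda}\overline m$, driven by concentrations of $u$ far away, so the scalar bound $c\ge a-\chi\mu\overline m$ may be negative and would only yield decay. Overcoming this requires genuinely exploiting that $v=\Psi(\cdot;u)$ is a spatial average of $u$ (Lemma \ref{new-lm1}), so that wherever $u$ is small over a large enough scale $v$ is correspondingly small and the logistic growth $+au$ regains control. Making this quantitative and \emph{uniform} in $x$ and $t$ — through the Harnack inequality and the compactness/sub-solution argument above — is the delicate core of the proof, and is exactly the point where only $\chi\mu<b$ (rather than the stronger $b>2\chi\mu$ used for convergence to $a/b$) is needed.
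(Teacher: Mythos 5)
First, a structural remark: this paper does not prove Theorem \ref{persitence-tm} at all --- it is quoted from \cite[Theorem 1.2 (i)]{SaSh6_1} --- so your attempt is a reproof of an external result, not of an argument appearing here. Your upper bound (comparison with the logistic ODE after rewriting the equation in non-divergence form), the crude bound $u\ge \delta e^{-Kt}$ on finite time windows, and the Harnack reduction from an infimum bound to a sup-over-unit-balls bound are all correct. The genuine gap is in the contradiction mechanism for the lower bound, which as written is circular. After extracting $t_n\to\infty$, $x_n$ with $\sup_{|x-x_n|\le 1}u(t_n,\cdot)\to 0$, the limit entire solution $(u^*,v^*)\equiv(0,0)$ is itself a perfectly admissible entire solution of \eqref{Keller-Segel-eq}, so no contradiction is available ``in the limit''; it must be quantitative along the sequence. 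Your quantitative mechanism --- seed a compactly supported subsolution in the depleted region and let it spread via \cite[Theorem 1.2]{BHN}, as in Step 1 of the proof of Theorem \ref{spreading-speed-upper-bound} --- fails on two counts. (i) Seeding: the subsolution must lie below $u(t_n-T,\cdot)$, and in the depleted region $u$ is only known to be positive, not bounded below uniformly in $n$; a uniform positive seed at times $t_n-T\to\infty$ is precisely the statement being proved. (In Theorem \ref{spreading-speed-upper-bound} this is harmless because the seed sits at the fixed time $t=1$ and only $t\to\infty$ asymptotics are needed.) (ii) Domain of comparison: the KPP-type inequality holds only inside the depletion cylinder, whose spatial radius $R$ is fixed (compactness yields smallness on fixed-size windows only), while a spreading subsolution exits $B_R(x_n)$ after time $O(R)$, where the comparison breaks down; moreover \cite[Theorem 1.2]{BHN} is an asymptotic statement for fixed data and gives no growth estimate uniform over the shifted equations and shrinking seeds.

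What closes the argument is a first-exit-time bookkeeping that your proposal lacks. Let $s_n$ be the first time, going backwards from $t_n$, at which $u$ exceeds a fixed threshold $\varepsilon_0$ somewhere in $B_R(x_n)$. Either $s_n$ stays bounded, in which case $u(s_n,\cdot)\ge \delta e^{-Ks_n}$ supplies a seed uniform in $n$; or $s_n\to\infty$, in which case the Harnack inequality transfers the value $\varepsilon_0$ attained at the exit point to a uniform seed of size $C^{-1}\varepsilon_0$ at $(s_n+O(1),x_n)$. Then, instead of a spreading subsolution, one uses a \emph{confined} growing mode: on the depletion cylinder the zeroth-order coefficient is $\ge a/2$ and the drift is small, so $\eta\, e^{(a/4)(t-s_n)}\phi(x-x_n)$, with $\phi$ a principal Dirichlet eigenfunction on $B_{R/2}(x_n)$ and $R$ fixed so large that the eigenvalue plus drift correction is below $a/4$, is a subsolution that never leaves the cylinder. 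Run from the uniform seed over the whole depletion window $[s_n,t_n]$, it forces $u(t_n,x_n)\ge c\,\varepsilon_0$ (or even $u(t_n,x_n)\to\infty$ when $t_n-s_n\to\infty$), contradicting $u(t_n,x_n)\to 0$. Two smaller corrections: since $\underline m,\overline m$ must be uniform over all admissible $u_0$, the contradiction sequence should carry initial data $u_{0,n}$ as well as $(t_n,x_n)$; and your assertion that $\gamma$ depends ``only on the parameters and $\overline m$'' with threshold $t\ge 1$ cannot be right --- for $u_0\equiv\delta$ with $\delta$ tiny one has $u(1,\cdot)\approx \delta e^{a}$ --- so $\gamma$ (or the time threshold) must also depend on $\delta$, which the statement of the theorem indeed permits.
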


\medskip

Next, we present the proof of Theorem \ref{Main-tm-2}.

\medskip

\begin{proof}[Proof of Theorem \ref{Main-tm-2}(1)]

 Let $u_0\in C^b_{\rm unif}(\R)$ satisfy \eqref{u-0-condition-1}. Then there is $M\gg 1$ such that  $u_0(x)\leq\min\{M, M e^{-\kappa x}\}$ for every $x\in\R$. Therefore, \eqref{Spreadind-speed-eq1} follows from \eqref{new-eq1-2}. So, it remains to prove that \eqref{Spreadind-speed-eq1-1} holds.

Let $m=\frac{1}{2}\inf_{x\leq 0}u_0(x)$ and $M\gg 1$ be chosen as above. Let $0<\underline{m}(m,\chi,\mu,a,b,\lambda,M)<\overline{m}(m,\chi,\mu,a,b,M)$ be given by Theorem \ref{persitence-tm}. Then for there every $T>0$ it holds that
\begin{equation}\label{Z-eq0}
\liminf_{x\to-\infty}u(T,x;u_0)\geq \underline{m}(m,\chi,\mu,a,b,\lambda,M).
\end{equation}

By Lemmas \ref{new-lm2}  and \ref{new-lm3}, it follows that $(u(t,x;u_0),v(t,x;u_0))$ satisfies
\begin{equation}
u_t\geq u_{xx}-\chi v_xu_x+u(a-\varepsilon_R M - C_{R,P} u^{\frac{1}{p}}-(b-\chi\mu)u),\quad t\ge 1, \ x\in\R.
\end{equation}
Let $0<\varepsilon\ll 1$ be fixed.  Choose $R\gg 1$ such that $0<\varepsilon_RM<\frac{\varepsilon}{4}$ and   $\kappa^2<a-\varepsilon_R M$.  For every $0<\delta\ll a-\varepsilon_R M$ satisfying $\kappa^2<a-\varepsilon_R M-\delta$ consider
\begin{equation}\label{Z-eq1}
\underline{u}_t=\underline{u}_{xx} + \underline{u}(a-\varepsilon_R M-\delta- (b-\chi\mu)\underline{u})
\end{equation}
 and set $c^{\delta}_{\kappa}:=\frac{a-\varepsilon_R M-\delta +\kappa^2}{\kappa}$. Let $\underline{U}^{\delta}$ denote the monotone decreasing traveling wave solution of \eqref{Z-eq1} connecting $\underline{u}(t,x)\equiv 0$ and $\underline{u}(t,x)\equiv \frac{a-\varepsilon_R M-\delta}{b-\chi\mu}$ satisfying
$$
\lim_{x\to\infty}\frac{\underline{U}^{\delta}(x)}{e^{-\kappa x}}=1.
$$
Let $B_{\varepsilon}(t,x)=\min\{\varepsilon,-\chi v_x(t,x;u_0)\}$ for every $0<\varepsilon\ll 1$ and $\underline{u}^{\delta,\sigma}(t,x)=\sigma \underline{U}^{\delta}(x-(c_{\kappa}^{\delta}-\varepsilon)t)$ for every $0<\sigma\ll1$. Thus $\underline{u}^{\delta}(t,x)$ satisfies
\begin{align*}
\underline{u}^{\delta,\sigma}_t= &\underline{u}^{\delta,\sigma}_{xx}+\varepsilon\underline{u}^{\delta,\sigma}_x + \underline{u}^{\delta,\sigma}(a-\varepsilon_ R M-\delta-\frac{(b-\chi\mu)}{\sigma}\underline{u}^{\delta,\sigma})\cr
\le&\underline{u}^{\delta,\sigma}_{xx}+B_{\varepsilon}(t,x)\underline{u}^{\delta,\sigma}_x + \underline{u}^{\delta,\sigma}(a-\varepsilon_R M-\delta-\frac{(b-\chi\mu)}{\sigma}\underline{u}^{\delta,\sigma})\quad (\text{since } \,\, \underline{u}^{\delta,\sigma}_x\leq 0).
\end{align*}
Hence, since $\sup_{t\geq 0}\|\underline{u}^{\delta,\sigma}(t,\cdot)\|_{\infty}\leq \frac{\sigma(a-\varepsilon_R M-\delta)}{b-\chi\mu}\to0$ as $\sigma\to 0^+$, then there is $0<\sigma_0\ll 1$ such that
\begin{align}\label{Z-eq2}
\underline{u}^{\delta,\sigma}_t
\le & \underline{u}^{\delta,\sigma}_{xx}+B_{\varepsilon}(t,x)\underline{u}^{\delta,\sigma}_x + \underline{u}^{\delta,\sigma}(a-\varepsilon_R M-C_R\sqrt[p]{\underline{u}^{\delta,\sigma}}-(b-\chi\mu)\underline{u}^{\delta,\sigma})
\end{align}
for every $0<\sigma<\sigma_0$.

Choose $\eta_1>0$ satisfying
$$
C_R\sqrt[p]{\eta_1}+\eta_1+\varepsilon_RM<\frac{1}{2}\min\{\varepsilon,\underline{m}(m,\chi,\mu,a,b,\lambda,M)\},
$$
and  choose $m_0\gg 0$ such that
$$
Me^{-\kappa m_0}<\eta_1.
$$
Let $N>c_{\kappa}+1+m_0$ be fixed and set $$
m_1:=\frac{1}{3}\inf\{u(t,x)\, |\, 1\leq t\leq N, x\leq (c_{\kappa}+N)N+m_0+1\}.
$$

{  We claim that $m_1>0$. Indeed, suppose by contradiction that  $m_1=0$, then there exist a sequence $x_n \leq (c_{\kappa}+N)N+m_0+1$ and a sequence $1\leq t_n\leq N$ such that \begin{equation}\label{dd-eq1}
u(t_n,x_n;u_0)\to0 \,\, \text{ as}\,\, n\to\infty.
\end{equation} Since $\{t_n\}_{n\geq 1}$ is bounded, without loss of generality, we may suppose that it converges to some $t^*\in[1,N]$. Note that $x_n\to-\infty$ as $n\to\infty$, otherwise without loss of generality, we may suppose that $(t_n,x_n)\to (t^*,x^*)$.  So, $u(t_n,x_n;u_0)\to u(t^*,x^*;u_0)>0$, contradicting the choice of the sequence $(t_n,x_n)$. Now, set $u_{0n}(x):=u_0(x+x_n)$, and observe this is a sequence of uniformly bounded and equicontinuous functions. So, by Arzela-Ascoli's Theorem, it converges (up to a subsequence) locally uniformly  to some function $\tilde{u}_0\in C^b_{\rm unif}(\R)$. Furthermore, since $x_n\to-\infty$ as $n\to\infty$, it follows that $\inf_{x\in\R}\tilde{u}_0(x)\geq \inf_{x\le 0}u_0(x)>m$. Note also that $\|\tilde{u}_0\|_{\infty}\leq M$. Thus, by Theorem \ref{persitence-tm}, we have that
\begin{equation}\label{dd-eq2}
u(t,x;\tilde{u}_0)\geq \underline{m}(m,\chi,\mu,a,b,\lambda,M) \quad \forall\ x\in\R, t\ge 0.
\end{equation} But by \cite[Lemma 3.2]{SaSh6_1}, we have that $$
(u(t+t_n,x;u_{0n}),v(t+t_n,x;u_{0n}))\to (u(t+t^*,x;\tilde{u}_0),v(t+t^*,x;\tilde{u}_0)) \,\, \text{as}\,\ n\to\infty
$$
locally uniformly. In particular \begin{equation}\label{dd-eq3}
u(t_n,0;u_{0n}) \to u(t^*,0;\tilde{u}_0)  \,\, \text{as}\,\ n\to\infty.
\end{equation}
Noting that
$$
u(t_n,0;u_{0n})=u(t_n,0;u_{0}(\cdot+x_n))=u(t_n,x_n;u_{0})\,\,\forall\ n\ge 1,
$$
it follows from \eqref{dd-eq1}- \eqref{dd-eq2} that $\underline{m}(m,\chi,\mu,a,b,\lambda,M)=0$, which yields a contradiction. Thus $m_1>0$.

}

Chose $0<\sigma_1<\sigma_0$ satisfying
$$\frac{\sigma_1(a-\delta-\varepsilon_R M)}{b-\chi\mu}<\min\{\eta_1  ,m_1e^{-(a-\delta)N}\}.$$
We claim that
\begin{equation}\label{Z-eq4}
\underline{u}^{\delta,\sigma_1}(t,x)< u(t,x;u_0),\quad \forall\ x\leq (c_{k}+N)t +m_0, \  t\ge 1.
\end{equation}

Indeed, observe that $\|\underline{u}^{\delta,\sigma_1}(t,\cdot)\|_{\infty}<\eta_1$ for every $t\geq 1$ and
$$
\|\underline{u}^{\delta,\sigma_1}(t,\cdot)\|_{\infty}\leq \frac{\sigma_1(a-\delta-\varepsilon_R M)}{b-\chi\mu}e^{(a-\delta)t}<m_1\leq u(t,x;u_0),\quad \forall\ x\leq (c_{k}+N) N+m_0, \quad 1\le t\leq N.
$$
Thus, by \eqref{Z-eq0}, using similar arguments as those in Step 2 of the proof of \eqref{T1-eq1}, we conclude that \eqref{Z-eq4} holds.
By \eqref{Z-eq4}, we deduce that
$$
\liminf_{t\to\infty}\inf_{x\leq (c_\kappa -2\varepsilon)t}u(t,x;u_0)\geq \frac{(a-\delta)\sigma_1}{b-\chi\mu}>0.
$$
Whence, \eqref{Spreadind-speed-eq1-1} follows since $\varepsilon$ is arbitrary chosen.
\end{proof}

\medskip

\medskip

\begin{proof} [Proof of Theorem \ref{Main-tm-2}(2)]

Assume $b>2\chi\mu$. Using \eqref{Spreadind-speed-eq1-1}, the proof of \eqref{Spreadind-speed-eq2} follows similar arguments as the proof of \eqref{T1-eq2}.  So, it remains to prove that \eqref{Spreadind-speed-eq3} holds.

To this end, set  $c_{\kappa}=\frac{a+\kappa^2}{\kappa}$  and let $0\ll \varepsilon\ll 1$. Consider the set
\begin{align*}
\mathcal{E}^{T,\varepsilon}_{\kappa,D}:=\{u\in C([0,T]: C^b_{\rm unif}(\R))\, |\, & \ u(0,\cdot)=u_0(\cdot)\ \text{and}\\
 & 0\leq u(t,x)\leq (1+\varepsilon)\overline{U}_{ \kappa,D}(x)\ \forall\ x\in\R, \ 0\leq t\leq T\},
\end{align*}
 where $\overline{U}_{ \kappa,D}$  is defined in \eqref{l-003}.
For every $u\in\mathcal{E}^{T,\varepsilon}_{\kappa,D}$, let $\Phi(t,x;u)$ denote the solution of
\begin{equation*}
\begin{cases}
\Phi_t=\mathcal{A}_{u}(\Phi), \ x\in\R, 0<t\leq T,\cr
\Phi(0,x)=u_0(x),\quad x\in\R.
\end{cases}
\end{equation*}
where
$$
\mathcal{A}_{u}(\Phi):=\Phi_{xx}+(c_\kappa-\chi\Psi_x(\cdot,\cdot;u))\Phi_x+(a-\chi\lambda\Psi(\cdot,\cdot;u)-(b-\chi\mu)\Phi)\Phi.
$$
 Thus for every $u\in\mathcal{E}^{T,\varepsilon}_{\kappa,D}$, we have that
\begin{align*}
\mathcal{A}_{u}((1+\varepsilon)D)=\left(a-\chi\lambda\Psi(t,x;u)-(b-\chi\mu)D(1+\varepsilon) \right)D(1+\varepsilon)\leq 0,
\end{align*}
whenever  $D\geq \frac{a}{(b-\chi\mu) (1+\varepsilon)}$.
Hence, by comparison principle for parabolic equations, we have that
\begin{equation}\label{l-1}
\Phi(t,x;u)\leq  (1+\varepsilon)D, \quad\forall\ u\in\mathcal{E}^{T,\varepsilon}_{\kappa,D},\ x\in\R, t\geq 0.
\end{equation}
On the other hand, for any $\tilde \kappa$ with $0<\kappa<\tilde{\kappa}< \min\{\sqrt{a},\sqrt \lambda\}$,  using  \eqref{estimate-on-space-derivative-1}, we have that
\begin{equation}\label{l-2}
{\kappa}\Psi_{x}(\cdot,\cdot,u)-\lambda\Psi(\cdot,\cdot;u)\leq0 \quad \text{and}\quad \tilde{\kappa}\Psi_{x}(\cdot,\cdot,u)-\lambda\Psi(\cdot,\cdot;u)\le 0
\end{equation}
and
\begin{align*}
&\mathcal{A}_{u}((1+\varepsilon)\left(e^{-\kappa x}+De^{-\tilde{\kappa}x}\right))\cr
=&(1+\varepsilon)D\left(\tilde{\kappa}^2-\tilde{\kappa}c_{\kappa}+a\right)e^{-\tilde{\kappa}x}+\chi(1+\varepsilon)\left(\kappa\Psi_{x}(\cdot,\cdot,u)-\lambda\Psi(\cdot,\cdot;u)\right)e^{-\kappa x} \cr
&+ D\chi(1+\varepsilon)\left(\tilde{\kappa}\Psi_{x}(\cdot,\cdot,u)-\lambda\Psi(\cdot,\cdot;u)\right)e^{-\tilde{\kappa} x}-(b-\chi\mu)(1+\varepsilon)^2\left(e^{-\kappa x}+De^{-\tilde{\kappa} x}\right)^2\cr
\leq&  (1+\varepsilon)D\left(\tilde{\kappa}^2-\tilde{\kappa}c_{\kappa}+a\right)e^{-\tilde{\kappa}x}-(b-\chi\mu)(1+\varepsilon)^2\left(e^{-\kappa x}+De^{-\tilde{\kappa} x}\right)^2\cr
=&D(1+\varepsilon) (\frac{\tilde{\kappa}}{\kappa}-1)(\tilde{\kappa}\kappa-a)e^{-\tilde{\kappa}x }-(b-\chi\mu)(1+\varepsilon)^2\left(e^{-\kappa x}+De^{-\tilde{\kappa} x}\right)^2\cr
\leq& 0.
\end{align*}
 Whence, comparison principle for parabolic equations and \eqref{l-1}  yield  that
$$
\Phi(t,x;u)\leq (1+\varepsilon)(e^{-\kappa x}+De^{-\tilde{\kappa}x}),\quad\forall\ u\in\mathcal{E}^{T,\varepsilon}_{\kappa,D},\ x\in\R, t\geq 0.
$$
Therefore,
$$
\Phi(t,x;u)\leq (1+\varepsilon)\overline{U}_{\kappa,D}(x),\quad\forall\ u\in\mathcal{E}^{T,\varepsilon}_{\kappa,D},\ x\in\R, t\geq 0.
$$
Following the arguments of the proof of \cite[Theorem 3.1]{SaSh2}, it can be shown that the function  $\Phi \ :\mathcal{E}^{T,\varepsilon}_{\kappa,D}\ni u \mapsto \Phi(u)\in \mathcal{E}^{T,\varepsilon}_{\kappa,D} $ is continuous and compact in the open compact topology. Hence by Schauder's fixed point theorem there is $u^*\in \mathcal{E}^{T,\varepsilon}_{\kappa,D}$, such that $\Phi(u^*)=u^*$. Note that $(u^*(t,x- c_{\kappa}t),v^*(t,x- c_{\kappa}t))$ is also a solution of \eqref{Keller-Segel-eq}, with $u^*(0,x)=u_0(x)$ for every $x\in\R$. Hence, by uniqueness of the solution to \eqref{Keller-Segel-eq}, we conclude that
$$
u^*(t,x-c_{\kappa}t)=u(t,x;u_0),\quad \forall\ x\in\R, \ 0\leq t\leq T.
$$
Hence $u(t,x;u_0)\in \mathcal{E}^{T,\varepsilon}_{\kappa,D}$. Since $T$ was arbitrary chosen, it follows that
\begin{equation}\label{l-3}
u(t,x;u_0)\leq (1+\varepsilon)\overline{U}_{\kappa,D}(x-c_{\kappa}t), \forall\ x\in\R, \forall\ t\ge0.
\end{equation}

Next, for $x\in O_{\kappa}:=\{y\, |\, e^{-\kappa y}>De^{-\tilde{\kappa}y}\}$, taking $A_{\kappa}:=(1-\frac{\tilde{\kappa}}{\kappa})(\tilde{\kappa}\kappa-a)$, and using \eqref{l-2}, we have
that
$$
 D\chi(1-\varepsilon)\left(\lambda\Psi(\cdot,\cdot;u)-\tilde{\kappa}\Psi_{x}(\cdot,\cdot,u)\right)e^{-\tilde{\kappa} x}+(b-\chi\mu)(1-\varepsilon)^2D\left[2e^{-\kappa x}-De^{-\tilde{\kappa}x}\right]e^{-\tilde{\kappa} x}\geq 0.
$$
Whence for $x\in O_{\kappa}$, it holds that
\begin{align*}
&\mathcal{A}_{u}((1-\varepsilon)\left(e^{-\kappa x}-De^{-\tilde{\kappa}x}\right))\cr
=&(1-\varepsilon)DA_{\kappa}e^{-\tilde{\kappa}x}-(b-\chi\mu)(1-\varepsilon)^2e^{-2\kappa x}+\chi(1-\varepsilon)\left(\kappa\Psi_{x}(\cdot,\cdot,u)-\lambda\Psi(\cdot,\cdot;u)\right)e^{-\kappa x} \cr
&+ D\chi(1-\varepsilon)\left(\lambda\Psi(\cdot,\cdot;u)-\tilde{\kappa}\Psi_{x}(\cdot,\cdot,u)\right)e^{-\tilde{\kappa} x}+(b-\chi\mu)(1-\varepsilon)^2D\left[2e^{-\kappa x}-De^{-\tilde{\kappa}x}\right]e^{-\tilde{\kappa} x}\cr
\ge&(1-\varepsilon)DA_{\kappa}e^{-\tilde{\kappa}x}-(b-\chi\mu)(1-\varepsilon)^2e^{-2\kappa x}+\chi(1-\varepsilon)\left(\kappa\Psi_{x}(\cdot,\cdot,u)-\lambda\Psi(\cdot,\cdot;u)\right)e^{-\kappa x}\cr
\geq &(1-\varepsilon)DA_{\kappa}e^{-\tilde{\kappa}x}-(b-\chi\mu)(1-\varepsilon)^2e^{-2\kappa x}-\chi(1-\varepsilon)\left(\kappa|\Psi_{x}|(\cdot,\cdot,u)+\lambda\Psi(\cdot,\cdot;u)\right)e^{-\kappa x}.
\end{align*}
Since $ e^{-\kappa x}>De^{-\tilde{\kappa}x}$, it follows from \eqref{l-001} and \eqref{l-002}  that
$$
\kappa|\Psi_{x}|(\cdot,\cdot,u)+\lambda\Psi(\cdot,\cdot;u)\leq B_{\kappa,\tilde{\kappa}}e^{-\kappa x},
$$
where $B_{\kappa,\tilde{\kappa}}:=\mu\left(\frac{\kappa}{\sqrt{\lambda-\kappa^2}}+\frac{\kappa^2+\lambda}{\lambda-\kappa^2}+\frac{ D\kappa}{\sqrt{\lambda-\tilde{\kappa}^2}}+\frac{ D (\lambda+\tilde{\kappa}\kappa)}{\lambda-\tilde{\kappa}^2}\right)$. Hence for every $x\in O_{\kappa}$, we have
\begin{align*}
\frac{\mathcal{A}_{u}((1-\varepsilon)\left(e^{-\kappa x}-De^{-\tilde{\kappa}x}\right))}{(1-\varepsilon)}\geq \left( DA_{\kappa}-\left( (b-\chi\mu)(1-\varepsilon)-\chi B_{\kappa,\tilde{\kappa}}\right)e^{-(2\kappa-\tilde{\kappa}) x}\right)e^{-\tilde{\kappa}x}\geq 0
\end{align*}
whenever $D\ge \frac{(b-\chi\mu)(1-\varepsilon)-\chi B_{\kappa,\tilde{\kappa}}}{A_{\kappa}}$, since $x>0$ for every $x\in O_{\kappa}$. Therefore, since the $x\mapsto e^{-\kappa x}-De^{-\tilde{\kappa}x}$ equals zero on the boundary of $O_{\kappa}$, then by comparison principle for parabolic equations, we deduce that
 $$
(1-\varepsilon)\left(e^{-\kappa x}-De^{-\tilde{\kappa}x}\right)\leq \Phi(t,x;u),\quad\forall\ u\in\mathcal{E}^{T,\varepsilon}_{\kappa,D},\ x\in\R, t\geq 0.
$$
In particular, it follows that
$$
(1-\varepsilon)\left(e^{-\kappa x}-De^{-\tilde{\kappa}x}\right)\leq \Phi(t,x;u^*)=u(t,x+ c_\kappa t;u_0),\quad\ x\in\R, t\geq 0.
$$
It is clear from comparison principle that
$$\limsup_{t\to\infty}\|\lambda \chi v(t,\cdot;u_0)\|_{\infty}\leq\limsup_{t\to\infty}\|\mu \chi u(t,\cdot;u_0)\|_{\infty}\leq \frac{\chi\mu a}{b-\chi\mu}$$
Hence, since $2\chi\mu<b$,  we can choose $t_{\varepsilon}\gg 1$ such that
$$
a-\chi\lambda\|v(t,\cdot,u_0)\|_{\infty}\geq a-\frac{\chi\mu (1+\varepsilon)}{b-\chi\mu}=\frac{a(b-2\chi\mu-\chi\mu\varepsilon)}{b-\chi\mu}>0, \quad \forall\ t\geq t_{\varepsilon}.
$$
Thus,
$$
\mathcal{A}_{u^*}(\delta)\geq (a-\chi\lambda \|v(t,\cdot;u_0)-(b-\chi\mu)\delta)>0, \quad \forall\ 0<\delta\ll \frac{a(b-2\chi\mu-\chi\mu\varepsilon)}{(b-\chi\mu)^2}, \ t\geq t_{\varepsilon}.
$$
Observer from \eqref{l-005} that
$$
\lim_{D\to\infty}\max\{\varphi_{\kappa}(x)-D\varphi_{\tilde{\kappa}}(x)\, |\, x\in\R\}=0.
$$
 Therefore, it follows from \eqref{l-004} and comparison principle for parabolic equations that
\begin{equation}\label{l-4}
(1-\varepsilon)\underline{U}_{\kappa,D}(x)\leq \Phi(t,x;u^*)=u(t,x+ c_{\kappa} t;u_0),\quad\ x\in\R, t\geq t_{\varepsilon}.
\end{equation}
Which combined with  \eqref{l-3} yields that
$$
(1-\varepsilon)\underline{U}_{\kappa,D}(x-c_{\kappa}t)\leq u(t,x;u_0)\leq(1+\varepsilon)\overline{U}_{\kappa,D}(x-c_{\kappa}t), \forall\ x\in\R, \forall\ t\ge t_{\varepsilon}.
$$
This implies that
$$
\sup_{x\geq ( c_{\kappa}+\tilde{\varepsilon})t}\left|\frac{u(t,x;u_0)}{e^{-\kappa(x-c_{\kappa}t)}}-1\right|\leq \varepsilon+ (1+\varepsilon)D\sup_{x\geq ( c_{\kappa}+\tilde{\varepsilon})t}e^{-(\tilde{\kappa}-\kappa)(x-c_{\kappa}t)}\leq \varepsilon+D(1+\varepsilon)e^{-(\tilde{\kappa}-\kappa)\tilde{\varepsilon} t}.
$$
So \eqref{Spreadind-speed-eq3} follows since $\varepsilon$ was arbitrary chosen.
\end{proof}

\section{Traveling wave solutions}

In this section we study the existence of traveling wave solutions and prove the following result, which is an application of the results established in the previous section and the theory developed in \cite{SaSh2}. In order to make use of the theory established in \cite{SaSh2}, we first set up the right framework which follows from the proof of Theorem \ref{Main-tm-2}.

For every $0<\kappa<\tilde{\kappa}<\min\{\sqrt{a},\sqrt{\lambda}\}$ with $\tilde{\kappa}<2\kappa$,  $D \ge 1$, define
$$
U^+_{\kappa}(x)=\min\{\frac{a}{b-\chi\mu},e^{-\kappa x}\}
$$
and
$$
U^{-}_{\kappa,D}(x)=\max\{0, e^{-\kappa x}-De^{-\tilde{\kappa}x}\}
$$
and consider the set
$$
\mathcal{E}_{\kappa}=\{u\in C^b_{\rm unif}(\R)\ :\ 0\leq u(x)\leq U^+_{\kappa}(x)\ \forall\ x\in\R\}.
$$
For every $u\in\mathcal{E}_{\kappa}$, consider the operator
$$
\mathcal{A}_{u,\kappa}(U)=U_{xx}+(c_{\kappa}-\chi\Psi_x(x;u))U_{x}+(a-\chi\lambda\Psi(x;u)-(b-\chi\mu)U)U
$$
where $\Psi(x;u)$ is given by \eqref{psi-definition}. Consider the function
$$
U(x;u)=\limsup_{t\to\infty}U(t,x;u),\quad \forall\ u\in\mathcal{E}_{\kappa},
$$
where $U(t,x;u)$ is the solution of the parabolic equation
\begin{equation}\label{U-def}
\begin{cases}
U_t=\mathcal{A}_{u,\kappa}(U),\quad\ t>0, x\in\R,\cr
U(0,x;u)=U^+_{\kappa}(x),
\end{cases}
\end{equation}

It follows from the arguments used in the proof of Theorem \ref{Main-tm-2} that $U^+_{\kappa}$ is supper-solution for \eqref{U-def}, hence comparison principle for parabolic equations imply that
$$
U(t_2,x;u)\leq U(t_1,x;u)\leq U^+_{\kappa}(x),\quad \forall x\in\R,\ 0<t_1<t_2, \ u\in\mathcal{E}_{\kappa}.
$$
Thus
\begin{equation}\label{U-main-def}
U(x;u)=\lim_{t\to\infty}U(t,x;u),\quad \forall\ u\in\mathcal{E}_{\kappa}.
\end{equation}
Moreover, using estimates for parabolic equation, one can show that $U(x;u)$ satisfies the elliptic equation
\begin{equation}\label{U-elliptic-eq}
0=U_{xx} +(c_{\kappa} -\chi\Psi_x(x;u))U_x+(a-\chi \lambda\Psi(x;u)-(b-\chi\mu)U)U.
\end{equation}
On the other hand, it follows also from the argument used in the proof of Theorem \ref{Main-tm-2} that there is $D\gg 1$ such that  $U^{-}_{\kappa,D}(x)$ is a subsolution of \eqref{U-def}. Whence,
\begin{equation}\label{lower-bound of U}
U^-_{\kappa,D}(x)\leq U(t,x;u),\quad \quad \forall x\in\R,\ t>0, \ u\in\mathcal{E}_{\kappa}.
\end{equation}
Hence,
\begin{equation}
U^-_{\kappa,D}(x)\leq U(x;u),\quad \forall\ x\in\R,\ \forall\ u\in\mathcal{E}_{\kappa}.
\end{equation}

With these setting we can now apply the theory developed in \cite{SaSh2}.

\begin{proof} [Proof of Theorem \ref{Existence of traveling}]

  (1) Consider the mapping  $U(\cdot;\cdot): \mathcal{E}_{\kappa}\ni u \mapsto U(x;u)\in\mathcal{E}_{\kappa}$ as defined by \eqref{U-main-def}. It follows from the arguments of the proof of \cite[Theorem  3.1 ]{SaSh2} that this function is continuous and compact in the compact open topology. Hence it has a fixed point $u^*$ by the Schauder's fixed point Theorem. Taking $v^*(x)=\Psi(x;u^*)$, we have from \eqref{U-elliptic-eq}, that $(u(x,t),v(x,t))=(u^*(x-c_{\kappa}t),v^*(x-c_{\kappa}t))$ is an entire solution of \eqref{Keller-Segel-eq}. Moreover, since $U^{-}_{\kappa,D}(x)\leq u^*(x)\leq U^+_{\kappa}(x)$ it follows that
$$
\lim_{x\to\infty}\frac{u^*(x)}{e^{-\kappa x}}=1.
$$
Note from \eqref{lower-bound of U} that $u^*(x)>0$ for every $x\in\R$. Therefore, it follows from \cite[Theorem 1.1]{FhCh} that

$$
\liminf_{t\to\infty}\inf_{|x|\leq \sqrt{a}t}u^*(x-ct)>0.
$$
Which implies that
$$\liminf_{x\to-\infty}u^*(x)>0, $$
this completes the proof of  \eqref{nnew-eq0}.

 Suppose now that $\chi\mu<\frac{b}{2}$. Let $(u(t,x),v(t,x))=(U(x-c_{\kappa}t),V(x-c_{\kappa}t))$ be a nontrivial traveling wave   established in the above.  We claim that
\begin{equation}\label{kk-0}
\lim_{x\to-\infty}|U(x)-\frac{a}{b}|=0.
\end{equation}
Suppose on the contrary that this is false. Then, there is $x_{n}\to-\infty$ such that
\begin{equation}\label{kkk-1}
\inf_{n\geq 1}|U(x_n)-\frac{a}{b}|>0.
\end{equation}
Consider the sequence of functions
$$
u^{n}(t,x)=u(t,x+x_n)\quad \text{and}\quad v^{n}(t,x)=v(t,x+x_n).
$$
By a priori estimate for parabolic equation, without loss of generality, we suppose that there is $(u^*(t,x),v^*(t,x))\in C^{1,2}(\R\times\R)$ such that $(u^{n},v^{n})(t,x)\to$  $(u^*(t,x),v^*(t,x))$ as $n\to\infty$. Furthermore, the function is an entire solution of \eqref{Keller-Segel-eq}. Note that
$$
0<\liminf_{x\to-\infty}U(x)\leq u^*(t,x)\leq \frac{a}{b-\chi\mu},\quad \forall\ x\in\R, \ t\in\R.
$$
Therefore, since $\chi\mu<\frac{b}{2}$, it follows from the stability of the constant positive $(\frac{a}{b},\frac{\mu a}{\lambda b})$ of \eqref{Keller-Segel-eq} (see \cite[Theorem 1.8]{SaSh1}) that $u^*(t,x)=\frac{a}{b}$ for every $x,t\in\R$. In particular, $u^*(0,0)=\frac{a}{b}$, which contradicts to \eqref{kkk-1}. Therefore, \eqref{kk-0} must hold.

\smallskip

(2)  Suppose that $\chi\mu<\frac{b}{2}.$
 For every $c_n>c^{**}$ with $c_n\to  c^{**}$, let $(U^{c_n}(x),V^{c_n}(x))$ be the traveling wave solution of \eqref{Keller-Segel-eq} connecting $(0,0)$ and $(\frac{a}{b},\frac{\mu a}{\lambda b})$ with speed $c_n$ given by Theorem \ref{Existence of traveling} (1). Note from the proof of existence of $(U^{c_n}(x),V^{c_n}(x))$ that
 $$
 U^{c_n}(x)\leq \min\{\frac{a}{b-\chi\mu},e^{-\kappa_n x}\},\quad\ x\in\R,  $$ where $\kappa_n=\frac{c_n-\sqrt{c_n^2-4 a}}{2}$. For each $n\geq 1$, note that the set $\{x\in\R\, |\, U^{c_n}(x)=\frac{a}{2b}\}$ is compact  and nonempty, so there is $x_n\in\R$ such that
 $$x_n=\min\{x\in\R\, |\, U^{c_n}(x)=\frac{a}{2b}\}.$$  Since $\|U^{c_n}\|_{\infty}<\frac{a}{b-\chi\mu}$ for every $n\geq1$, hence by estimates for parabolic equations, without loss of generality, we may suppose that $U^{c_n}(x+x_n)\to U^*(x)$ as $n\to\infty$ locally uniformly. Furthermore, taking $V^*=\Psi(x;U^*)$, it holds that $(U^*,V^*)$ solves
 \begin{equation}\label{kk-2}
 \begin{cases}
0=U^*_{xx}+( c^{**}-\chi V^*_x)U^*_x+U^*(a-\chi\lambda V^*-(b-\chi\mu)U^*),\quad x\in\R\cr
0=V^*_{xx}-\lambda V^*+\mu U^*,\quad x\in\R,
\end{cases}
 \end{equation}
 $U^*(0)=\frac{a}{2b}$ and $U^*(x)\geq \frac{a}{2b}$ for every $x\leq 0$. Next we claim that
 \begin{equation}\label{kk-3}
\overline{U}^*(\infty):= \limsup_{x\to\infty}U^*(x)=0.
 \end{equation}
 Suppose on the contrary that \eqref{kk-3} does not hold. Then there is a sequence $\{x_n\}_{n\geq1}$ such that $x_n<x_{n+1}$ for every $n$, with $x_1=0$,  $x_n\to\infty$ and
 \begin{equation*}
 U^*(x_n)\geq \frac{\overline{U}^*(\infty)}{2}>0,\quad\forall n\geq 1.
 \end{equation*}
 For every $n\geq 1$ let $\{y_n\}_{n\geq1}$ be the sequence defined by
 $$
U^*(y_n)=\min\{U^*(x)\, |\, x_n\leq x\leq x_{n+1}\}.
 $$
 It is clear that
 $$
\lim_{n\to\infty}U^*(y_n)=\inf_{x\in\R}U^*(x).
 $$
 Since $(U^*(x- c^{**}t),V^*(x- c^{**}t))$ is a positive entire solution of \eqref{Keller-Segel-eq} with
 $$
\liminf_{x\to-\infty}U^*(x) \geq \frac{a}{2b}\quad \text{and} \quad U^*(0)\neq \frac{a}{b},
 $$
 then by the stability of the constant equilibrium $(\frac{a}{b},\frac{\mu a}{\lambda b})$ (see \cite[Theorem 1.8]{SaSh1}), we obtain that
 $$
0=\inf_{x\in\R}U^*(x)=\lim_{n\to\infty}U^*(y_n).
 $$
 Therefore, without loss of generality, we may suppose that $y_n\in(x_n,x_{n+1})$ for every $n\geq 1$ with
 $$
\frac{d^2}{dx^2}U^*(y_n)\geq 0 \quad \text{and}\quad  \frac{d}{dx}U^*(y_n)=0, \quad \forall\ n\geq1.
 $$
 Note from \eqref{E2-1} that we also have that
 $$
\lim_{n\to\infty}V^*(y_n)=0.
 $$
 Thus for $n$ large enough, we have that
 $$
 U_{xx}^*(y_n)-(c-\chi V^*(y_n))U^*_x(y_n)+U^*(y_n)(a-\chi\lambda V^*(y_n)-(b-\chi\mu)U^*(y_n))>0
 $$
 which contradicts to \eqref{kk-2}. Therefore, \eqref{kk-3} holds.

 It follows again from
 $$
\liminf_{x\to-\infty}U^*(x) \geq \frac{a}{2b} $$  and the stability of the constant equilibrium $(\frac{a}{b},\frac{\mu a}{\lambda b})$ that
$$ \lim_{x\to-\infty}U^*(x)=\frac{a}{b}.$$
 Therefore $(u(t,x),v(t,x))=(U^*(x-c^{**}t),V^*(x-c^{**}t))$ is a traveling wave solution of \eqref{Keller-Segel-eq} with speed $c^{**}$ connecting $(0,0)$ and $(\frac{a}{b},\frac{\mu a}{\lambda b})$.

\smallskip

(3)  Let $(u(t,x),v(t,x))=(U(x-ct),V(x-ct))$ be a nontrivial  traveling wave solution of \eqref{Keller-Segel-eq} with speed $c$.  Observing in the proof of Theorem \ref{Main-tm-2}(1), by taking $\underline{U}^{\delta}$ to be a  traveling wave solution of \eqref{Z-eq1} with speed $c_{\kappa}=2\sqrt{a-\varepsilon_R M-\delta}$ connecting $0$ and $\frac{a-\varepsilon_R M-\delta}{b-\chi\mu}$ and chose $N\gg 1$ such that $2\sqrt{a-\varepsilon_R M-\delta}+N>c$, so that
$$
\lim_{t\to\infty}\sup_{x\geq (2\sqrt{a-\varepsilon_RM-\delta}+N)t}u(t,x)=0,
$$ it follows from the arguments used there, that  for every $0<\varepsilon\ll 1$,
$$
0<\liminf_{t\to\infty}\inf_{x\leq (2\sqrt{a}-2\varepsilon)t}u(t,x)=\liminf_{t\to\infty}\inf_{x\leq (2\sqrt{a}-2\varepsilon)t}U(x-ct)\leq \liminf_{t\to\infty}U((2\sqrt{a}-2\varepsilon-c) t).
$$

Hence, since $U(\infty)=0$, we must have that $2\sqrt{a}- 2 \varepsilon\leq c$ for every $0<\varepsilon\ll 1$. Letting $\varepsilon \to 0$, we obtain that $c\ge 2\sqrt{a}$.
\end{proof}

\end{document}